\newcommand{\R}{\mathbb R}
\newcommand{\N}{\mathbb N}
\newcommand{\PS}{\mathscr{P}\kern-0.15em \mathscr{S}}
\newcommand{\D}{\mathsf{I}\kern-0.10em \mathsf{D}}
\newtheorem{deff}{Definition}[section]
\newtheorem{thm}{Theorem}[section]
\newtheorem{prop}{Proposition}[section]
\newtheorem{lem}{Lemma}[section]
\newtheorem{cor}{Corollary}[section]
\DeclareMathOperator{\supp}{supp} 
\begin{document}

\baselineskip=17pt
	

\title{Measurability of Multifractal Topological Entropy and Its Role in Multifractal Theory}

\author{Tingting Wang$^{1}$, Bilel Selmi$^{2}$ \& Zhiming Li*$^{1}$\\
\\
		\small 1 School of Mathematics, Northwest University,\\
            \small Xi'an, Shaanxi, 710127, P.R. China\\
            \small {\bf T. Wang:}  wtingting@stumail.nwu.edu.cn\\
            \small {\bf Z. Li:} china-lizhiming@163.com\\
            \small 2 Department of Mathematics,  University of Monastir, \\
            \small 5000-Monastir, Tunisia\\
            \small  {\bf B. Selmi:} bilel.selmi@fsm.rnu.tn  \\   
            \small \& bilel.selmi@isetgb.rnu.tn}
	
\date{}
	
\maketitle
	
	
\renewcommand{\thefootnote}{}
	
\footnote{2020 \emph{Mathematics Subject Classification}:   37D35, 49Q15}
	
\footnote{\emph{Key words and phrases}: Bowen topological entropy, Packing topological entropy, Multifractal Analysis, Measurability}
	
\footnote{*corresponding author}
	
\renewcommand{\thefootnote}{\arabic{footnote}}
\setcounter{footnote}{0}
	
	
\begin{abstract}
In this paper, we consider definitions including $(q, \vartheta)$-Bowen topological entropy and $(q, \vartheta)$-packing topological entropy. We systematically explore their properties and measurability and analyze the relationship between $(q, \vartheta)$-packing topological entropy and topological entropy on level sets. Furthermore, the study demonstrates that the domain of $(q, \vartheta)$-packing topological entropy encompasses the domain of the multifractal spectrum of local entropies, providing new perspectives and tools for multifractal analysis.
\end{abstract}

\section{Introduction}
Multifractal measures, which serve as measure-theoretical counterparts to fractal sets, exhibit distributions with highly variable intensities. Over the past four decades, these measures have been the subject of significant research. Multifractal analysis is essential in both fractal geometry and dynamical systems, making its development a fascinating and important area of study. The term "multifractal" was first introduced in \cite{UG} to describe the statistical properties of energy dissipation in turbulent fluids, initially conceptualized by Kolmogorov and further expanded in \cite{man1, man2}. The energy dissipation, modeled as a measure $\vartheta$, shows spatial irregularities. A natural question arises: what is the size of the level sets $X_{\vartheta}(\beta)$, where dissipation corresponds to a specific Hölder exponent? These level sets have a fractal structure, which justifies the use of the term "multifractal." The Hausdorff dimension provides a powerful tool to measure the size of these sets. When the measure $\vartheta$ supports the set, the dimension of $X_{\vartheta}(\beta)$ is often $\beta$, though this is not guaranteed. Herein lies the importance of multifractal analysis. Drawing parallels to thermodynamics, physicists proposed the relation:
\[
\operatorname{dim} \left( X_{\vartheta}(\beta) \right) = \tau_{\vartheta}^{*}(\beta) := \inf_{q \in \mathbb{R}} \left( \beta q + \tau_{\vartheta}(q) \right),
\]
where $\tau_{\vartheta}$ is a characteristic function of the measure $\vartheta$, and $\dim$ refers to the Hausdorff dimension \cite{HJKPS}. The challenge is to establish conditions on the measure $\vartheta$ that ensure this equality holds—ideally, conditions that are easy to verify. When this equality is achieved, the multifractal formalism is said to be valid. In 1992, the author of \cite{BMP} demonstrated that the multifractal formalism is valid for a wide class of measures using a grid-based approach. The key was centering a Gibbs measure around the dimension. Later, Olsen in \cite{Ol1} introduced a framework independent of grids, allowing for a more general application of multifractal analysis. Olsen’s work led to an intrinsic definition of the measure $\vartheta$ based on sums like:
\[
\Psi_\vartheta(q,t) = \sum_{i} \vartheta \left( B\left( x_{i}, r_{i} \right) \right)^{q} (2r_i)^{t}.
\]
Olsen employed the theory of Gibbs measures and large deviation principles to support the multifractal formalism. Although these assumptions are natural for measures associated with dynamical systems, they are less suited to pure geometric measure theory.
In 2002, Ben Nasr et al. \cite{Ben} introduced additional geometric conditions, broadening the applicability of the formalism beyond the need for Gibbs measures. This advancement was crucial for non-dynamical settings, as seen in examples of measures without a Gibbs measure. Their analysis refined the conditions to be nearly necessary and sufficient for Olsen’s formalism. However, many measures defy the multifractal formalism, as shown in works such as \cite{Achour2024a,  AP, BeN, Ben2P, CM, Da1, Da2, EM, HLW, LN, LV, LiSe, MR, Ol1, Ol2, O13,  Pes, Pey,   Ra, Samti, SBSB33, She, Ta, W1, W3, W4, YZ}. This raises two key questions for measure theorists: what are the necessary and sufficient conditions for the multifractal formalism to hold, and what conclusions can be drawn in cases where the traditional formalism fails? To address these issues, a new multifractal formalism based on Hewitt-Stromberg measures was proposed in \cite{NB2, BD22, SBBS}. Unlike general Hausdorff and packing measures, which are defined using coverings and packings of sets with diameters less than a given positive value, Hewitt-Stromberg measures use packings of balls with a fixed diameter $r$. This formalism offers a new perspective on tackling multifractal analysis in more general contexts.

Classical multifractal analysis primarily addresses static systems, focusing on invariant measures on fractals or functions with predetermined singularities. Its central goal is to characterize the multifractal spectrum, which encapsulates the dimensions of the level sets defined by local scaling exponents. This framework finds applications across diverse fields, including physics, geophysics, finance, and image analysis. In contrast, the dynamical version of multifractal analysis broadens this perspective to encompass dynamical systems. Instead of static measures or functions, it investigates the evolution of orbits under a given dynamical rule. This approach explores how the local scaling properties of measures change over iterations, connecting multifractal behavior to ergodic theory, thermodynamic formalism, and dimension theory. For example, in the case of expanding maps, the multifractal spectrum is closely tied to key dynamical invariants such as Lyapunov exponents and entropy, offering a deeper insight into the system's complexity. Our main proposal for this paper is to study the dynamical version of Olsen's multifractal formalism.

The primary focus of multifractal analysis lies in characterizing the local singularities of measures. Conventionally, this area has been centered on analyzing the local dimensions of a Borel measure $\vartheta$, provided that the following limit exists:
$$
d_{\vartheta}(x)=\lim _{\varepsilon \rightarrow 0} \frac{\log \vartheta({B}_{\varepsilon}(x))}{\log \varepsilon},
$$
where, ${B}_{\varepsilon}(x)$ denotes an open $\varepsilon$-neighborhood around $x$. The goal is to identify sets of points that share a specific pointwise dimension. To facilitate this, the multifractal spectrum $f(\beta)$ was defined as:
$$
f(\beta)=\operatorname{dim}_{\mathrm{H}}\left(\left\{x \ \vert \ d_{\vartheta}(x)=\beta\right\}\right),
$$
where $\operatorname{dim}_{H}$ refers to the Hausdorff dimension.

This framework can be further generalized \cite{LBYP, KFF}. When considering a local (pointwise) property of a measure or a dynamical system, it can be expressed as a function $ F: Y \to \mathbb{R} $, where $ F $ serves as a substitute for $ d_{\vartheta} $. As a result, the state space $ Y $ can be decomposed into the level sets of $ F $:

$$
Y = \bigcup_{\beta \in \mathbb{R}} L_{\beta}^{F}, \quad  L_{\beta}^{F} = \{x \in Y : F(x) = \beta\}.
$$

A multifractal spectrum $ \mathcal{E} $ is defined as a function that quantifies the "sizes" of these level sets:  
$$
\mathcal{E}(\beta) = \mathcal{F}\left(L_{\beta}^{F}\right), \quad \beta \in \mathbb{R},
$$  
where $ \mathcal{F} $ is a set function applicable to subsets of $ Y $ and satisfies the property $ \mathcal{F}\left(Z_{1}\right) \leq \mathcal{F}\left(Z_{2}\right) $ whenever $ Z_{1} \subseteq Z_{2} $. By specifying \((F, \mathcal{F})\), this spectrum generalizes classical results, capturing properties such as local dimensions, entropies, and Lyapunov exponents. In this framework, $ \mathcal{F} $ plays a role analogous to the Hausdorff dimension.  Thus, the multifractal spectrum is a real-valued function on $ \mathbb{R} $ that depends on the pair $ (F, \mathcal{F}) $. It specifically addresses local (pointwise) dimensions and the Hausdorff dimension, hence it is often referred to as the multifractal spectrum for local dimensions.

From the viewpoint of dynamical systems, various local attributes are relevant, including local dimension, local entropies, and Lyapunov exponents. Different set functions can be employed, such as the Hausdorff dimension, packing dimension, and topological entropy. These multifractal spectra reflect various aspects of dynamical systems (e.g., chaotic behavior and sensitivity to initial conditions). Additionally, these spectra are invariant under smooth conjugacies and even under homeomorphisms with bounded distortion, establishing a strong connection to the concept of multifractal rigidity introduced in \cite{LBYP}.
In this paper, we consider a topological dynamical system $ (Y, f) $, where $ Y $ is a compact metric space, $ f: Y \to Y $ is a continuous map preserving the Borel probability measure $ \vartheta $. The set $ \mathscr{K}(Y) $ denotes the collection of all non-empty compact subsets of $ Y $, equipped with the Hausdorff metric, while $ \mathscr{M}(Y) $ represents the set of all Borel probability measures on $ Y $. we investigate the multifractal spectrum for local entropies and the $(q, \vartheta)$-packing topological entropy of non-compact sets.  The local entropy at a point $ x $ is defined as:  
$$
\mathscr{E}_{\vartheta}(f, x) = \lim_{\varepsilon \to 0} \lim_{n \to \infty} -\frac{1}{n} \log \vartheta\left(B^n_{\varepsilon}(x)\right),
$$  
where  
$$
B^n_{\varepsilon}(x) = \Big\{y \in Y : d\left(f^{i}(x), f^{i}(y)\right) < \varepsilon \ \text{for} \ i = 0, \ldots, n-1\Big\}, \quad \varepsilon > 0.
$$  

The multifractal spectrum for local entropies and the $ (q, \vartheta) $-packing topological entropy of non-compact sets is then given by:  
$$
\mathcal{E}(\beta) = \mathscr{E}_{top}^{P}\left(f, \{x \in Y \mid \mathscr{E}_{\vartheta}(f, x) = \beta\}\right),
$$  
where $ \mathscr{E}_{top}^{P}(f, Z) $ denotes the packing topological entropy of the set $ Z $. We interpret $ \mathscr{E}_{top}^{P}(f, Z) $ as a dynamical analogue of the packing dimension and explore its potential for yielding similar insights.

Olsen in \cite{OLMEA} investigated the measurability of the multifractal Hausdorff measure mapping and the multifractal packing measure mapping for a Radon measure $ \vartheta $ on the metric space $ Y $, and for $ q, t \in \mathbb{R} $. Additionally, he examined the measurability of these mappings under the $ \sigma $-algebra generated by analytic sets. These results can be interpreted as multifractal extensions of the findings in \cite{MM}. Recently, other results have been studied as \cite{ZDBSHZ, BSHZ, BSHZ1}.  Motivated by this, we explore similar results within the dynamical system $ (Y, f) $. For each $ q \in \mathbb{R} $, and using $ \mathscr{B}_{\varepsilon, \vartheta}^{q, t} $ and $ \mathscr{P}_{\varepsilon, \vartheta}^{q, t} $, we define, analogously to the Bowen topological entropy and the packing topological entropy, a $ (q, \vartheta) $-Bowen topological entropy $ \mathscr{E}_{\vartheta, q}^{B}(f, E, \varepsilon) $ and a $ (q, \vartheta) $-packing topological entropy $ \mathscr{E}_{\vartheta, q}^{P}(f, E, \varepsilon) $ for subsets $ E \subseteq Y $. The details of these definitions will be provided in the next section.  It is natural to examine the "smoothness" of the multifractal decomposition facilitated by the formalism in \cite{Ol1, Pes, Pey}. Write:  
$$
\Xi = \left(\mathscr{K}(Y) \times \mathscr{M}(Y) \times (-\infty, 0]\right) \cup \left(\mathscr{K}(Y) \times \mathscr{M}_{0}(Y) \times \mathbb{R}\right),
$$  
where $$\mathscr{M}_{0}(Y)=\Bigl\{\vartheta \in \mathscr{M}(Y)\ \vert \ \vartheta \ \text{satisfies the entropy doubling condition} \Bigl\}.$$
We approach this by analyzing the descriptive set-theoretic complexity of the mappings:
\begin{align}\label{eqeq1}
\mathscr{K}\left(Y\right) \times \mathscr{M}\left(Y\right) \times \mathbb{R} \rightarrow \overline{\mathbb{R}}:(K, \vartheta, q) \mapsto \mathscr{B}_{\varepsilon, \vartheta}^{q, t}(K), 
\end{align}
\begin{align}\label{eqeq2}
\mathscr{K}\left(Y\right) \times \mathscr{M}\left(Y\right) \times \mathbb{R} \rightarrow \overline{\mathbb{R}}:(K, \vartheta, q) \mapsto \mathscr{P}_{\varepsilon, \vartheta}^{q, t}(K),    
\end{align}
\begin{align}\label{eqeq3}
\Xi \rightarrow \overline{\mathbb{R}}:(K, \vartheta, q) \mapsto \mathscr{E}_{\vartheta, q}^{B}(f, K, \varepsilon),    
\end{align}
\begin{align}\label{eqeq4}
\Xi\rightarrow \overline{\mathbb{R}}:(K, \vartheta, q) \mapsto \mathscr{E}_{\vartheta, q}^{P}(f, K, \varepsilon).
\end{align}

We now briefly outline the structure of this paper. In Section $\ref{Section2}$, we introduce the main objects of study, including the definitions of $(q, \vartheta)$-Bowen topological entropy and $(q, \vartheta)$-packing topological entropy, along with other preliminary results.   Section $\ref{Section3}$ focuses on the properties and measurability of $(q, \vartheta)$-Bowen topological entropy. Section $\ref{Section4}$ examines the properties and measurability of $(q, \vartheta)$-packing topological entropy.   In Section $\ref{Section5}$, we investigate the relationship between $(q, \vartheta)$-packing topological entropy and topological entropy on level sets. Finally, Section $\ref{Section6}$ studies the domain of $(q, \vartheta)$-packing topological entropy and demonstrates that it encompasses the domain of the multifractal spectrum of local entropies.

\section{Preliminaries}\label{Section2}
Let $(Y, \rho)$ be a compact metric space, and $f: Y \longrightarrow Y$ be a continuous transformation. We consider $\mathscr{K}(Y)$ to denote the set of all non-empty compact subsets of $Y$, $\mathscr{M}(Y)$ denotes the set of all Borel probability measures on $Y$. For any $n\in\N,$ $x, y\in Y$ and $\varepsilon>0,$ we define the Bowen metric $\rho_n$, the Bowen ball $B^n_{\varepsilon}(x)$ and the closed Bowen ball $\overline{B}^n_x(\varepsilon)$ as
$$\rho_n(x,y)=\max\Big\{\rho(f^i(x), f^i(y)) \ \vert \ i=0, 1,\dots, n-1\Big\},$$
$$
B^n_{\varepsilon}(x)=\Big\{y\in Y \ \vert \ \rho_n(x,y)<\varepsilon\Big\}
\quad\text{and}\quad
\overline{B}^{n}_{\varepsilon}(x):=\Big\{y \in Y\; \vert\; \rho_n(x, y) \leq \varepsilon\Big\} .
$$
In this part, we will present the definitions of measure-theoretic entropy and topological entropy of a TDS $(Y, f)$, as well as some theorems that are extensively utilized in this paper.
For $\nu \in \mathscr{M}(Y),$ $\varepsilon>0$ and every $x\in Y,$ we put
\begin{align*}
\overline{\mathscr{E}}_{\nu}(f, x)=\lim _{\varepsilon \rightarrow 0} \limsup _{n \rightarrow+\infty}\vartheta_\nu^n(x,\varepsilon),\quad
\underline{\mathscr{E}}_{\nu}(f, x)=\lim _{\varepsilon \rightarrow 0} \liminf _{n \rightarrow+\infty}\vartheta_\nu^n(x,\varepsilon),
\end{align*}
where
$$
\vartheta_\nu^n(x,\varepsilon)=-\frac{1}{n} \log \nu\left(B^n_{\varepsilon}(x)\right).$$
\begin{deff}\label{def2.3}
For $\nu \in \mathscr{M}(Y),$ we consider
\begin{align*}
\overline{\mathscr{E}}_{\nu}(f)=\int \overline{\mathscr{E}}_{\nu}(f, x) d\nu(x)\quad\text{and}\quad
\underline{\mathscr{E}}_{\nu}(f)=\int \underline{\mathscr{E}}_{\nu}(f, x) d\nu(x).
\end{align*}
Then $\overline{\mathscr{E}}_{\nu}(f)$ and $\underline{\mathscr{E}}_{\nu}(f)$ are respectively called the upper and lower measure-theoretical entropies of $\nu$,
$\overline{\mathscr{E}}_{\nu}(f, x)$ and $\underline{\mathscr{E}}_{\nu}(f, x)$ are respectively called the upper and lower pre-measure theoretical entropies of $\nu$ at $x\in Y$.
\end{deff}
\begin{deff}
Let $(Y, f)$ be a TDS. For any non-empty subset $A\subseteq Y$, given $t \geq 0$, $N \in \mathbb{N}$, and $\varepsilon > 0$, we consider
\begin{enumerate}
\item  A collection $\left\{B^{n_{i}}_{\varepsilon}(x_{i})\right\}_{i\in I}$ is designated as a $(N, \varepsilon)$-covering of set $A,$ if it consists of countable or finite Bowen balls that cover $A$. i.e.,
$$  x_{i} \in Y, n_{i} \geq N \ \text{such that} \ A \subseteq \bigcup_{i} B^{n_{i}}_{\varepsilon}(x_{i}).$$
Furthermore, if $x_{i} \in A,$ the collection $\left\{B^{n_{i}}_{\varepsilon}(x_{i})\right\}_{i\in I}$ is called a centered $(N, \varepsilon)$-covering of set $A.$

\item A collection $\left\{\overline{B}^{n_{i}}_{\varepsilon}(x_{i})\right\}_{i\in I}$ is considered a centered $(N, \varepsilon)$-packing of set $A,$ if it comprises non-overlapping closed Bowen balls. Furthermore, the centers of these closed Bowen balls are located within the confines of $A$, i.e.,
$$\overline{B}^{n_{i}}_{\varepsilon}(x_{i})\cap \overline{B}^{n_{j}}_{\varepsilon}\left(x_{j}\right)=\emptyset \quad \text{for} \quad  i\neq j; \quad \text{and} \quad x_i\in A,\ n_{i} \geq N \ \text{for any}\ i.$$
\end{enumerate}
\end{deff}

For any non-empty subset $A\subseteq Y$, given $t \geq 0$, $N \in \mathbb{N}$, and $\varepsilon > 0$, we define the quantity $\mathscr{B}_{N, \varepsilon}^{t}(A)$ as
$$
\mathscr{B}_{N, \varepsilon}^{t}(A)=\inf\sum_{i\in I} e^{-t n_{i}},
$$
where the infimum is taken over all centered $(N, \varepsilon)$-covering of set $A.$
It is clearly seen that as $N$ increases and $\varepsilon$ decreases, the quantity $\mathscr{B}_{N, \varepsilon}^{t}(A)$ does not decrease. Consequently, the limits
$$
\mathscr{B}_{\varepsilon}^{t}(A)=\lim_{N \to \infty} \mathscr{B}_{N, \varepsilon}^{t}(A)
\quad\text{and}\quad
\mathscr{B}^{t}(A)=\lim_{\varepsilon \to 0} \mathscr{B}_{\varepsilon}^{t}(A)
$$
both exist.
\begin{deff}\cite{FDJHW}
The Bowen topological entropy $\mathscr{E}_{top}^{B}(f, A)$ can be equivalently defined as
$$
\mathscr{B}^{t}(A)= \begin{cases}0, & t>\mathscr{E}_{top}^{B}(f, A) \\\\ \infty, & t<\mathscr{E}_{top}^{B}(f, A).\end{cases}
$$
\end{deff}

Next, we consider the packing topological entropy. For any non-empty subset $A$ of $Y$, and given $t \geq 0$, $N \in \mathbb{N}$, and $\varepsilon > 0$, we define the quantity $P_{N, \varepsilon}^{t}(A)$ as follows
$$
P_{N, \varepsilon}^{t}(A)=\sup \sum_{i\in I} e^{-t n_{i}},
$$
where the supremum is taken over all centered $(N, \varepsilon)$-packing of set $A.$
It is obvious that as $N$ and $\varepsilon$ decreases, the $P_{N, \varepsilon}^{t}(A)$ does not decrease, so the limit
$$
P_{\varepsilon}^{t}(A)=\lim _{N \rightarrow \infty} P_{N, \varepsilon}^{t}(A)
$$
exists. Now, we put
$$
\mathscr{P}_{\varepsilon}^{t}(A)=\inf \left\{\sum_{i=1}^{\infty} P_{\varepsilon}^{t}\left(A_{i}\right) \ \Big| \ A\subseteq \bigcup_{i=1}^{\infty} A_{i} \right\}.
$$
Clearly, if $A \subseteq \bigcup_{i=1}^{\infty} A_{i}$, we have $$\mathscr{P}_{\varepsilon}^{t}(A) \leq \sum_{i=1}^{\infty} \mathscr{P}_{\varepsilon}^{t}\left(A_{i}\right).$$ There exists a critical value for the parameter $t$, denoted as $\mathscr{E}_{top}^{P}(f, A, \varepsilon)$. At this critical value, the function $\mathscr{P}_{\varepsilon}^{t}(A)$ shifts from infinity to zero, meaning
$$
\mathscr{P}_{\varepsilon}^{t}(A)= \begin{cases}0, & t>\mathscr{E}_{top}^{P}(f, A, \varepsilon) \\ \\ \infty, & t<\mathscr{E}_{top}^{P}(f, A, \varepsilon).\end{cases}
$$
Note that for every $ \varepsilon_1, \varepsilon_2>0,$ if $\varepsilon_1<\varepsilon_2,$ then
$$\mathscr{E}_{top}^{P}(f, A, \varepsilon_1)>\mathscr{E}_{top}^{P}(f, A, \varepsilon_2).$$
\begin{deff}\cite{FDJHW}
Consider $$
\mathscr{E}_{top}^{P}(f, A):=\lim _{\varepsilon \rightarrow 0} \mathscr{E}_{top}^{P}(f, A, \varepsilon),
$$
we call $\mathscr{E}_{top}^{P}(f, A)$ the packing topological entropy of $A.$
\end{deff}

\subsection{$(q, \vartheta)$-Bowen topological entropy and $(q, \vartheta)$-packing topological entropy}
For $s\in \R,$ we put $\Psi_{s}: [0, \infty)\rightarrow  [0, \infty]$ by 
$$
\begin{aligned}
\Psi_{s}(x)&= &\begin{cases}\begin{cases}\infty & \text { if } x=0 \\
x^s & \text { if } x>0\end{cases}  & \text{for}\ s<0, \\ \\
1   &\text{for}\ s=0,\\\\
x^s &\text{for}\ s>0.
\end{cases}
\end{aligned}
$$
For $\vartheta \in \mathscr{M}(Y),$ $ E \subseteq Y, q, t \in \mathbb{R},$ $N \in \mathbb{N}$ and $\varepsilon>0$ put
$$
\overline{\mathscr{B}}_{N, \varepsilon, \vartheta}^{q, t}(A)=\inf\sum_{i\in I} \Psi_{q}\left(\vartheta\left(B^{n_i}_{\varepsilon}(x_{i})\right)\right)e^{-t n_{i}},
$$
where the infimum is taken over all centered $(N, \varepsilon)$-covering of set $A.$
$$
\overline{\mathscr{B}}_{\varepsilon, \vartheta}^{q, t}(A)=\lim_{N \to \infty} \overline{\mathscr{B}}_{N, \varepsilon, \vartheta}^{q, t}(A)
$$
and
$$
\mathscr{B}_{\varepsilon, \vartheta}^{q, t}(A)=\sup_{B\subseteq A} \overline{\mathscr{B}}_{\varepsilon, \vartheta}^{q, t}(B).
$$
We also make the dual definitions
$$
\overline{\mathscr{P}}_{N, \varepsilon, \vartheta}^{q, t}(A)=\sup\sum_{i\in I} \Psi_{q}\left(\vartheta\left(B^{n_i}_{\varepsilon}(x_{i})\right)\right)e^{-t n_{i}},
$$
where the supremum is taken over all centered $(N, \varepsilon)$-packing of $A.$
$$
\overline{\mathscr{P}}_{\varepsilon, \vartheta}^{q, t}(A)=\lim_{N \to \infty} \overline{\mathscr{P}}_{N, \varepsilon, \vartheta}^{q, t}(A)
$$
and
$$
\mathscr{P}_{\varepsilon, \vartheta}^{q, t}(A)=\inf_{A\subseteq \bigcup_{i}A_i} \sum_i \overline{\mathscr{P}}_{\varepsilon, \vartheta}^{q, t}(A_i).
$$

\begin{prop}
Let $\vartheta \in \mathscr{M}(Y),$ $ E \subseteq Y, q, t \in \mathbb{R}$ and $\varepsilon>0.$
There exist unique extended real valued numbers $\mathscr{E}_{\vartheta, q}^{B}(f, E, \varepsilon) \in[-\infty, \infty]$, ${\Delta}_{\vartheta, q}^{P}(f, E, \varepsilon)\in[-\infty, \infty]$ and $\mathscr{E}_{\vartheta, q}^{P}(f, E, \varepsilon) \in[-\infty, \infty]$ such that

$$
\begin{aligned}
& \mathscr{B}_{\varepsilon, \vartheta}^{q, t}(E)= \begin{cases}\infty & \text { if } t<\mathscr{E}_{\vartheta, q}^{B}(f, E, \varepsilon),\\ \\
0 & \text { if } \mathscr{E}_{\vartheta, q}^{B}(f, E, \varepsilon)<t.\end{cases} \\
& \overline{\mathscr{P}}_{\varepsilon, \vartheta}^{q, t}(E)= \begin{cases}\infty & \text { if } t<{\Delta}_{\vartheta, q}^{P}(f, E, \varepsilon),\\ \\
0 & \text { if } {\Delta}_{\vartheta, q}^{P}(f, E, \varepsilon)<t.\end{cases}\\
& \mathscr{P}_{\varepsilon, \vartheta}^{q, t}(E)= \begin{cases}\infty & \text { if } t<\mathscr{E}_{\vartheta, q}^{P}(f, E, \varepsilon),\\ \\
0 & \text { if } \mathscr{E}_{\vartheta, q}^{P}(f, E, \varepsilon)<t.\end{cases}
\end{aligned}
$$    
\end{prop}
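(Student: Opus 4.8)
The plan is to establish the ``jump'' behavior of each of the three set functions $t \mapsto \mathscr{B}_{\varepsilon, \vartheta}^{q, t}(E)$, $t \mapsto \overline{\mathscr{P}}_{\varepsilon, \vartheta}^{q, t}(E)$, and $t \mapsto \mathscr{P}_{\varepsilon, \vartheta}^{q, t}(E)$ by isolating a single monotonicity-and-scaling lemma and then applying it three times. The key observation is that in each pre-measure defining sum, the factor $e^{-t n_i}$ depends on $t$ in a uniformly controllable way: if a centered $(N, \varepsilon)$-covering (resp. packing) has all $n_i \geq N$, then for $t' > t$ we have $e^{-t' n_i} = e^{-(t'-t)n_i} e^{-t n_i} \leq e^{-(t'-t)N} e^{-t n_i}$, so that
\begin{align*}
\overline{\mathscr{B}}_{N, \varepsilon, \vartheta}^{q, t'}(A) \leq e^{-(t'-t)N}\,\overline{\mathscr{B}}_{N, \varepsilon, \vartheta}^{q, t}(A),
\qquad
\overline{\mathscr{P}}_{N, \varepsilon, \vartheta}^{q, t'}(A) \leq e^{-(t'-t)N}\,\overline{\mathscr{P}}_{N, \varepsilon, \vartheta}^{q, t}(A),
\end{align*}
since the $\Psi_q$-factors are unchanged. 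This is the engine behind the dichotomy.

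First I would treat $\mathscr{B}_{\varepsilon, \vartheta}^{q, t}$. Fix $A \subseteq Y$ and suppose $\overline{\mathscr{B}}_{\varepsilon, \vartheta}^{q, t}(A) < \infty$ for some $t$. Taking $N \to \infty$ in the displayed inequality and using that $e^{-(t'-t)N} \to 0$ while $\overline{\mathscr{B}}_{N, \varepsilon, \vartheta}^{q, t}(A)$ is non-decreasing in $N$ with finite limit, one gets $\overline{\mathscr{B}}_{\varepsilon, \vartheta}^{q, t'}(A) = 0$ for every $t' > t$; hence $\overline{\mathscr{B}}_{\varepsilon, \vartheta}^{q, \cdot}(A)$ jumps from $\infty$ to $0$ at a well-defined threshold, which we may call $\overline{\mathscr{E}}_{\vartheta, q}^{B}(f, A, \varepsilon) = \inf\{ t : \overline{\mathscr{B}}_{\varepsilon, \vartheta}^{q, t}(A) = 0\}$ (with the convention $\inf \emptyset = +\infty$ and $\inf \R = -\infty$). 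One subtlety: one must check that $\overline{\mathscr{B}}_{N, \varepsilon, \vartheta}^{q, t}(A)$ is genuinely monotone decreasing in $t$, which follows at once since for $t' \geq t$ one has $e^{-t' n_i} \leq e^{-t n_i}$ termwise over any admissible covering. Then define $\mathscr{E}_{\vartheta, q}^{B}(f, E, \varepsilon) := \sup_{B \subseteq E} \overline{\mathscr{E}}_{\vartheta, q}^{B}(f, B, \varepsilon)$ and verify that, since $\mathscr{B}_{\varepsilon, \vartheta}^{q, t}(E) = \sup_{B \subseteq E} \overline{\mathscr{B}}_{\varepsilon, \vartheta}^{q, t}(B)$, the supremum is $\infty$ when $t$ is below the threshold for some $B$ and $0$ when $t$ exceeds all thresholds, giving exactly the stated dichotomy for $\mathscr{B}_{\varepsilon, \vartheta}^{q, t}(E)$. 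The analogous argument with packings gives ${\Delta}_{\vartheta, q}^{P}(f, E, \varepsilon)$ directly from $\overline{\mathscr{P}}_{\varepsilon, \vartheta}^{q, t}(E)$ (no extra supremum is needed here, as $\overline{\mathscr{P}}$ is already defined on $E$ itself).

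For the third function $\mathscr{P}_{\varepsilon, \vartheta}^{q, t}(E) = \inf_{E \subseteq \bigcup_i A_i} \sum_i \overline{\mathscr{P}}_{\varepsilon, \vartheta}^{q, t}(A_i)$, I would argue as follows. If $t' > t$ and $\mathscr{P}_{\varepsilon, \vartheta}^{q, t}(E) < \infty$, pick a countable cover $E \subseteq \bigcup_i A_i$ with $\sum_i \overline{\mathscr{P}}_{\varepsilon, \vartheta}^{q, t}(A_i) < \infty$; then each $\overline{\mathscr{P}}_{\varepsilon, \vartheta}^{q, t}(A_i) < \infty$, so by the jump property for $\overline{\mathscr{P}}$ established just above we get $\overline{\mathscr{P}}_{\varepsilon, \vartheta}^{q, t'}(A_i) = 0$ for each $i$, whence $\mathscr{P}_{\varepsilon, \vartheta}^{q, t'}(E) \leq \sum_i \overline{\mathscr{P}}_{\varepsilon, \vartheta}^{q, t'}(A_i) = 0$. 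Therefore $\mathscr{P}_{\varepsilon, \vartheta}^{q, t}(E)$ also exhibits the jump, with threshold $\mathscr{E}_{\vartheta, q}^{P}(f, E, \varepsilon) := \inf\{ t : \mathscr{P}_{\varepsilon, \vartheta}^{q, t}(E) = 0\}$. Uniqueness of all three extended reals is immediate: the jump point of a $\{0,\infty\}$-valued non-increasing function is unique. The main obstacle — really the only place demanding care — is the bookkeeping when the pre-measures are identically $\infty$ or identically $0$ for all $t$ (forcing the threshold to be $\pm\infty$), and ensuring the sign of $q$ entering through $\Psi_q$ never interferes: since $\Psi_q(\vartheta(B^{n_i}_\varepsilon(x_i)))$ is a fixed nonnegative (possibly $+\infty$) factor independent of $t$, it is inert in all the above estimates, though one should note that if some such factor is $+\infty$ (which happens only when $q<0$ and the ball has zero measure) that term simply forces the sum to $+\infty$ and those coverings/packings are discarded in the infimum/contribute $+\infty$ to the supremum, consistent with the stated formulas.
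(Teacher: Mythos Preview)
The paper does not supply a proof of this proposition; it is stated and then immediately followed by the definition of the upper and lower $(q,\vartheta)$-entropies, the authors evidently regarding the existence of the critical parameters as a routine consequence of the Carath\'eodory-type construction (cf.\ Olsen \cite{Ol1}, Pesin \cite{Pes}). Your argument is correct and is exactly the standard one: the scaling inequality
\[
\overline{\mathscr{B}}_{N, \varepsilon, \vartheta}^{q, t'}(A) \leq e^{-(t'-t)N}\,\overline{\mathscr{B}}_{N, \varepsilon, \vartheta}^{q, t}(A)
\quad (t'>t)
\]
(and its packing analogue) is the right engine, and your propagation of the jump through the outer $\sup_{B\subseteq E}$ for $\mathscr{B}$ and through the outer $\inf_{E\subseteq\bigcup_i A_i}\sum_i$ for $\mathscr{P}$ is handled cleanly. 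The remark that the $\Psi_q$-factors are inert in $t$ (and may be $+\infty$ when $q<0$ without affecting the logic) is a useful clarification that the paper leaves implicit.
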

\begin{deff}
Let $\vartheta \in \mathscr{M}(Y),$ $ E \subseteq Y, q \in \mathbb{R}$ and $\varepsilon>0$. Consider 
$$
\overline{\mathscr{E}}_{\vartheta, q}^{B}(f, E):=\limsup_{\varepsilon \rightarrow 0} \mathscr{E}_{\vartheta, q}^{B}(f, E, \varepsilon),
$$
$$
\underline{\mathscr{E}}_{\vartheta, q}^{B}(f, E):=\liminf_{\varepsilon \rightarrow 0} \mathscr{E}_{\vartheta, q}^{B}(f, E, \varepsilon),
$$
$$
\overline{\mathscr{E}}_{\vartheta, q}^{P}(f, E):=\limsup_{\varepsilon \rightarrow 0} \mathscr{E}_{\vartheta, q}^{P}(f, E, \varepsilon)
$$
and
$$
\underline{\mathscr{E}}_{\vartheta, q}^{P}(f, E):=\liminf_{\varepsilon \rightarrow 0} \mathscr{E}_{\vartheta, q}^{P}(f, E, \varepsilon),
$$
we call $\overline{\mathscr{E}}_{\vartheta, q}^{B}(f, E),$ $\underline{\mathscr{E}}_{\vartheta, q}^{B}(f, E)$, $\overline{\mathscr{E}}_{\vartheta, q}^{P}(f, E)$ and $\underline{\mathscr{E}}_{\vartheta, q}^{P}(f, E)$ the upper (lower) $(q, \vartheta)$-Bowen topological entropy and upper (lower) $(q, \vartheta)$-packing topological entropy of $E.$
\end{deff}
Additionally, if $\overline{\mathscr{E}}_{\vartheta, q}^{B}(f, E)=\underline{\mathscr{E}}_{\vartheta, q}^{B}(f, E)$ or $\overline{\mathscr{E}}_{\vartheta, q}^{P}(f, E)=\underline{\mathscr{E}}_{\vartheta, q}^{P}(f, E),$ we let $\mathscr{E}_{\vartheta, q}^{B}(f, E)$ 
or
$\mathscr{E}_{\vartheta, q}^{P}(f, E)$ denote the common values respectively.
The number $\mathscr{E}_{\vartheta, q}^{B}(f, E)$ is an obvious multifractal analogue of the Bowen topological entropy $\mathscr{E}^{B}_{top}(f, E)$ of $E$ whereas $\mathscr{E}_{\vartheta, q}^{P}(f, E)$ is obvious multifractal analogues of the packing topological entropy $\mathscr{E}^{P}_{top}(f, E)$. In fact, it follows immediately from the definitions that
\begin{equation*}
\mathscr{E}^{B}_{top}(f, E)=\mathscr{E}_{\vartheta, 0}^{B}(f, E), \quad  \mathscr{E}_{top}^{P}(f, E, \varepsilon)=\mathscr{E}_{\vartheta, 0}^{P}(f, E, \varepsilon) .
\end{equation*}

\begin{deff}\cite[Definition 4.5]{FTEV}
Let $(Y, f)$ be a TDS. For $\vartheta \in \mathscr{M}(Y)$, we say that $\vartheta$ satisfies the entropy doubling condition if for every sufficiently small $\varepsilon>0$ and every $a>1$ 
\begin{equation*}
D_{\vartheta}(\varepsilon):=\sup _{n} \sup _{x} \frac{\vartheta\left(B^{n}_{a\varepsilon}(x)\right)}{\vartheta\left(B^{n}_{\varepsilon}(x)\right)}<\infty .
\end{equation*}
Then consider the following set $$\mathscr{M}_{0}(Y)=\Bigl\{\vartheta \in \mathscr{M}(Y)\ \vert \ \vartheta \ \text{satisfies the entropy doubling condition} \Bigl\}.$$
\end{deff}



\begin{prop}
Let $\vartheta \in \mathscr{M}(Y),$ $ E \subseteq Y, q, t \in \mathbb{R}$ and $\varepsilon>0$. Then
\begin{enumerate}
  \item $\mathscr{B}_{\varepsilon, \vartheta}^{q, t} \leq \mathscr{P}_{\varepsilon, \vartheta}^{q, t}$ for $\vartheta \in \mathscr{M}_{0}\left(Y\right)$, and $\mathscr{P}_{\varepsilon, \vartheta}^{q, t} \leq \overline{\mathscr{P}}_{\varepsilon, \vartheta}^{q, t}$ for $\vartheta \in \mathscr{M}\left(Y\right)$.
  \item $\mathscr{E}_{\vartheta, q}^{B}(f, E) \leq \mathscr{E}_{\vartheta, q}^{P}(f, E)$.
  \item $\mathscr{E}_{\vartheta, q}^{B}(f, E, \varepsilon)$ and $\mathscr{E}_{\vartheta, q}^{P}(f, E, \varepsilon)$ are monotone and $\sigma$-stable.  
\end{enumerate}
\end{prop}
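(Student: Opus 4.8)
\emph{Proof plan.} I would dispose first of the routine parts. The second inequality in Item 1, $\mathscr{P}_{\varepsilon, \vartheta}^{q, t}(A)\le\overline{\mathscr{P}}_{\varepsilon, \vartheta}^{q, t}(A)$, is immediate from the definition of $\mathscr{P}_{\varepsilon,\vartheta}^{q,t}$ as an infimum over countable decompositions: take $A_1=A$ and $A_i=\emptyset$ for $i\ge 2$. For the monotonicity in Item 3, if $E_1\subseteq E_2$ then $\mathscr{B}_{\varepsilon,\vartheta}^{q,t}(E_1)=\sup_{B\subseteq E_1}\overline{\mathscr{B}}_{\varepsilon,\vartheta}^{q,t}(B)\le\sup_{B\subseteq E_2}\overline{\mathscr{B}}_{\varepsilon,\vartheta}^{q,t}(B)=\mathscr{B}_{\varepsilon,\vartheta}^{q,t}(E_2)$, and likewise $\mathscr{P}_{\varepsilon,\vartheta}^{q,t}(E_1)\le\mathscr{P}_{\varepsilon,\vartheta}^{q,t}(E_2)$ since any countable cover of $E_2$ covers $E_1$; the monotonicity of the critical exponents then follows from the characterisation in the preceding Proposition (if $\mathscr{B}_{\varepsilon,\vartheta}^{q,t}(E_1)=\infty$ then $\mathscr{B}_{\varepsilon,\vartheta}^{q,t}(E_2)=\infty$, so $t\le\mathscr{E}_{\vartheta,q}^{B}(f,E_2,\varepsilon)$), and it passes to $\limsup$/$\liminf$ as $\varepsilon\to 0$. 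For $\sigma$-stability: $\mathscr{P}_{\varepsilon,\vartheta}^{q,t}$ is built from $\overline{\mathscr{P}}_{\varepsilon,\vartheta}^{q,t}$ by the usual Method I construction and is therefore countably subadditive; on the Bowen side, a countable union of centered $(N,\varepsilon)$-coverings of sets $B_n$ is a centered $(N,\varepsilon)$-covering of $\bigcup_n B_n$ (the centres still lie in $\bigcup_n B_n$), so $\overline{\mathscr{B}}_{N,\varepsilon,\vartheta}^{q,t}\bigl(\bigcup_n B_n\bigr)\le\sum_n\overline{\mathscr{B}}_{N,\varepsilon,\vartheta}^{q,t}(B_n)$, and letting $N\to\infty$ and taking the supremum over subsets shows $\mathscr{B}_{\varepsilon,\vartheta}^{q,t}$ is countably subadditive too. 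Monotonicity plus countable subadditivity give the $\sigma$-stability of $\mathscr{E}_{\vartheta,q}^{B}(f,\cdot,\varepsilon)$ and $\mathscr{E}_{\vartheta,q}^{P}(f,\cdot,\varepsilon)$ by the standard argument (if $t>\sup_n\mathscr{E}_{\vartheta,q}^{B}(f,E_n,\varepsilon)$ then every $\mathscr{B}_{\varepsilon,\vartheta}^{q,t}(E_n)=0$, hence $\mathscr{B}_{\varepsilon,\vartheta}^{q,t}(\bigcup_n E_n)=0$, hence $\mathscr{E}_{\vartheta,q}^{B}(f,\bigcup_n E_n,\varepsilon)\le t$).

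Next, the first inequality in Item 1, $\mathscr{B}_{\varepsilon,\vartheta}^{q,t}\le\mathscr{P}_{\varepsilon,\vartheta}^{q,t}$ for $\vartheta\in\mathscr{M}_{0}(Y)$. I would reduce this to the \emph{premeasure inequality} $\overline{\mathscr{B}}_{\varepsilon,\vartheta}^{q,t}(F)\le\overline{\mathscr{P}}_{\varepsilon,\vartheta}^{q,t}(F)$ for every $F\subseteq Y$. Indeed, for any cover $E\subseteq\bigcup_i A_i$, countable subadditivity of $\mathscr{B}_{\varepsilon,\vartheta}^{q,t}$ gives
$$
\mathscr{B}_{\varepsilon,\vartheta}^{q,t}(E)\le\sum_i\mathscr{B}_{\varepsilon,\vartheta}^{q,t}(A_i\cap E)=\sum_i\sup_{B\subseteq A_i\cap E}\overline{\mathscr{B}}_{\varepsilon,\vartheta}^{q,t}(B)\le\sum_i\sup_{B\subseteq A_i\cap E}\overline{\mathscr{P}}_{\varepsilon,\vartheta}^{q,t}(B)\le\sum_i\overline{\mathscr{P}}_{\varepsilon,\vartheta}^{q,t}(A_i),
$$
where the last step uses that $\overline{\mathscr{P}}_{\varepsilon,\vartheta}^{q,t}$ is monotone (a packing of a subset is a packing of the superset); taking the infimum over covers yields $\mathscr{B}_{\varepsilon,\vartheta}^{q,t}(E)\le\mathscr{P}_{\varepsilon,\vartheta}^{q,t}(E)$.

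For the premeasure inequality I would use a Vitali-type argument in the Bowen metric. From $F$ extract a maximal disjoint family of closed Bowen balls $\overline{B}^{n_i}_{\varepsilon}(x_i)$ with $x_i\in F$ and $n_i\ge N$, selecting balls of small time-index (the "large" balls) first; then, since $\overline{B}^{m}_{\varepsilon}(y)$ meeting $\overline{B}^{n}_{\varepsilon}(x)$ with $n\le m$ forces $\overline{B}^{m}_{\varepsilon}(y)\subseteq\overline{B}^{n}_{3\varepsilon}(x)$, the balls $\{B^{n_i}_{3\varepsilon}(x_i)\}_i$ form a centered $(N,3\varepsilon)$-covering of $F$ \emph{carrying exactly the same time-indices $n_i$}. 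Hence the factors $e^{-tn_i}$ cancel termwise against those of the packing, and for the remaining weights the entropy doubling condition gives $\Psi_q\bigl(\vartheta(B^{n_i}_{3\varepsilon}(x_i))\bigr)\le D_{\vartheta}(\varepsilon)^{\max\{q,0\}}\,\Psi_q\bigl(\vartheta(B^{n_i}_{\varepsilon}(x_i))\bigr)$ — note that for $q\le 0$ the monotonicity of $\Psi_q$ already gives this with constant $1$, so it is only the case $q>0$ that genuinely needs $\vartheta\in\mathscr{M}_{0}(Y)$. Summing, letting $N\to\infty$, and absorbing the $3$ by a bounded rescaling of the radius yields the premeasure inequality. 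Item 2 is then a formality: by the preceding Proposition, $t<\mathscr{E}_{\vartheta,q}^{B}(f,E,\varepsilon)$ forces $\mathscr{B}_{\varepsilon,\vartheta}^{q,t}(E)=\infty$, hence $\mathscr{P}_{\varepsilon,\vartheta}^{q,t}(E)=\infty$, hence $t\le\mathscr{E}_{\vartheta,q}^{P}(f,E,\varepsilon)$; so $\mathscr{E}_{\vartheta,q}^{B}(f,E,\varepsilon)\le\mathscr{E}_{\vartheta,q}^{P}(f,E,\varepsilon)$ for each $\varepsilon$, and taking $\limsup_{\varepsilon\to0}$ and $\liminf_{\varepsilon\to0}$ gives $\overline{\mathscr{E}}_{\vartheta,q}^{B}(f,E)\le\overline{\mathscr{E}}_{\vartheta,q}^{P}(f,E)$ and $\underline{\mathscr{E}}_{\vartheta,q}^{B}(f,E)\le\underline{\mathscr{E}}_{\vartheta,q}^{P}(f,E)$, whence the claim whenever the common values exist.

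The main obstacle is the Vitali extraction above: one must arrange the selection so that the covering inherits precisely the time-indices of the packing (so that the $e^{-tn_i}$ cancel irrespective of the sign of $t$), while keeping the unavoidable inflation of the Bowen radius bounded so that it is absorbed by the doubling constant $D_{\vartheta}(\varepsilon)$; squeezing out the exact constant, if one insists on the pointwise inequality of set functions rather than its critical-value consequences, requires an additional subdivision/limiting step. The case $q>0$ — where enlarging a Bowen ball enlarges its $\vartheta$-mass and hence its $\Psi_q$-weight — is exactly where the hypothesis $\vartheta\in\mathscr{M}_{0}(Y)$ is indispensable.
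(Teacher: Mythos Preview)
Your proof is correct and follows essentially the same approach as the paper: the core of Item~1 is a Vitali-type extraction (you do a direct greedy selection by increasing time-index yielding a $3\varepsilon$-cover from a packing; the paper cites Falconer's $5r$-Vitali lemma and passes through an auxiliary stratification $E_m$), combined with the entropy doubling bound to control the $\Psi_q$-weights, and then the same reduction from the premeasure inequality $\overline{\mathscr{B}}\le\overline{\mathscr{P}}$ to $\mathscr{B}\le\mathscr{P}$ via countable covers. Your treatment of Items~2 and~3 matches the paper's (which simply says ``follows directly from the definitions''), and you are in fact more explicit than the paper about the $\varepsilon$-versus-$3\varepsilon$ mismatch, which the paper glosses over.
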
 
\begin{proof}\noindent
\begin{enumerate}
\item It is easy to see that $\mathscr{P}_{\varepsilon, \vartheta}^{q, t} \leq \overline{\mathscr{P}}_{\varepsilon, \vartheta}^{q, t}$ for $\vartheta \in \mathscr{M}\left(Y\right)$.
Next, let $E\subseteq Y$, $N\in \N$ and $\varepsilon>0$, take a centered $(N, \varepsilon)$-covering $\{B^{n_i}_{\varepsilon}(x_i)\}_i$ of $E.$  For $m \in \mathbb{N},$ $\varepsilon>0$ write

$$
E_{m}=\left\{x \in E \ \Big| \  \frac{\vartheta\left(B^{n}_{5\varepsilon}(x)\right)}{\vartheta\left(B^{n}_{\varepsilon}(x)\right)}<m \text { for all } n\geq 1\right\}.
$$
Fix $m\in\mathbb{R}$ and  let $F\subseteq E_m$. We can assume that
$$\overline{\mathscr{P}}_{\varepsilon, \vartheta}^{q, t}(F)<\infty.$$ 
Let $\delta>0$ and choose $N_{1}>0$ such that
$$
\overline{\mathscr{B}}_{\varepsilon, \vartheta}^{q, t}(F)-\frac{\delta}{3} \leq \overline{\mathscr{B}}_{N, \varepsilon, \vartheta}^{q, t}(F) \quad \text { for } \quad N \geq N_{1}.
$$
Next we can choose $N_{2}>0$ such that
$$
\overline{\mathscr{P}}_{N, \varepsilon, \vartheta}^{q, t}(F) \leq \overline{\mathscr{P}}_{\varepsilon, \vartheta}^{q, t}(F)+\frac{\delta}{3} \quad \text { for } \quad N \geq N_{2}.
$$
Let $\mathscr{B}=\left\{B^{n}_{\varepsilon}(x) \mid x \in F, n\geq N_{1} \wedge N_{2}\right\}$. Thus, $\mathscr{B}$ is a Vitali covering of $F$, and according to \cite[Lemma 1.9]{KFF}, we can select a countable (or finite) subfamily. $\left(B^{n_i}_{\varepsilon}(x_i):=B_{i}\right)_{i} \subseteq \mathscr{B}$ such that $$B_{i} \cap B_{j}=\emptyset \  \text{for} \ i \neq j$$ and
$$
F\subseteq \bigcup_{i=1}^{k} B_{i} \subseteq \bigcup_{i=k+1}^{\infty} B^{n_i}_{5\varepsilon}(x_{i}) \quad \text { for all } k.
$$
Since $x_{i} \in E_{m}$ and $n\geq 1$, we obtain that 
\begin{align*}
\sum_{i} \Psi_{q}\left(\vartheta\left(B^{n_i}_{5\varepsilon}(x_{i})\right)\right)e^{-t n_{i}}
& \leq \sum_{i} \Psi_{q}\left(m\vartheta\left(B^{n_i}_{\varepsilon}(x_{i})\right)\right)e^{-t n_{i}}\\
&\leq \Psi_{q}(m)\overline{\mathscr{P}}_{N, \varepsilon, \vartheta}^{q, t}(F) \\
&= \Psi_{q}(m)\overline{\mathscr{P}}_{N, \varepsilon, \vartheta}^{q, t}(F)\\
&\leq \Psi_{q}(m)\left(\overline{\mathscr{P}}_{\varepsilon, \vartheta}^{q, t}(F)+\frac{\delta}{3}\right)<\infty.
\end{align*}
We may thus choose $K \in \mathbb{N}$ such that
$$
\sum_{i=K+1}^{\infty} \Psi_{q}\left(\vartheta\left(B^{n_i}_{5\varepsilon}(x_{i})\right)\right)e^{-t n_{i}} \leq \frac{\delta}{3}.
$$
This implies that
$$
\begin{aligned}
\overline{\mathscr{B}}_{\varepsilon, \vartheta}^{q, t}(F) &\leq \overline{\mathscr{B}}_{N, \varepsilon, \vartheta}^{q, t}(F) +\frac{\delta}{3}\\
& \leq \sum_{i=1}^{K} \Psi_q(\vartheta(B_{i}))e^{-t n_{i}}+\sum_{i=K+1}^{\infty} \Psi_{q}\left(\vartheta\left(B^{n_i}_{5\varepsilon}(x_{i})\right)\right)e^{-t n_{i}}+\frac{\delta}{3} \\
& \leq \sum_{i=1}^{K} \Psi_q(\vartheta(B_{i}))e^{-t n_{i}}+\frac{\delta}{3}+\frac{\delta}{3} \\
& \leq \overline{\mathscr{P}}_{N, \varepsilon, \vartheta}^{q, t}(F)+\frac{2 \delta}{3} \leq \overline{\mathscr{P}}_{\varepsilon, \vartheta}^{q, t}(F)+\delta
\end{aligned}
$$
for all $\delta>0$, which gives that $$\overline{\mathscr{B}}_{\varepsilon, \vartheta}^{q, t}(F)\leq \overline{\mathscr{P}}_{\varepsilon, \vartheta}^{q, t}(F),$$
for all $F\subseteq E_m.$
Let $(F_i)_i$ such that $E_m\subseteq\bigcup\limits_{i}F_i,$  then we have
\begin{align*}
    \mathscr{B}_{\varepsilon, \vartheta}^{q, t}(E_m) &=\mathscr{B}_{\varepsilon, \vartheta}^{q, t}\bigg(\bigcup_i(F_i\cap E_m)\bigg)\\ &\leq\sum\limits_{i}\mathscr{B}_{\varepsilon, \vartheta}^{q, t}(F_i\cap E_m)\\ &\leq\sum\limits_{i}\sup\limits_{F\subset F_i\cap E_m}\overline{\mathscr{B}}_{\varepsilon, \vartheta}^{q, t}(F)\\ 
&\leq \sum\limits_{i}\sup\limits_{F\subset F_i\cap E_m}\overline{\mathscr{P}}_{\varepsilon, \vartheta}^{q, t}(F)\\
& \leq \sum\limits_{i}\overline{\mathscr{P}}_{\varepsilon, \vartheta}^{q, t}(F_i),
\end{align*}
which implies that
\begin{align*}
\mathscr{B}_{\varepsilon, \vartheta}^{q, t}(E_m)\leq C~ \mathscr{P}_{\varepsilon, \vartheta}^{q, t}(E_m),
\end{align*}
hence the result is obvious since $E_m\nearrow E.$

\item Let $E \subseteq Y$ and $\varepsilon>0.$ If $0<s<\mathscr{E}_{\vartheta, q}^{B}(f, Z, \varepsilon)$,  we have 
$$\mathscr{B}_{\varepsilon, \vartheta}^{q, s}(Z)=\infty.$$
According to (1), we conclude that
$\mathscr{P}_{\varepsilon, \vartheta}^{q, s}(Z) \geq \mathscr{B}_{\varepsilon, \vartheta}^{q, s}(Z)=\infty.$
Then $s<\mathscr{E}_{\vartheta, q}^{B}(f, Z, \varepsilon),$
whence $\mathscr{E}_{\vartheta, q}^{B}(f, E, \varepsilon) \leq \mathscr{E}_{\vartheta, q}^{P}(f, E, \varepsilon),$
letting $\varepsilon\rightarrow 0,$ we have $$\mathscr{E}_{\vartheta, q}^{B}(f, E) \leq \mathscr{E}_{\vartheta, q}^{P}(f, E).$$
\item It follows directly from the definitions.
\end{enumerate}
\end{proof} 

\subsection{ Baire functions and Baire classes.}
Next, we provide the definitions of the Baire functions and Baire classes, and more details can be found in \cite{GSS}.
Let $Y$ be a metric space, $\Sigma_{1}^{0}$ be the class of open sets in $Y$, i.e.
$$
\Sigma_{1}^{0}=\left\{E^{1} \subseteq Y \ \vert \ E^{1} \text { open in } Y\right\}
.$$
Then, we consider $\Pi_{2}^{0}$ the class of countable intersections of sets in $\Sigma_{1}^{0}$:
$$
\Pi_{2}^{0}=\left\{F^{2} \subseteq Y \ \Big| \ F^{2}=\bigcap_{i=1}^{\infty} E_{i}^{1}, E_{i}^{1} \in \Sigma_{1}^{0}, \forall i \in \mathbb{N}\right\}.
$$
It follows that $\Sigma_{1}^{0} \subseteq \Pi_{2}^{0}$. For an ordinal $\iota< \vartheta$ with $1 \leq \vartheta<\Omega$ (where $\Omega$ is the first uncountable cardinal) define $\Sigma_{\iota}^{0}$ as the collection of countable unions of sets in $\Pi_{\iota-1}^{0}$:
$$
\Sigma_{\iota}^{0}=\left\{E^{\iota} \subseteq Y \ \Big| \ E^{\iota}=\bigcup_{i=1}^{\infty} F_{i}^{\iota-1}, F_{i}^{\iota-1} \in \Pi_{\iota-1}^{0}, \forall i \in \mathbb{N}\right\}.
$$
Consider $\Pi_{\iota}^{0}$ as the set of countable intersections of sets in $\Sigma_{\iota-1}^{0}$, i.e.,
$$
\Pi_{\iota}^{0}=\left\{F^{\iota} \subseteq Y \ \vert \ F^{\iota}=\bigcap_{i=1}^{\infty} E_{i}^{\iota-1}, E_{i}^{\iota-1} \in \Sigma_{\iota-1}^{0}, \forall i \in \mathbb{N}\right\}.
$$
For every $\iota<\vartheta$, we have $\Sigma_{\iota}^{0} \subseteq \Pi_{\iota+1}^{0}$. Therefore
\begin{equation*}
\Sigma_{1}^{0} \subseteq \Pi_{2}^{0} \subseteq \Sigma_{3}^{0} \subseteq \ldots \subseteq \Sigma_{\iota}^{0} \subseteq \Pi_{\iota+1}^{0} \subseteq \ldots.
\end{equation*}
Clearly, $\Pi_{\vartheta}^{0}$ represents the collection of countable unions of sets in $\Pi_{\iota}^{0}$, where $\iota < \vartheta$. Similarly, let $\Pi_{1}^{0}$ denote the set of closed subsets of $Y$, that is,
$$
\Pi_{1}^{0}=\left\{F^{1} \subseteq Y \ \vert \ F^{1} \text { closed in } Y\right\}.
$$
Define $\Sigma_{2}^{0}$ as the collection of countable unions of sets in $\Pi_{1}^{0}$ by
$$
\Sigma_{2}^{0}=\left\{E^{2} \subseteq Y \ \Big| \  E^{2}=\bigcup_{i=1}^{\infty} F_{i}^{1}, F_{i}^{1} \in \Pi_{1}^{0}, \forall i \in \mathbb{N}\right\}.
$$
So $\Pi_{1}^{0} \subseteq \Sigma_{2}^{0}$.
For $\iota<\vartheta$, let $\Sigma_{\iota}^{0}$ be the class of countable unions of sets in $\Pi_{\iota-1}^{0}$ :
$$
\Sigma_{\iota}^{0}=\left\{E^{\iota} \subseteq Y \ \Big| \ E^{\iota}=\bigcup_{i=1}^{\infty} F_{i}^{\iota-1}, F_{i}^{\iota-1} \in \Pi_{\iota-1}^{0}, \forall i \in \mathbb{N}\right\}.
$$
Therefore, if $\iota \in(0, \vartheta),$ we have
\begin{equation*}
\Pi_{1}^{0} \subseteq \Sigma_{2}^{0} \subseteq \Pi_{3}^{0} \subseteq \ldots \subseteq \Pi_{\iota}^{0} \subseteq \Sigma_{\iota+1}^{0} \subseteq \ldots .
\end{equation*}
Obviously, $\Sigma_{\vartheta}^{0}$ are the class of countable unions of sets in $\Pi_{\iota}^{0}$, where $\iota<\vartheta.$

Let $\mathscr{B}(Y)$ represent the Borel $\sigma$-algebra on $Y$. We define $\mathscr{G}(Y) = \mathscr{G}$ as the set of open subsets of $Y$ and $\mathscr{F}(Y) = \mathscr{F}$ as the set of closed subsets of $Y$. Therefore,
$$
\Sigma_{1}^{0}(Y)=\mathscr{G}, ~ \Sigma_{2}^{0}(Y)=\mathscr{F}_{\sigma}, ~ \Sigma_{3}^{0}(Y)=\mathscr{G}_{\delta \sigma}, ~ \Sigma_{4}^{0}(Y)=\mathscr{F}_{\sigma \delta \sigma}, ~ \cdots
$$
and
$$
\Pi_{1}^{0}(Y)=\mathscr{F}, ~ \Pi_{2}^{0}(Y)=\mathscr{G}_{\delta}, ~ \Pi_{3}^{0}(Y)=\mathscr{F}_{\sigma \delta}, ~ \Pi_{4}^{0}(Y)=\mathscr{G}_{\delta \sigma \delta}, ~ \cdots.
$$

The classification of Baire functions starts with continuous maps, belonging to class $0$, denoted by
$$
\mathscr{C}_{0}=\left\{\phi^{0}: Y \longrightarrow \mathbb{R} \ \vert \ \phi^{0}(x) \text { is continuous on } \mathbb{R}\right\} \text {. }
$$
Functions that are pointwise limits of convergent sequences of continuous maps belong to class $1$, denoted by
$$
\mathscr{C}_{1}=\left\{\phi^{1}: Y \longrightarrow \mathbb{R} \ \vert \ \phi^{1}(x)=\lim _{n \rightarrow \infty} \phi_{n}^{0}(x) \text { and  for all } n,~ \phi_{n}^{0}(x) \in \mathscr{C}_{0}\right\}.
$$
For a successor ordinal $\iota$, we define Baire functions of order $\iota$ to belong to the class
$$
\mathscr{C}_{\iota}=\left\{\phi^{\iota}: Y\longrightarrow \mathbb{R} \ \vert \ \phi^{\iota}(x)=\lim _{n \rightarrow \infty} \phi_{n}^{\iota-1}(x) \text { and for all } n,~ \phi_{n}^{\iota-1}(x) \in \mathscr{C}_{\iota-1}\right\}.
$$
\begin{deff}
Assume we have defined all classes of order $\iota<\kappa$, where $\kappa$ is a limit ordinal. Then we refer to
$$
\mathscr{C}_{\kappa}=\left\{\phi^{\kappa}: Y \longrightarrow \mathbb{R} \ \Big| \ \phi^{\kappa}(x)=\lim _{n \rightarrow \infty} \phi_{n}^{*}(x) \text { and for all } n,~ \phi_{n}^{*}(x) \in \bigcup_{\iota<\kappa} \mathscr{C}_{\iota}\right\}
$$
Baire class of order $\kappa$.
\end{deff}
Through transfinite induction, the family comprising all Baire functions over $\kappa$, $0 \leq \kappa<\Omega$ is defined as
$$
\mathscr{C}_{\Omega}=\left\{\phi^{\Omega}: Y \longrightarrow \mathbb{R} \ \Big| \ \phi^{\kappa}(x)=\lim _{n \rightarrow \infty} \phi_{n}^{**}(x) \text { and for all } n,~ \phi_{n}^{**}(x) \in \bigcup_{\kappa<\Omega} \mathscr{C}_{\kappa}\right\}.
$$
Certainly, every continuous map $\phi$ can be seen as the pointwise limit of a convergent sequence of functions, all of which are equal to $\phi$. Thus, $\mathscr{C}_{0} \subseteq \mathscr{C}_{1}$, leading to
$$
\mathscr{C}_{0} \subseteq \mathscr{C}_{1} \subseteq \ldots \subseteq \mathscr{C}_{\vartheta} \subseteq \ldots \text {, for all } \kappa \in[0, \Omega).
$$
\begin{deff}\cite{MM}
A set $E \subseteq Y$ is termed analytic if it is the continuous image of a Borel set.
\end{deff}

\subsection{Hausdorff metric}
As \cite{OLMEA}, We will equip $\mathscr{K}(Y)$ with the topology generated by the Hausdorff metric $H_{\rho}$ on $\mathscr{K}(Y)$,

$$
H_{\rho}(A, B)=\max \left\{\sup _{x \in A} \rho (x, B), \sup _{x \in B} \rho (x, A)\right\},
$$
where $\rho (x, A)=\inf \{\rho (x, a)  \ \vert \ a \in A\}$ for $x \in Y$ and $A \subseteq Y$. It is well-known that $\mathscr{K}(Y)$ is Polish if $Y$ is Polish (\cite[4.5.23]{En}). Then, we consider $H_{\rho}^{\emptyset}$ on $\mathscr{K}(Y) \cup\{\emptyset\}$,

$$
H_{\rho}^{\emptyset}(A, B)= \begin{cases}0 & \text { if } A=\emptyset \text { and } B=\emptyset,\\ \\ 1 & \text { if } A =\emptyset \text{ or } B = \emptyset,\\ \\ H_{\rho}(A, B) & \text { if } A \neq \emptyset \text { and } B \neq \emptyset\end{cases}
$$
and $\mathscr{K}(Y) \cup\{\emptyset\}$ will always be endowed with the topology generated by $H_{\rho}^{\emptyset}$.



\section{ Measurability of $(q, \vartheta)$-Bowen topological entropy}\label{Section3}

In this section, we primarily state some properties and theorems of the $(q, \vartheta)$-Bowen topological entropy, as well as results that may be useful, and further consider the measurability of the $(q, \vartheta)$-Bowen topological entropy.

\begin{lem}\label{lem3.1}

 Let $t, c \in \mathbb{R}, N\in \N$ and $\varepsilon>0$. Then the set $$\left\{(K, \vartheta, q) \in \mathscr{K}\left(Y\right) \times \mathscr{M}\left(Y\right) \times \mathbb{R} \ \vert \ \overline{\mathscr{B}}_{N, \varepsilon, \vartheta}^{q, t}(K)<c\right\}$$ is open.   
\end{lem}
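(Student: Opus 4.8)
The plan is to show that the complement is closed, or equivalently, to exhibit the set as a union of ``basic open pieces'' coming from finite covers that already witness a value below $c$. Fix $(K_0,\vartheta_0,q_0)$ with $\overline{\mathscr{B}}_{N,\varepsilon,\vartheta_0}^{q_0,t}(K_0)<c$. By definition of the infimum there is a \emph{finite} centered $(N,\varepsilon)$-cover $\{B^{n_i}_\varepsilon(x_i)\}_{i=1}^k$ of $K_0$ with centers $x_i\in K_0$, $n_i\ge N$, and
\[
\sum_{i=1}^k \Psi_{q_0}\bigl(\vartheta_0(B^{n_i}_\varepsilon(x_i))\bigr)e^{-tn_i}<c .
\]
(One may take the cover finite because $K_0$ is compact and the $B^{n_i}_\varepsilon(x_i)$ are open; from a countable cover extract a finite subcover, which only decreases the sum when $q_0\ge 0$, and when $q_0<0$ one instead truncates a countable cover whose tail contributes less than the slack $c-\overline{\mathscr{B}}_{N,\varepsilon,\vartheta_0}^{q_0,t}(K_0)$ — this truncation subtlety for $q<0$ is the one place requiring care.) The goal is then to perturb $K_0$, $\vartheta_0$, $q_0$ slightly and keep the same family of ball-centers $x_i$ (with the same $n_i$) as a valid witness.

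Next I would establish three stability facts. First, the \emph{cover property} is open: since each $B^{n_i}_\varepsilon(x_i)$ is an open set (open Bowen ball), the compact set $K_0\subseteq\bigcup_i B^{n_i}_\varepsilon(x_i)$ has a positive Lebesgue-type margin, so any $K$ with $H_\rho(K,K_0)$ small enough is still covered by the \emph{same} balls $B^{n_i}_\varepsilon(x_i)$ — note the balls here are anchored at the fixed points $x_i\in Y$, not forced to be centered in $K$, and the family $\{B^{n_i}_\varepsilon(x_i)\}$ is a centered cover for $K$ once we also move the centers; to handle the ``centered'' requirement cleanly, replace $x_i$ by a nearby point of $K$ — more robustly, observe that $\overline{\mathscr B}^{q,t}_{N,\varepsilon,\vartheta}$ is defined via an infimum, so any valid (not necessarily centered, or centered-after-perturbation) cover with small sum suffices to bound it, and small Hausdorff perturbations of $K_0$ do admit such covers with sums close to the original. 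Second, the map $\vartheta\mapsto\vartheta(B^{n_i}_\varepsilon(x_i))$ is lower semicontinuous in the weak-$*$ topology on $\mathscr{M}(Y)$ because $B^{n_i}_\varepsilon(x_i)$ is open (Portmanteau). Third, $\Psi_s(x)$ is jointly continuous in $(s,x)$ on the relevant region: for $s>0$ it is $x^s$, continuous; for $s=0$ it is constantly $1$; for $s<0$, $\Psi_s(x)=x^s$ on $x>0$ and $=\infty$ at $x=0$, and here all the relevant values $\vartheta_0(B^{n_i}_\varepsilon(x_i))$ are strictly positive (else the sum would be $+\infty\not<c$), so near $(q_0,\vartheta_0)$ we stay in the region $x>0$ where $(s,x)\mapsto x^s$ is continuous.

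Combining these: each summand $\Psi_q\bigl(\vartheta(B^{n_i}_\varepsilon(x_i))\bigr)e^{-tn_i}$, as a function of $(\vartheta,q)$, is — near $(\vartheta_0,q_0)$ — bounded above by a function that is continuous in $q$ and upper semicontinuous in $\vartheta$ (using lower semicontinuity of $\vartheta\mapsto\vartheta(\text{open set})$ together with the sign of $q$: for $q\ge0$, $x\mapsto x^q$ is increasing so we get upper semicontinuity only after noting we want an \emph{upper} bound on the sum — here it is cleaner to argue that for the perturbed $\vartheta$ the measures of the same open balls are at least $\vartheta_0(B^{n_i}_\varepsilon(x_i))-\eta$, hence for $q\ge 0$ the summand is at most $(\vartheta_0(B^{n_i}_\varepsilon(x_i)) + \text{something})$... ). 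To avoid this sign-juggling I would instead use the standard trick: enlarge the balls slightly, i.e. work with $B^{n_i}_{\varepsilon'}(x_i)$ for $\varepsilon<\varepsilon'$ close to $\varepsilon$ — no, the $\varepsilon$ is fixed in the statement. The honest route: split by the sign of $q_0$. If $q_0\ge 0$, use $\vartheta(U)\to$ is continuous at $\vartheta_0$ along the sequence argument only for \emph{finitely many} fixed open sets simultaneously, so pick a weak-$*$ neighborhood of $\vartheta_0$ on which $|\vartheta(B^{n_i}_\varepsilon(x_i))-\vartheta_0(B^{n_i}_\varepsilon(x_i))|$ is small for all $i\le k$ — this is possible because finitely many open sets give lower-semicontinuity bounds in both directions is false, but one can use a bounded continuous function $\le\mathbf 1_{U}$ approximating from below to get the needed one-sided control and the sum has finitely many terms. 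Then continuity of $(s,x)\mapsto x^s$ near the (finitely many, strictly positive) base values, continuity in $q$, and finiteness of the sum give a neighborhood of $(K_0,\vartheta_0,q_0)$ on which $\sum_i\Psi_q(\vartheta(B^{n_i}_\varepsilon(x_i)))e^{-tn_i}<c$, hence $\overline{\mathscr B}^{q,t}_{N,\varepsilon,\vartheta}(K)<c$ on that neighborhood. The case $q_0<0$ is symmetric, using that then $x\mapsto x^s$ is decreasing so we need a \emph{lower} bound on $\vartheta(B^{n_i}_\varepsilon(x_i))$, which is exactly what lower semicontinuity of $\vartheta\mapsto\vartheta(\text{open set})$ provides — so this case is actually the clean one.

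The main obstacle I anticipate is precisely this interplay between the sign of $q$ and the direction of semicontinuity of $\vartheta\mapsto\vartheta(B^{n_i}_\varepsilon(x_i))$: weak-$*$ convergence only yields $\liminf\vartheta_m(U)\ge\vartheta_0(U)$ for open $U$, which is the ``wrong'' direction when $q\ge0$ and we need an upper bound on $x^q$. I expect the fix is either (i) to note that for the \emph{infimum}-defined quantity $\overline{\mathscr B}^{q,t}_{N,\varepsilon,\vartheta}(K)$ it suffices to find \emph{one} good cover for the perturbed data, and a cover by \emph{open} balls retains enough mass under small weak-$*$ perturbations because we can compare with a slightly smaller closed ball carrying almost the same $\vartheta_0$-mass and then use the reverse Portmanteau inequality ($\limsup\vartheta_m(C)\le\vartheta_0(C)$ for closed $C$), or (ii) to first pass to a finite cover with the summand values bounded away from the boundary behavior of $\Psi_q$ and then invoke joint continuity. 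I would carry out option (i): shrink each $B^{n_i}_\varepsilon(x_i)$ to a closed Bowen sub-ball $\overline{B}^{n_i}_{\varepsilon_i}(x_i)$ with $\varepsilon_i<\varepsilon$ still covering $K_0$ (possible by compactness) and with $\vartheta_0$-measures as close as desired to the original, obtain two-sided control of the perturbed measures via both Portmanteau inequalities, and then conclude by joint continuity of $\Psi$ and of the finite sum.
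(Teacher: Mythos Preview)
Your diagnosis of the central difficulty is exactly right: for $q\ge 0$ you need an \emph{upper} bound on $\vartheta(B^{n_i}_\varepsilon(x_i))$ under weak-$*$ perturbation, and the Portmanteau theorem only gives you a lower one on open sets. Unfortunately, your proposed remedy (option (i)) does not close this gap. Shrinking to a closed sub-ball $\overline{B}^{n_i}_{\varepsilon_i}(x_i)$ with $\varepsilon_i<\varepsilon$ gives, via the closed-set Portmanteau inequality, $\limsup\vartheta(\overline{B}^{n_i}_{\varepsilon_i}(x_i))\le\vartheta_0(\overline{B}^{n_i}_{\varepsilon_i}(x_i))$; but this bounds the measure of the \emph{smaller} set, not of the actual $\varepsilon$-ball $B^{n_i}_\varepsilon(x_i)$ that must appear in the sum. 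No sandwich built from sub-balls can cap $\vartheta(B^{n_i}_\varepsilon(x_i))$ from above, and the boundary $\partial B^{n_i}_\varepsilon(x_i)$ may carry $\vartheta_0$-mass, so passing to a slightly larger closed ball does not help either. A concrete counterexample: take $\vartheta_0=\delta_p$ with $p\in\partial B^{n_i}_\varepsilon(x_i)$, so the summand is $0$; an arbitrarily small weak-$*$ perturbation moves $p$ inside the ball and makes the summand $e^{-tn_i}$.

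The paper avoids this entirely by taking the dual route you mention in your first sentence but then abandon: prove the \emph{complement} $F$ is sequentially closed. Given $(K_m,\vartheta_m,q_m)\in F$ converging to $(K,\vartheta,q)$, and given any finite family $\{B^{n_i}_\varepsilon(x_i)\}$ with auxiliary numbers $s_i$ summing to less than $c$, one must exhibit an index $i$ with $\Psi_q(\vartheta(B^{n_i}_\varepsilon(x_i)))e^{-tn_i}\ge s_i$ (assuming the family covers $K$). In this direction the needed inequality for $q\ge 0$ is a \emph{lower} bound on $\vartheta(B^{n_i}_\varepsilon(x_i))$, and now Portmanteau cooperates: shrink $\varepsilon$ by $\eta>0$ (the compact $K$ is still covered), recenter at $x_{m,i}\in K_m\cap B^{n_i}_{\eta/4}(x_i)$ so that the resulting balls form a genuinely centered cover of $K_m$, use $(K_m,\vartheta_m,q_m)\in F$ to find a bad index $i(m)$, pigeonhole on $m$ to stabilise $i(m)=i$, and finally pass to the limit via the chain $B^{n_i}_{\varepsilon-\eta}(x_i)\subseteq B^{n_i}_{\varepsilon-\frac34\eta}(x_{m,i})\subseteq \overline{B}^{n_i}_{\varepsilon-\frac12\eta}(x_i)\subseteq B^{n_i}_\varepsilon(x_i)$ together with both Portmanteau inequalities. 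The recentering step also disposes of the ``centered cover'' issue you flagged. For $q<0$ one uses the leftmost inclusion and the open-set Portmanteau bound instead; the argument is symmetric. I recommend you rewrite along these lines.
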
 
\begin{proof}
Let

$$
\begin{aligned}
& G=\Biggl\{(K, \vartheta, q) \in \mathscr{K}\left(Y\right) \times \mathscr{M}\left(Y\right) \times \mathbb{R}  \ \vert \ \\
& \text { there exist finitely many Bowen balls } B^{n_1}_{\varepsilon}(x_1),\ldots ,B^{n_k}_{\varepsilon}(x_k) \\&
\text{ with } x_i\in Y,\ n_i\geq N \text{ and }
s_1,\ldots ,s_k \text{ with } s_1+\ldots+s_k<c\text{ that satisfy }\\
&\text { (1) } K \subseteq \bigcup_{i=1}^{k} B^{n_i}_{\varepsilon}(x_i), \\
&\text { (2) } \Psi_{q}(\vartheta(B^{n_i}_{\varepsilon}(x_i)))e^{-tn_i}<s_{i} \text { for all } i=1, \ldots, k \Biggl\}.
\end{aligned}
$$
It is obvious that
$$
\left\{(K, \vartheta, q) \in \mathscr{K}\left(Y\right) \times \mathscr{M}\left(Y\right) \times \mathbb{R}  \ \vert \ \overline{\mathscr{B}}_{N, \varepsilon, \vartheta}^{q, t}(K)<c\right\}=G.
$$
Put $F=\left(\mathscr{K}\left(Y\right) \times \mathscr{M}\left(Y\right) \times \mathbb{R}\right) \backslash G$. Next, we will prove that the set $F$ is closed. Let $(K, \vartheta, q) \in \mathscr{K}\left(Y\right) \times \mathscr{M}\left(Y\right) \times \mathbb{R}$ and $\left(K_{m}, \vartheta_{m}, q_{m}\right)_{m}$ be a sequence in $F$ with $\left(K_{m}, \vartheta_{m}, q_{m}\right) \rightarrow(K, \vartheta, q)$. We now show that $(K, \vartheta, q) \in F$. 

For any finitely many Bowen balls $B^{n_1}_{\varepsilon}(x_1),\ldots ,B^{n_k}_{\varepsilon}(x_k)$ with $x_i\in Y,\ n_i\geq N $ and $s_1,\ldots ,s_k $ with $ s_1+\ldots+s_k<c$. We must now show that
\begin{equation}\label{3.1}
K \not\subseteq \bigcup_{i}^k B^{n_i}_{\varepsilon}(x_i)
\end{equation}
or
\begin{equation*}
\Psi_{q}(\vartheta(B^{n_i}_{\varepsilon}(x_i)))e^{-tn_i}\geq s_{i} \quad \text { for some } i \in\{1, \ldots, k\}.
\end{equation*}
If (\ref{3.1}) holds, the proof is complete. Therefore, we may assume that (\ref{3.1}) does not hold, i.e.,
\begin{equation}\label{3.3}
K \subseteq \bigcup_{i}^k B^{n_i}_{\varepsilon}(x_i).
\end{equation}
Given that $ K $ is compact, (\ref{3.3}) ensures the existence of some $ \eta_{0} > 0 $ such that
\begin{equation}\label{3.4}
K \subseteq \bigcup_{i}^k B^{n_i}_{\varepsilon-\eta}(x_i) \quad \text { for } 0<\eta<\eta_{0}.
\end{equation}
We now claim that for each $0<\eta<\eta_{0}$ there exists an $i(\eta) \in \{1, \ldots, k\}$ such that
$$
s_{i(\eta)} \leq \begin{cases}\Psi_{q}(\vartheta(B^{n_{i(\eta)}}_{\varepsilon-\eta}(x_{i(\eta)})))e^{-tn_{i(\eta)}} & \text { for } q<0,\\ \\ \Psi_{q}(\vartheta(B^{n_{i(\eta)}}_{\varepsilon}(x_{i(\eta)})))e^{-tn_{i(\eta)}} & \text { for } 0 \leq q.\end{cases}
$$
Fix $ 0 < \eta < \eta_{0} $. From (\ref{3.4}) and the fact that $ K_{m} \to K $, it follows that we can select an integer $ M $ such that
\begin{equation}\label{3.5}
K_{m} \subseteq \bigcup_{i}^k B^{n_i}_{\varepsilon-\eta}(x_i) \quad \text { for } m \geq M
\end{equation}
and
$$
K_{m} \cap \bigcup_{i}^k B^{n_i}_{\frac{1}{4} \eta}(x_i) \neq \emptyset \quad \text { for } m \geq M \text { and } i \in\{1, \ldots, k\}.
$$
Now, fix $m \geq M$ and choose $x_{m, i} \in K_{m} \cap B^{n_i}_{\frac{1}{4} \eta}(x_i)$ for $i=1, \ldots, n$. Observe that
\begin{equation}\label{3.6}
B^{n_i}_{\varepsilon-\eta}(x_i)\subseteq B^{n_i}_{\varepsilon-\frac{3}{4}\eta}(x_{m, i}) \subseteq B^{n_i}_{\varepsilon-\frac{1}{2} \eta}(x_i).
\end{equation}
In particular, we have
\begin{equation}\label{3.7}
K_{m} \subseteq \bigcup_{i}^k B^{n_i}_{\varepsilon-\frac{3}{4} \eta}(x_{m, i}).
\end{equation}
We infer from (\ref{3.7}) and the fact that $\left(K_{m}, \vartheta_{m}, q_{m}\right) \in F$ that
\begin{equation}\label{3.8}
\Psi_{q_{m}}\left(\vartheta_{m}(B^{n_{i(m)}}_{ r_{i(m)}-\frac{3}{4} \eta}(x_{m, i(m)}))\right)e^{-tn_{i(m)}} \geq s_{i(m)} \quad \text { for some }
i(m) \in\{1, \ldots, k\}.    
\end{equation}
Next, we can choose $i=i(\eta) \in\{1, \ldots, k\}$ such that there exists a strictly increasing sequence $\left(m_{j}\right)_{j}$ of positive integers with $i\left(m_{j}\right)=i$ for all $j$.
Since $\vartheta_{m_{j}} \rightarrow \vartheta$ weakly, (\ref{3.6}) implies that
\begin{align}\label{3.9}
\begin{split}
\vartheta\left(B^{n_i}_{\varepsilon-\eta}(x_{i})\right) &
\leq \underset{j}{\liminf } \vartheta_{m_{j}}\left(B^{n_i}_{\varepsilon-\eta}(x_{i})\right) \\&\leq \liminf _{j} \vartheta_{m_{j}}\left(B^{n_i}_{\varepsilon-\frac{3}{4} \eta}(x_{m_{j}, i})\right) 
\\& =\underset{j}{\liminf } \vartheta_{m_{j}}\left(B^{n_i}_{\varepsilon-\frac{3}{4} \eta}(x_{m_{j}, i(m_{j})})\right)
\end{split}
\end{align}
and
\begin{align}\label{3.10}
\begin{split}
\vartheta\left(B^{n_i}_{\varepsilon}(x_i)\right) & \geq \vartheta\left(B^{n_i}_{\varepsilon-\frac{1}{2} \eta}(x_{i})\right) \\
& \geq \underset{j}{\limsup } \vartheta_{m_{j}}\left(B^{n_i}_{\varepsilon-\frac{1}{2} \eta}(x_{i})\right) \\&\geq \underset{j}{\limsup } \vartheta_{m_{j}}\left(B^{n_i}_{\varepsilon-\frac{3}{4} \eta}(x_{m_{j}, i})\right) \\
& =\underset{j}{\limsup } \vartheta_{m_{j}}\left(B^{n_i}_{\varepsilon-\frac{3}{4} \eta}(x_{m_{j}, i(m_{j})})\right).
\end{split}
\end{align}

For $j$, write $u_{j}=\vartheta_{m_{j}}\left(B^{n_i}_{\varepsilon-\frac{3}{4} \eta}(x_{m_{j}, i(m_{j})})\right)$, and observe that (\ref{3.9}) and (\ref{3.10}) imply that $\left(u_{j}\right)_{j}$ is a bounded sequence, whence $\Psi_{q-q_{m_{j}}}(u_{j}) \rightarrow 1$ as $j \rightarrow \infty$. Hence, if $q<0$, then inequalities (\ref{3.8}) and (\ref{3.9}) imply that
$$
\begin{aligned}
\Psi_{q}(\vartheta(B^{n_i}_{\varepsilon-\eta}(x_{i})))e^{-tn_i} & \geq \underset{j}{\limsup } \Psi_{q_{m_{j}}}(u_{j}) \Psi_{q-q_{m_{j}}}(u_{j})e^{-tn_i} \\
& =\underset{j}{\limsup } \Psi_{q_{m_{j}}}(u_{j})e^{-tn_i} \\
& \geq \underset{j}{\limsup } s_{i\left(m_{j}\right)}=s_{i}
\end{aligned}
$$
and if $0 \leq q$, then equations (\ref{3.8}) and (\ref{3.10}) imply that
$$
\begin{aligned}
\Psi_{q}(\vartheta(B^{n_i}_{\varepsilon}(x_i)))e^{-tn_i} & \geq \underset{j}{\limsup } \Psi_{q_{m_{j}}}(u_{j}) \Psi_{q-q_{m_{j}}}(u_{j})e^{-tn_i} \\
& =\underset{j}{\limsup } \Psi_{q_{m_{j}}}(u_{j})e^{-tn_i} \\
& \geq \underset{j}{\limsup } s_{i\left(m_{j}\right)}=s_{i}.
\end{aligned}
$$
This completes the proof of the claim.

\bigskip
By the claim, there exists a sequence $ \left( \eta_{m} \right)_{m} $ of positive real numbers and an index $ i \in \{1, \ldots, k\} $ such that $ \eta_{m} \to 0 $ and $ i(\eta_{m}) = i $ for all $ m $, i.e.,
$$
s_{i} \leq \begin{cases}\Psi_{q}(\vartheta(B^{n_i}_{\varepsilon-\eta_{m}}(x_{i})))e^{-tn_i} & \text { for } q<0,\\ \\ \Psi_{q}(\vartheta(B^{n_i}_{\varepsilon}(x_i)))e^{-tn_i} & \text { for } 0 \leq q. \end{cases}
$$
Letting $m \rightarrow \infty$ yields the result.
\end{proof} 
\begin{lem}\label{lemmm3.2222}
Let $\vartheta \in \mathscr{M}\left(Y\right)$ and $q, t \in \mathbb{R}$.
\begin{enumerate}
 \item  $\overline{\mathscr{B}}_{\varepsilon, \vartheta}^{q, t}(E) \leq \overline{\mathscr{B}}_{\varepsilon, \vartheta}^{q, t}(\overline{E})$ for $E \subseteq Y$.
 \item $\mathscr{B}_{\varepsilon, \vartheta}^{q, t}(K)=\sup\limits _{\substack{L \subseteq K \\ \text { compact }}} \overline{\mathscr{B}}_{\varepsilon, \vartheta}^{q, t}(L)$ for compact subsets $K \subseteq Y$. 
\end{enumerate} 
\end{lem}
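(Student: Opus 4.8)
The plan is to prove the two items in turn, with item~(1) feeding directly into item~(2). For item~(1), I would argue that a centered $(N,\varepsilon)$-covering of $\overline{E}$ restricts to a covering of $E$ in a way that does not increase the associated sum $\sum_i \Psi_q(\vartheta(B^{n_i}_\varepsilon(x_i)))e^{-tn_i}$. The subtlety is that in a centered covering the centers $x_i$ must lie in $E$ (resp.\ $\overline{E}$), so one cannot literally reuse a covering of $\overline{E}$ to cover $E$. Instead I would take a centered $(N,\varepsilon)$-covering $\{B^{n_i}_\varepsilon(x_i)\}_i$ of $E$, note that each $x_i\in E\subseteq\overline{E}$, so the \emph{same} family is automatically a centered $(N,\varepsilon)$-covering of $E$; the point is rather the reverse direction. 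Actually the cleaner route: every centered $(N,\varepsilon)$-covering of $E$ has centers in $E\subseteq\overline E$, but need not cover $\overline E$; conversely, given a centered $(N,\varepsilon)$-covering of $\overline E$ with centers $x_i\in\overline E$, approximate each $x_i$ by a point $x_i'\in E$ with $\rho_{n_i}(x_i,x_i')$ small, so that $B^{n_i}_\varepsilon(x_i)\subseteq B^{n_i}_{\varepsilon+\delta}(x_i')$; this shows $\overline{\mathscr B}^{q,t}_{N,\varepsilon,\vartheta}(\overline E)$ controls $\overline{\mathscr B}^{q,t}_{N,\varepsilon+\delta,\vartheta}(E)$ up to the doubling-type correction on $\Psi_q$, and letting $\delta\to0$ then $N\to\infty$ gives $\overline{\mathscr B}^{q,t}_{\varepsilon,\vartheta}(E)\le\overline{\mathscr B}^{q,t}_{\varepsilon,\vartheta}(\overline E)$. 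I expect the technical care here — handling $q<0$, where $\Psi_q$ blows up near $0$, and ensuring continuity of $\vartheta(B^{n_i}_{\varepsilon'}(x_i'))$ in $\varepsilon'$ for all but countably many radii — to be the main nuisance, and this is exactly the kind of argument already used in the proof of Lemma~\ref{lem3.1}, so I would cite that technique.

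For item~(2), one inequality is immediate: by definition $\mathscr B^{q,t}_{\varepsilon,\vartheta}(K)=\sup_{B\subseteq K}\overline{\mathscr B}^{q,t}_{\varepsilon,\vartheta}(B)\ge\sup_{L\subseteq K,\ L\text{ compact}}\overline{\mathscr B}^{q,t}_{\varepsilon,\vartheta}(L)$ since compact subsets are in particular subsets. For the reverse inequality, fix an arbitrary $B\subseteq K$. By item~(1), $\overline{\mathscr B}^{q,t}_{\varepsilon,\vartheta}(B)\le\overline{\mathscr B}^{q,t}_{\varepsilon,\vartheta}(\overline B)$, and $\overline B$ is a closed subset of the compact set $K$, hence compact, with $\overline B\subseteq K$. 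Therefore $\overline{\mathscr B}^{q,t}_{\varepsilon,\vartheta}(B)\le\sup_{L\subseteq K,\ L\text{ compact}}\overline{\mathscr B}^{q,t}_{\varepsilon,\vartheta}(L)$. Taking the supremum over all $B\subseteq K$ yields $\mathscr B^{q,t}_{\varepsilon,\vartheta}(K)\le\sup_{L\subseteq K,\ L\text{ compact}}\overline{\mathscr B}^{q,t}_{\varepsilon,\vartheta}(L)$, and combining the two inequalities closes the proof.

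The real content is therefore concentrated in item~(1); item~(2) is a short formal consequence once (1) is in hand, using only that $K$ compact and $\overline B\subseteq K$ forces $\overline B$ compact. In writing up (1) I would present the approximation of centers $x_i\in\overline E$ by nearby points of $E$ explicitly, invoke the monotonicity of $N\mapsto\overline{\mathscr B}^{q,t}_{N,\varepsilon,\vartheta}$ and the limit defining $\overline{\mathscr B}^{q,t}_{\varepsilon,\vartheta}$, and — for the $q<0$ case — note that we may perturb the radius $\varepsilon$ slightly to a value at which each $\varepsilon'\mapsto\vartheta(B^{n_i}_{\varepsilon'}(x_i'))$ is continuous, so that $\Psi_q$ applied to these measures converges as the perturbation vanishes. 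That measure-continuity-in-radius point, together with the sign-of-$q$ case split, is the only place where genuine attention is required.
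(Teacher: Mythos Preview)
Your proposal is correct and follows essentially the same route as the paper. For item~(1) the paper does exactly what you settle on: start from a centered $(N,\varepsilon)$-covering of $\overline{E}$, perturb each center $x_i\in\overline{E}$ to a nearby $x_i'\in E$, enlarge the radius slightly, and control the change in $\Psi_q(\vartheta(\cdot))$ via the continuity $\vartheta(B^{n_i}_{\varepsilon+\eta}(x_i))\searrow\vartheta(B^{n_i}_{\varepsilon}(x_i))$ as $\eta\searrow 0$ (no doubling is invoked, so your later formulation in terms of radius-continuity of the measure is the right one, not the earlier ``doubling-type correction'' phrasing). For item~(2) the paper simply writes ``This follows easily from (1)''; your explicit argument---that $\overline{B}\subseteq K$ is compact and item~(1) gives $\overline{\mathscr{B}}^{q,t}_{\varepsilon,\vartheta}(B)\le\overline{\mathscr{B}}^{q,t}_{\varepsilon,\vartheta}(\overline{B})$---is precisely the intended unpacking.
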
 
\begin{proof}\noindent
\begin{enumerate}
\item Let $\varepsilon, \delta>0$ and let $\left(B^{n_i}_{\varepsilon}(x_{i})\right)_{i}$ be a centered $(N, \varepsilon)$-covering of $\overline{E}$. Since $B^{n_i}_{\varepsilon+\eta}(x_{i}) \searrow B^{n_i}_{\varepsilon}(x_{i})$ as $\eta \searrow 0$, there exists $0<\eta_{1}<\delta$ such that $\varepsilon+\eta_{1}<\delta$ and
\begin{align}\label{3.11}
\begin{split}
& a^{q}e^{-tn_i} \leq \Psi_{q}\left(\vartheta\left(B^{n_i}_{\varepsilon}(x_{i})\right)\right)e^{-tn_i}+\frac{\varepsilon}{2^{i}} \quad \text { for all } a \in \mathbb{R} \text { satisfying } \\
& \vartheta\left(B^{n_i}_{\varepsilon}(x_{i})\right) \leq a \leq \vartheta\left(B^{n_i}_{\varepsilon+\eta_1}(x_{i})\right).
\end{split}
\end{align}

Now pick $x_{i}^{\prime} \in B^{n_i}_ {\frac{1}{2}\eta_1}(x_{i}) \cap E$, and observe that (\ref{3.11}) implies that
\begin{equation}\label{3.12}
\Psi_{q}(\vartheta(B^{n_i}_{\varepsilon+\frac{1}{2} \eta_{1}}(x_{i}^{\prime})))e^{-tn_i} \leq \Psi_{q}\left(\vartheta\left(B^{n_i}_{\varepsilon}(x_{i})\right)\right)e^{-tn_i}+\frac{\varepsilon}{2^{i}}.
\end{equation}

Since $\left(B^{n_i}_{\varepsilon+\frac{1}{2} \eta_{1}}(x_{i}^{\prime})\right)_{i}$ is a centered $(N, \varepsilon+\frac{1}{2} \eta_{1})$-covering of $E$, (\ref{3.12}) shows that $$\overline{\mathscr{B}}_{N, \varepsilon, \vartheta}^{q, t}(E) \leq \sum_{i} \Psi_{q}(\vartheta(B^{n_i}_{\varepsilon+\frac{1}{2} \eta_{1}}(x_{i}^{\prime})))e^{-tn_i} \leq \sum_{i} \Psi_{q}\left(\vartheta\left(B^{n_i}_{\varepsilon}(x_{i})\right)\right)e^{-tn_i}+\varepsilon.$$ Hence, $\overline{\mathscr{B}}_{N, \varepsilon, \vartheta}^{q, t}(E) \leq \overline{\mathscr{B}}_{N, \varepsilon, \vartheta}^{q, t}(\overline{E})+\varepsilon .$ Letting $N \rightarrow \infty$ now yields the desired result.
\item This follows easily from (1). 
\end{enumerate}
\end{proof}

\begin{lem}\label{lem3.3}
 Let $\vartheta \in \mathscr{M}\left(Y\right)$ and $q, t \in \mathbb{R}$.
\begin{enumerate}
    \item If $q \leq 0$, then there exists a constant $c>0$ such that $\mathscr{B}_{2\varepsilon, \vartheta}^{q, t} \leq c \overline{\mathscr{B}}_{\varepsilon, \vartheta}^{q, t}$.
    \item If $0<q$ and $\vartheta \in \mathscr{M}_{0}\left(Y\right)$, then there exists a constant $c>0$ such that $\mathscr{B}_{2\varepsilon, \vartheta}^{q, t} \leq c \overline{\mathscr{B}}_{\varepsilon, \vartheta}^{q, t}$.
    \item If $q \leq 0,$ then, for $E \subseteq Y$,  one has $$\mathscr{E}_{\vartheta, q}^{B}(f, E, \varepsilon)=\inf \left\{s \in \mathbb{R} \ \vert \ \overline{\mathscr{B}}_{\varepsilon, \vartheta}^{q, s}(E)=0\right\}=\sup \left\{s \in \mathbb{R} \ \vert \ \overline{\mathscr{B}}_{\varepsilon, \vartheta}^{q, s}(E)=\infty \right\}.$$
    \item If $0<q$ and $\vartheta \in \mathscr{M}_{0}\left(Y\right)$, then,  for $E \subseteq Y$,  we have $$\mathscr{E}_{\vartheta, q}^{B}(f, E, \varepsilon)=\inf \left\{s \in \mathbb{R} \ \vert \ \overline{\mathscr{B}}_{\varepsilon, \vartheta}^{q, s}(E)=0\right\}= \sup \left\{s \in \mathbb{R} \ \vert \ \overline{\mathscr{B}}_{\varepsilon, \vartheta}^{q, s}(E)=\infty\right\}.$$ 
\end{enumerate}
\end{lem}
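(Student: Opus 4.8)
The plan is to establish the two quantitative comparison inequalities (1) and (2) first, and then deduce the characterizations (3) and (4) as formal consequences of the way critical exponents interact with these comparisons and with Lemma \ref{lemmm3.2222}. For part (1), fix $q\le 0$ and start from a centered $(N,2\varepsilon)$-covering $\{B^{n_i}_{2\varepsilon}(x_i)\}_i$ of an arbitrary $B\subseteq E$. The standard trick is to replace each $B^{n_i}_{2\varepsilon}(x_i)$ by a ball of radius $\varepsilon$ centered at a point of $B$: if $B^{n_i}_{2\varepsilon}(x_i)\cap B\neq\emptyset$, pick $y_i\in B^{n_i}_{2\varepsilon}(x_i)\cap B$ and note $B^{n_i}_{2\varepsilon}(x_i)\subseteq B^{n_i}_{3\varepsilon}(y_i)$ (or use the factor $2$ directly with $B^{n_i}_{2\varepsilon}(x_i)\subseteq B^{n_i}_{2\varepsilon}(y_i)$ after the obvious adjustment), discarding balls that miss $B$. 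Since $q\le 0$, $\Psi_q$ is non-increasing on $(0,\infty)$ (and $\Psi_q(0)=\infty$), so enlarging the ball only decreases $\Psi_q(\vartheta(\cdot))$; thus the new centered covering of $B$ has sum $\sum_i \Psi_q(\vartheta(B^{n_i}_{\varepsilon}(y_i)))e^{-tn_i}\le \sum_i \Psi_q(\vartheta(B^{n_i}_{2\varepsilon}(x_i)))e^{-tn_i}$, giving $\overline{\mathscr{B}}_{N,\varepsilon,\vartheta}^{q,t}(B)\le \overline{\mathscr{B}}_{N,2\varepsilon,\vartheta}^{q,t}(B)$ after relabeling; let $N\to\infty$ and take the supremum over $B\subseteq E$ to get $\mathscr{B}_{2\varepsilon,\vartheta}^{q,t}\le\overline{\mathscr{B}}_{\varepsilon,\vartheta}^{q,t}$ (here $c=1$, or a fixed constant if one uses the $3\varepsilon$ version and then relabels $\varepsilon$).

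For part (2), the sign of $q$ reverses monotonicity of $\Psi_q$, so enlarging balls is no longer free — this is exactly where the entropy doubling condition enters. With $\vartheta\in\mathscr{M}_0(Y)$, for small $\varepsilon$ and $a=2$ we have $D_\vartheta(\varepsilon)=\sup_n\sup_x \vartheta(B^n_{2\varepsilon}(x))/\vartheta(B^n_\varepsilon(x))<\infty$, so passing from a radius-$\varepsilon$ ball to a radius-$2\varepsilon$ ball inflates $\vartheta$-mass by at most the factor $D_\vartheta(\varepsilon)$. Running the same covering-replacement argument as in (1) but now tracking the factor $\Psi_q(D_\vartheta(\varepsilon))=D_\vartheta(\varepsilon)^q$ pulled out of each term yields $\mathscr{B}_{2\varepsilon,\vartheta}^{q,t}\le D_\vartheta(\varepsilon)^q\,\overline{\mathscr{B}}_{\varepsilon,\vartheta}^{q,t}$, i.e. $c=D_\vartheta(\varepsilon)^q$. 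I expect this to be the main technical obstacle: one must be careful that the doubling condition is stated with a fixed $a>1$ over all $n$ and $x$ simultaneously, and that the radius used in covering replacement ($2\varepsilon$ versus $3\varepsilon$ versus $5\varepsilon$) matches the constant one is allowed to invoke; matching the factor-$2$ in the statement may require first proving a version with factor $5$ (as already used in the proof of the previous proposition) and then absorbing constants.

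For parts (3) and (4), the key observation is that $\overline{\mathscr{B}}_{\varepsilon,\vartheta}^{q,s}$ is sandwiched between $\mathscr{B}_{\varepsilon,\vartheta}^{q,s}$ and (via (1) or (2)) a constant multiple of $\overline{\mathscr{B}}_{\varepsilon',\vartheta}^{q,s}$ for a comparable $\varepsilon'$, while by definition $\mathscr{B}_{\varepsilon,\vartheta}^{q,s}(E)=\sup_{B\subseteq E}\overline{\mathscr{B}}_{\varepsilon,\vartheta}^{q,s}(B)\ge\overline{\mathscr{B}}_{\varepsilon,\vartheta}^{q,s}(E)$. A positive multiplicative constant does not move the critical value at which a quantity jumps from $\infty$ to $0$, so the exponent determined by the phase transition of $s\mapsto\overline{\mathscr{B}}_{\varepsilon,\vartheta}^{q,s}(E)$ coincides with $\mathscr{E}_{\vartheta,q}^{B}(f,E,\varepsilon)$, the critical value for $s\mapsto\mathscr{B}_{\varepsilon,\vartheta}^{q,s}(E)$. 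Concretely: if $\overline{\mathscr{B}}_{\varepsilon,\vartheta}^{q,s}(E)=\infty$ then $\mathscr{B}_{\varepsilon,\vartheta}^{q,s}(E)=\infty$, so $s\le\mathscr{E}_{\vartheta,q}^{B}(f,E,\varepsilon)$, giving $\sup\{s:\overline{\mathscr{B}}_{\varepsilon,\vartheta}^{q,s}(E)=\infty\}\le\mathscr{E}_{\vartheta,q}^{B}(f,E,\varepsilon)$; conversely, if $\overline{\mathscr{B}}_{\varepsilon,\vartheta}^{q,s}(E)=0$, then for any $B\subseteq E$ monotonicity (Lemma \ref{lemmm3.2222}) and (1)/(2) force $\overline{\mathscr{B}}_{2\varepsilon,\vartheta}^{q,s}(B)=0$ for all such $B$, hence $\mathscr{B}_{2\varepsilon,\vartheta}^{q,s}(E)=0$ and $s\ge\mathscr{E}_{\vartheta,q}^{B}(f,E,2\varepsilon)$; a routine monotonicity-in-$\varepsilon$ argument (or working with the infimum over $\varepsilon$ directly, as the phase-transition value is independent of the cosmetic $2\varepsilon$ after the standard rescaling) closes the chain of inequalities and yields the two displayed equalities. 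The only subtlety is bookkeeping the $2\varepsilon$ versus $\varepsilon$ discrepancy, which is handled exactly as in the inequality $\mathscr{E}_{\vartheta,q}^{P}(f,A,\varepsilon_1)>\mathscr{E}_{\vartheta,q}^{P}(f,A,\varepsilon_2)$ monotonicity already recorded in Section \ref{Section2}.
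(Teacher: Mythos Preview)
Your argument for parts (1)--(2) has the re-centering running in the wrong direction, and this is a genuine gap rather than a typo. Recall that $\mathscr{B}_{2\varepsilon,\vartheta}^{q,t}(E)=\sup_{F\subseteq E}\overline{\mathscr{B}}_{2\varepsilon,\vartheta}^{q,t}(F)$ while $\overline{\mathscr{B}}_{\varepsilon,\vartheta}^{q,t}(E)$ is an \emph{infimum} over centered $(N,\varepsilon)$-coverings of $E$. To prove $\mathscr{B}_{2\varepsilon,\vartheta}^{q,t}(E)\le c\,\overline{\mathscr{B}}_{\varepsilon,\vartheta}^{q,t}(E)$ you must therefore start from an arbitrary centered $(N,\varepsilon)$-covering $\{B^{n_i}_{\varepsilon}(x_i)\}_i$ of $E$, fix $F\subseteq E$, re-center at points $y_i\in B^{n_i}_{\varepsilon}(x_i)\cap F$, and \emph{enlarge} to radius $2\varepsilon$: since $\rho_{n_i}(x_i,y_i)<\varepsilon$, one has $B^{n_i}_{\varepsilon}(x_i)\subseteq B^{n_i}_{2\varepsilon}(y_i)$, so $\{B^{n_i}_{2\varepsilon}(y_i)\}$ is a centered $(N,2\varepsilon)$-covering of $F$. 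This is exactly what the paper does. For $q\le 0$ the inclusion $B^{n_i}_{\varepsilon}(x_i)\subseteq B^{n_i}_{2\varepsilon}(y_i)$ and the fact that $\Psi_q$ is non-increasing give the term-by-term bound with $c=1$; for $q>0$ one uses instead $B^{n_i}_{2\varepsilon}(y_i)\subseteq B^{n_i}_{3\varepsilon}(x_i)$ together with the entropy doubling condition (with $a=3$) to get $c=D_\vartheta(\varepsilon)^{q}$.

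Your version starts from a centered $(N,2\varepsilon)$-covering of $B$ and tries to replace each ball by one of radius $\varepsilon$, but shrinking the radii destroys the covering property: the family $\{B^{n_i}_{\varepsilon}(y_i)\}$ need not cover $B$ at all. Even granting your displayed inequality, the conclusion $\overline{\mathscr{B}}_{N,\varepsilon,\vartheta}^{q,t}(B)\le\overline{\mathscr{B}}_{N,2\varepsilon,\vartheta}^{q,t}(B)$ goes the wrong way and cannot yield $\mathscr{B}_{2\varepsilon,\vartheta}^{q,t}\le\overline{\mathscr{B}}_{\varepsilon,\vartheta}^{q,t}$ after taking the supremum over $B\subseteq E$; in fact it would give $\mathscr{B}_{\varepsilon,\vartheta}^{q,t}\le\mathscr{B}_{2\varepsilon,\vartheta}^{q,t}$, which is useless here. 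Your treatment of parts (3)--(4) is in the same spirit as the paper's (which just says ``follows immediately from (1) and (2), since $\overline{\mathscr{B}}_{\varepsilon,\vartheta}^{q,t}\le\mathscr{B}_{\varepsilon,\vartheta}^{q,t}$''), and your awareness of the $\varepsilon$ versus $2\varepsilon$ bookkeeping is appropriate; but those parts rest on (1)--(2), so the argument collapses until the direction in (1)--(2) is fixed.
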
 
\begin{proof}\noindent
\begin{enumerate}
\item The proof of (1) is very similar to the proof of (2).
\item Since $\vartheta \in \mathscr{M}_{0}(Y)$, there exists a constant $c>0$ such that
$$
c^{-1} \leq\left(\frac{\vartheta\left(B^{n}_{3\varepsilon}(x)\right)}{\vartheta\left(B^{n}_{\varepsilon}(x)\right)}\right)^{q} \leq c \quad \text { for } x  \in Y \text { and } \varepsilon>0.
$$
 Fix $E \subseteq Y$ and let $F \subseteq E$. Let $\varepsilon>0, N\in\mathbb{N}$ and $\left(B^{n_i}_{\varepsilon}(x_{i})\right)_{i}$ be
a centered $(N, \varepsilon)$-covering of $E$. Write $I=\left\{i  \ \vert \ B^{n_i}_{\varepsilon}(x_{i}) \cap F \neq \emptyset\right\}$. For each $i \in$ $I$ choose $y_{i} \in B^{n_i}_{\varepsilon}(x_{i}) \cap F$, and observe that $B^{n_i}_{2 \varepsilon}(y_{i}) \subseteq B^{n_i}_{3 \varepsilon}(x_{i})$, whence $\Psi_{q}(\vartheta(B^{n_i}_{2 \varepsilon}(y_{i}))) \leq \Psi_{q}(\vartheta(B^{n_i}_{3 \varepsilon}(x_{i})))$ (because $0<q$). Also observe that $\left(B^{n_i}_{2 \varepsilon}(y_{i})\right)_{i \in I}$ is a centered $(N, 2\varepsilon)$-covering of $F$. We therefore infer that

\begin{align}\label{3.13}
\begin{split}
\overline{\mathscr{B}}_{N, 2\varepsilon, \vartheta}^{q, t}(F) & \leq \sum_{i \in I} \Psi_{q}(\vartheta(B^{n_i}_{2 \varepsilon}(y_{i})))e^{-tn_i} \leq \sum_{i\in I} \Psi_{q}(\vartheta(B^{n_i}_{3 \varepsilon}(x_{i})))e^{-tn_i} \\
& \leq c \sum_{i\in I} \Psi_{q}\left(\vartheta\left(B^{n_i}_{\varepsilon}(x_{i})\right)\right)e^{-tn_i}.
\end{split}
\end{align}
It follows from (\ref{3.13}) that $\overline{\mathscr{B}}_{N, 2\varepsilon, \vartheta}^{q, t}(F) \leq c \overline{\mathscr{B}}_{N, \varepsilon, \vartheta}^{q, t}(E)$. Letting $N \rightarrow \infty$ now yields $\overline{\mathscr{B}}_{2\varepsilon,\vartheta}^{q, t}(F) \leq c \overline{\mathscr{B}}_{\varepsilon, \vartheta}^{q, t}(E)$ for all $F \subseteq E$, whence 
$${\mathscr{B}}_{2\varepsilon,\vartheta}^{q, t}(E) \leq c \overline{\mathscr{B}}_{\varepsilon, \vartheta}^{q, t}(E).$$
\end{enumerate}
(3)-(4) Follows immediately from (1) and (2), since $\overline{\mathscr{B}}_{\varepsilon, \vartheta}^{q, t} \leq \mathscr{B}_{\varepsilon,\vartheta}^{q, t}$.

\end{proof}

\begin{thm}\label{thm3.4}
Let $N\in \N, t \in \mathbb{R}$ and $\varepsilon>0$.
\begin{enumerate}
\item The map
$$
\mathscr{K}\left(Y\right) \times \mathscr{M}\left(Y\right) \times \mathbb{R} \rightarrow \overline{\mathbb{R}}:(K, \vartheta, q) \mapsto \overline{\mathscr{B}}_{N, \varepsilon, \vartheta}^{q, t}(K)
$$
is upper semi-continuous; in particular of Baire class 1.
\item The map
$$
\mathscr{K}\left(Y\right) \times \mathscr{M}\left(Y\right) \times \mathbb{R} \rightarrow \overline{\mathbb{R}}:(K, \vartheta, q) \mapsto \overline{\mathscr{B}}_{\varepsilon, \vartheta}^{q, t}(K)
$$
is of Baire class 2.
\item The map
$$
\mathscr{K}\left(Y\right) \times \mathscr{M}\left(Y\right) \times \mathbb{R} \rightarrow \overline{\mathbb{R}}:(K, \vartheta, q) \mapsto \mathscr{B}_{\varepsilon, \vartheta}^{q, t}(K)
$$
is $\sigma(\mathscr{A})$-measurable where $\sigma(\mathscr{A})$ denotes the $\sigma$-algebra generated by the family $\mathscr{A}$ of analytic subsets of $\mathscr{K}\left(Y\right) \times \mathscr{M}(Y) \times \mathbb{R}$.
\end{enumerate}    
\end{thm}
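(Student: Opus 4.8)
The plan is to build up the three claims in order, since each relies on the previous one. For part (1), I would show upper semi-continuity directly: fix $c \in \mathbb{R}$ and consider the superlevel-type set
$$
\left\{(K, \vartheta, q) \in \mathscr{K}(Y) \times \mathscr{M}(Y) \times \mathbb{R} \ \vert \ \overline{\mathscr{B}}_{N, \varepsilon, \vartheta}^{q, t}(K) < c\right\}.
$$
This is precisely the set shown to be open in Lemma \ref{lem3.1}. Upper semi-continuity of an $\overline{\mathbb{R}}$-valued map is equivalent to all such sets being open, so part (1) is essentially a restatement of Lemma \ref{lem3.1}. An upper semi-continuous function is of Baire class $1$ by the standard approximation from above by continuous functions (or more simply, since $\{g < c\}$ open for all $c$ already places $g$ in the first Baire class on a metric space); I would cite \cite{GSS} for this.

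For part (2), recall $\overline{\mathscr{B}}_{\varepsilon, \vartheta}^{q, t}(K) = \lim_{N \to \infty} \overline{\mathscr{B}}_{N, \varepsilon, \vartheta}^{q, t}(K)$, and the sequence is monotone non-decreasing in $N$ (as noted after the definition of $\mathscr{B}_{N,\varepsilon}^t$, larger $N$ shrinks the class of admissible coverings). So $\overline{\mathscr{B}}_{\varepsilon, \vartheta}^{q, t}$ is a pointwise (monotone) limit of a sequence of Baire class $1$ functions, hence of Baire class at most $2$ by definition of $\mathscr{C}_2$. This step is short; the only thing to be careful about is that the limit is taken in $\overline{\mathbb{R}}$, but the Baire hierarchy argument goes through for $\overline{\mathbb{R}}$-valued functions (compose with a homeomorphism $\overline{\mathbb{R}} \to [-1,1]$ if one wants to stay in the classical real-valued framework).

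For part (3), the key identity is $\mathscr{B}_{\varepsilon, \vartheta}^{q, t}(K) = \sup_{L \subseteq K,\ L \text{ compact}} \overline{\mathscr{B}}_{\varepsilon, \vartheta}^{q, t}(L)$, which is Lemma \ref{lemmm3.2222}(2). The map $(L, \vartheta, q) \mapsto \overline{\mathscr{B}}_{\varepsilon, \vartheta}^{q, t}(L)$ on $\mathscr{K}(Y) \times \mathscr{M}(Y) \times \mathbb{R}$ is Baire class $2$, hence Borel measurable. The plan is then: for $c \in \mathbb{R}$,
$$
\left\{(K,\vartheta,q) \ \vert \ \mathscr{B}_{\varepsilon, \vartheta}^{q, t}(K) > c\right\} = \left\{(K,\vartheta,q) \ \vert \ \exists\, L \in \mathscr{K}(Y),\ L \subseteq K \text{ and } \overline{\mathscr{B}}_{\varepsilon, \vartheta}^{q, t}(L) > c\right\},
$$
and this is the projection onto $\mathscr{K}(Y) \times \mathscr{M}(Y) \times \mathbb{R}$ of the set
$$
\left\{(K, L, \vartheta, q) \ \vert \ L \subseteq K,\ \overline{\mathscr{B}}_{\varepsilon, \vartheta}^{q, t}(L) > c\right\} \subseteq \mathscr{K}(Y) \times \mathscr{K}(Y) \times \mathscr{M}(Y) \times \mathbb{R}.
$$
The relation $\{(K,L) : L \subseteq K\}$ is closed in $\mathscr{K}(Y)^2$ (standard for the Hausdorff metric), and the second condition defines a Borel set because $\overline{\mathscr{B}}_{\varepsilon, \vartheta}^{q, t}$ is Baire class $2$. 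Hence the set in the larger product is Borel, and its projection — all spaces here being Polish (Section 2 notes $\mathscr{K}(Y)$ is Polish when $Y$ is; $\mathscr{M}(Y)$ is Polish; $\mathbb{R}$ is Polish) — is analytic. Therefore $\{\mathscr{B}_{\varepsilon, \vartheta}^{q, t}(K) > c\}$ is analytic for every $c$, which gives $\sigma(\mathscr{A})$-measurability of the map.

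The main obstacle is part (3): one must correctly identify that passing from the Baire-class-$2$ function $\overline{\mathscr{B}}$ to its supremum over compact subsets is exactly an analytic-set projection, and verify the two ingredients that make the projection argument work — namely that $\{L \subseteq K\}$ is closed (so Borel) in $\mathscr{K}(Y)^2$, and that "$\overline{\mathscr{B}}_{\varepsilon,\vartheta}^{q,t}(L) > c$" cuts out a Borel set in the enlarged product (immediate from part (2), since Baire functions are Borel measurable). Parts (1) and (2) are comparatively routine given Lemma \ref{lem3.1} and the monotone-limit structure; the genuinely new content is recognizing the descriptive-set-theoretic shape of part (3) and invoking that continuous (projection) images of Borel sets in Polish spaces are analytic.
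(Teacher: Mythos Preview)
Your proposal is correct and follows essentially the same route as the paper: part (1) is Lemma \ref{lem3.1}, part (2) is the pointwise limit in $N$, and part (3) uses Lemma \ref{lemmm3.2222}(2) to rewrite $\{\mathscr{B}_{\varepsilon,\vartheta}^{q,t}(K)>c\}$ as the projection of the intersection of the closed set $\{L\subseteq K\}$ with the Borel set $\{\overline{\mathscr{B}}_{\varepsilon,\vartheta}^{q,t}(L)>c\}$, hence analytic. The only cosmetic difference is the ordering of the factors in the enlarged product space.
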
 
\begin{proof}\noindent
\begin{enumerate}
    \item This follows immediately from Lemma \ref{lem3.1}.
    \item Follows from (1) since $\overline{\mathscr{B}}_{\varepsilon, \vartheta}^{q, t}(K)=\lim\limits_{N\rightarrow\infty}\overline{\mathscr{B}}_{N,\varepsilon, \vartheta}^{q, t}(K)$ for all $(K, \vartheta, q) \in$ $\mathscr{K}\left(Y\right) \times \mathscr{M}\left(Y\right) \times \mathbb{R}$.
    \item We must prove that $\left\{(K, \vartheta, q) \in \mathscr{K}(Y) \times \mathscr{M}(Y) \times \mathbb{R}  \ \vert \ \mathscr{B}_{\varepsilon, \vartheta}^{q, t}(K)>c\right\}$ is analytic for all $c \in \mathbb{R}$. Fix $c \in \mathbb{R}$. Define the projection $\pi: \mathscr{K}(Y) \times \mathscr{M}(Y) \times$ $\mathbb{R} \times \mathscr{K}(Y) \rightarrow \mathscr{K}(Y) \times \mathscr{M}(Y) \times \mathbb{R}$ by

$$
\pi(K, \vartheta, q, L)=(K, \vartheta, q).
$$
It now follows from Lemma \ref{lemmm3.2222} that
\begin{align*}
&\Biggl\{(K, \vartheta, q) \in \mathscr{K}(Y) \times \mathscr{M}(Y) \times \mathbb{R}  \ \vert \ \mathscr{B}_{\varepsilon, \vartheta}^{q, t}(K)>c\Biggl\}
\\&
=\Biggl\{(K, \vartheta, q) \in \mathscr{K}(Y) \times \mathscr{M}(Y) \times \mathbb{R} \ \vert \ \text{ there exists a compact subset $L$ of $K$ with }\overline{\mathscr{B}}_{\varepsilon, \vartheta}^{q, t}(L)>c\Biggl\}\\
&=\pi\Biggl(\left\{(K, \vartheta, q, L) \in \mathscr{K}\left(Y\right) \times \mathscr{M}\left(Y\right) \times \mathbb{R} \times \mathscr{K}\left(Y\right)  \ \vert \ L \subseteq K\right\}   \\
& \hspace{2.5cm}\bigcap\{(K, \vartheta, q, L) \in \mathscr{K}\left(Y\right) \times \mathscr{M}\left(Y\right) \times \mathbb{R} \times \mathscr{K}\left(Y\right)  \ \vert \ \overline{\mathscr{B}}_{\varepsilon, \vartheta}^{q, t}(L)>c\}\Biggl).
\end{align*}
Since the set $\left\{(K, \vartheta, q, L) \in \mathscr{K}\left(Y\right) \times \mathscr{M}\left(Y\right) \times \mathbb{R} \times \mathscr{K}\left(Y\right)  \ \vert \ L \subseteq K\right\}$ clearly is closed and the set $\left\{(K, \vartheta, q, L) \in \mathscr{K}\left(Y\right) \times \mathscr{M}\left(Y\right) \times \mathbb{R} \times \mathscr{K}\left(Y\right)  \ \vert \ \overline{\mathscr{B}}_{\varepsilon, \vartheta}^{q, t}(L)>c\right\}$ is Borel, then $$\left\{(K, \vartheta, q) \in \mathscr{K}\left(Y\right) \times \mathscr{M}\left(Y\right) \times \mathbb{R}  \ \vert \ \mathscr{B}_{\vartheta}^{q, t}(K)>c\right\}$$ is analytic.
\end{enumerate}  
\end{proof} 
\begin{thm}\label{thmm3.2}
One has
\begin{enumerate}
\item The map
$$
\mathscr{K}\left(Y\right) \times \mathscr{M}\left(Y\right) \times \mathbb{R} \rightarrow \overline{\mathbb{R}}:(K, \vartheta, q) \mapsto \mathscr{E}_{\vartheta, q}^{B}(f, K, \varepsilon)
$$
is $\sigma(\mathscr{A})$-measurable where $\sigma(\mathscr{A})$ denotes the $\sigma$-algebra generated by the family $\mathscr{A}$ of analytic subsets of $\mathscr{K}\left(Y\right) \times \mathscr{M}\left(Y\right) \times \mathbb{R}$.
\item Write $\Xi=\left(\mathscr{K}\left(Y\right) \times \mathscr{M}\left(Y\right) \times(-\infty, 0]\right) \cup\left(\mathscr{K}\left(Y\right) \times \mathscr{M}_{0}\left(Y\right) \times \mathbb{R}\right)$. The maps
$$
\Xi \rightarrow \overline{\mathbb{R}}:(K, \vartheta, q) \mapsto \overline{\mathscr{E}}_{\vartheta, q}^{B}(f, K)
$$
and 
$$
\Xi \rightarrow \overline{\mathbb{R}}:(K, \vartheta, q) \mapsto \underline{\mathscr{E}}_{\vartheta, q}^{B}(f, K)
$$
are of Baire class 2.
\end{enumerate}   
\end{thm}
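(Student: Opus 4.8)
The plan is to reduce everything to Theorem \ref{thm3.4} and Lemma \ref{lem3.3}, exactly as the analogous static statements are reduced to the measurability of $\mathscr{B}_{\varepsilon,\vartheta}^{q,t}$ in \cite{OLMEA}. For part (1), recall that by the Proposition defining $\mathscr{E}_{\vartheta,q}^{B}(f,K,\varepsilon)$ we have, for any fixed rational $s$, the equivalence $\mathscr{E}_{\vartheta,q}^{B}(f,K,\varepsilon)>s \iff \mathscr{B}_{\varepsilon,\vartheta}^{q,s}(K)=\infty$ together with a matching lower statement; more precisely, since the critical value is the cutoff between $\mathscr{B}_{\varepsilon,\vartheta}^{q,s}(K)=\infty$ and $=0$, one gets for every $c\in\mathbb{R}$
$$
\Bigl\{(K,\vartheta,q)\ \vert\ \mathscr{E}_{\vartheta,q}^{B}(f,K,\varepsilon)>c\Bigr\}=\bigcup_{\substack{s\in\mathbb{Q}\\ s>c}}\Bigl\{(K,\vartheta,q)\ \vert\ \mathscr{B}_{\varepsilon,\vartheta}^{q,s}(K)>0\Bigr\}
$$
(using that $\mathscr{B}_{\varepsilon,\vartheta}^{q,s}(K)>0$ forces $s\le\mathscr{E}_{\vartheta,q}^{B}(f,K,\varepsilon)$, and conversely if $\mathscr{E}_{\vartheta,q}^{B}(f,K,\varepsilon)>c$ pick a rational $s\in(c,\mathscr{E}_{\vartheta,q}^{B}(f,K,\varepsilon))$ so $\mathscr{B}_{\varepsilon,\vartheta}^{q,s}(K)=\infty>0$). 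By Theorem \ref{thm3.4}(3) each set on the right lies in $\sigma(\mathscr{A})$, hence so does the countable union, and as $c$ ranges over $\mathbb{R}$ (equivalently $\mathbb{Q}$) this shows the map is $\sigma(\mathscr{A})$-measurable.

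For part (2), the point is that on the restricted domain $\Xi$ one can replace the two-sided characterization of the critical value by a \emph{one-sided} one using $\overline{\mathscr{B}}_{\varepsilon,\vartheta}^{q,s}$ instead of $\mathscr{B}_{\varepsilon,\vartheta}^{q,s}$: this is precisely the content of Lemma \ref{lem3.3}(3)--(4), which gives, for $(K,\vartheta,q)\in\Xi$,
$$
\mathscr{E}_{\vartheta,q}^{B}(f,K,\varepsilon)=\inf\bigl\{s\in\mathbb{R}\ \vert\ \overline{\mathscr{B}}_{\varepsilon,\vartheta}^{q,s}(K)=0\bigr\}=\sup\bigl\{s\in\mathbb{R}\ \vert\ \overline{\mathscr{B}}_{\varepsilon,\vartheta}^{q,s}(K)=\infty\bigr\}.
$$
Consequently, for every $c\in\mathbb{R}$,
$$
\Bigl\{(K,\vartheta,q)\in\Xi\ \vert\ \mathscr{E}_{\vartheta,q}^{B}(f,K,\varepsilon)<c\Bigr\}=\bigcup_{\substack{s\in\mathbb{Q}\\ s<c}}\Bigl\{(K,\vartheta,q)\in\Xi\ \vert\ \overline{\mathscr{B}}_{\varepsilon,\vartheta}^{q,s}(K)=0\Bigr\},
$$
and by Theorem \ref{thm3.4}(2) the map $(K,\vartheta,q)\mapsto\overline{\mathscr{B}}_{\varepsilon,\vartheta}^{q,s}(K)$ is of Baire class $2$, so each set $\{\overline{\mathscr{B}}_{\varepsilon,\vartheta}^{q,s}(K)=0\}$ is in $\Pi_{3}^{0}$, i.e.\ the sublevel sets of $\mathscr{E}_{\vartheta,q}^{B}(f,K,\varepsilon)$ at rational thresholds are countable unions of $\Pi_3^0$ sets; a symmetric argument with the $\sup$-description handles the superlevel sets. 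Thus $(K,\vartheta,q)\mapsto\mathscr{E}_{\vartheta,q}^{B}(f,K,\varepsilon)$ is itself of Baire class $2$ on $\Xi$ for each fixed $\varepsilon>0$.

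Finally I would pass from the $\varepsilon$-level maps to $\overline{\mathscr{E}}_{\vartheta,q}^{B}(f,K)=\limsup_{\varepsilon\to0}\mathscr{E}_{\vartheta,q}^{B}(f,K,\varepsilon)$ and $\underline{\mathscr{E}}_{\vartheta,q}^{B}(f,K)=\liminf_{\varepsilon\to0}\mathscr{E}_{\vartheta,q}^{B}(f,K,\varepsilon)$. Since $\varepsilon\mapsto\mathscr{E}_{\vartheta,q}^{B}(f,K,\varepsilon)$ is monotone (as noted for the packing analogue, and the same monotonicity in $\varepsilon$ holds here because shrinking $\varepsilon$ only shrinks Bowen balls), these $\limsup$ and $\liminf$ over $\varepsilon\to0$ can be realized as, respectively, a countable infimum and a countable supremum over a sequence $\varepsilon_k\downarrow0$, i.e.\ $\overline{\mathscr{E}}_{\vartheta,q}^{B}(f,K)=\inf_k\sup_{j\ge k}\mathscr{E}_{\vartheta,q}^{B}(f,K,\varepsilon_j)$ and similarly for the lower one. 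A countable $\liminf$/$\limsup$ of Baire class $2$ functions is again Baire class $2$, which gives the claim. The main obstacle I anticipate is bookkeeping the exact Borel/Baire class in the two displayed set identities of part (2) — making sure that the one-sided descriptions from Lemma \ref{lem3.3} really do land the sub- and superlevel sets in the right ambiguous class so that the resulting function is Baire $2$ and not merely $\sigma(\mathscr{A})$-measurable; this is where the restriction to $\Xi$ (so that Lemma \ref{lem3.3} applies) is essential, and one must be careful that $\Xi$ itself, with its subspace structure, interacts correctly with the Borel hierarchy (it does, since $\mathscr{M}_0(Y)$-membership only enters through the choice of which lemma to invoke, not through any new definability requirement).
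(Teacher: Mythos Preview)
Your overall strategy---reducing (1) to Theorem~\ref{thm3.4}(3) and (2) to Lemma~\ref{lem3.3} together with Theorem~\ref{thm3.4}(2)---is exactly the paper's. Part (1) is fine as written. In part (2), however, there are two bookkeeping problems, one of which you correctly anticipated.

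First, your set identity for $\{\mathscr{E}_{\vartheta,q}^{B}(f,K,\varepsilon)<c\}$ is built from the sets $\{\overline{\mathscr{B}}_{\varepsilon,\vartheta}^{q,s}(K)=0\}$. For a Baire class~$2$ map, the preimage of a closed set lies only in $\Pi_3^0$, so the countable union lands in $\Sigma_4^0$ and you get Baire class~$3$, not~$2$. The paper avoids this by writing instead
\[
\bigl\{s<\mathscr{E}_{\vartheta,q}^{B}(f,K,\varepsilon)<t\bigr\}
=\Xi\cap\Bigl(\bigcup_n\{\overline{\mathscr{B}}_{\varepsilon,\vartheta}^{q,\,s+1/n}(K)>1\}\ \cap\ \bigcup_n\{\overline{\mathscr{B}}_{\varepsilon,\vartheta}^{q,\,t-1/n}(K)<1\}\Bigr),
\]
using only \emph{strict} inequalities against the value~$1$. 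Each such set is $\Sigma_3^0$, countable unions stay in $\Sigma_3^0$, and so does the finite intersection, landing the two-sided level set in $\mathscr{G}_{\delta\sigma}$ as required for Baire class~$2$. The repair of your argument is the same: replace $\{\overline{\mathscr{B}}=0\}$ by $\{\overline{\mathscr{B}}<1\}$ and $\{\overline{\mathscr{B}}=\infty\}$ by $\{\overline{\mathscr{B}}>1\}$; this is legitimate since below the critical value $\overline{\mathscr{B}}=\infty$ and above it $\overline{\mathscr{B}}=0$.

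Second, your assertion that ``a countable $\liminf/\limsup$ of Baire class~$2$ functions is again Baire class~$2$'' is false in general: a pointwise limit of Baire class~$2$ functions is Baire class~$3$, and $\liminf/\limsup$ do no better. The monotonicity remark does not save it---the paper nowhere claims monotonicity of $\varepsilon\mapsto\mathscr{E}_{\vartheta,q}^{B}(f,K,\varepsilon)$ for $q\neq 0$ (both the admissible coverings and the weights $\Psi_q(\vartheta(B^n_\varepsilon(x)))$ move with $\varepsilon$), and even a monotone limit of Baire class~$2$ functions is only guaranteed to be Baire class~$3$. The paper's own proof is terse at this very step (it passes from the $\varepsilon$-level identity to the conclusion with the single phrase ``It follows that\dots''), so you are in good company; but the explicit statement you wrote down is incorrect and would need a separate argument.
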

\begin{proof}\noindent
\begin{enumerate}
    \item Follows from (3) of Theorem \ref{thm3.4}.
    \item It follows from Lemma \ref{lem3.3} and (2) of Theorem \ref{thm3.4} that if $s, t \in \mathbb{R}$, consider $(K, \vartheta, q) \in \Xi$ with $s<\overline{\mathscr{E}}_{\vartheta, q}^{B}(f, K)<t,$ then there exists $\varepsilon_0>0$ such that 
    $$s<\mathscr{E}_{\vartheta, q}^{B}(f, K, \varepsilon)<t$$
    for every $0<\varepsilon<\varepsilon_0.$
Then
\begin{align*}
& \Biggl\{(K, \vartheta, q) \in \Xi  \ \vert \ s<\mathscr{E}_{\vartheta, q}^{B}(f, K, \varepsilon)<t\Biggl\}\\
& =\Xi \cap\left(\bigcup_{n}\left\{(K, \vartheta, q) \in \mathscr{K}\left(Y\right) \times \mathscr{M}\left(Y\right) \times \mathbb{R}  \ \vert \ 1<\overline{\mathscr{B}}_{\varepsilon, \vartheta}^{q, s+\frac{1}{n}}(K)\right\}\right. \\
& \left.\quad \cap \bigcup_{n}\left\{(K, \vartheta, q) \in \mathscr{K}\left(Y\right) \times \mathscr{M}\left(Y\right) \times \mathbb{R}  \ \vert \ \overline{\mathscr{B}}_{\varepsilon, \vartheta}^{q, t-\frac{1}{n}}(K)<1\right\}\right) \\
& \quad \in \mathscr{G}_{\delta \sigma}(\Xi).
\end{align*}
It follows that the maps $
\Xi \rightarrow \overline{\mathbb{R}}:(K, \vartheta, q) \mapsto \overline{\mathscr{E}}_{\vartheta, q}^{B}(f, K)$
and $\Xi \rightarrow \overline{\mathbb{R}}:(K, \vartheta, q) \mapsto \underline{\mathscr{E}}_{\vartheta, q}^{B}(f, K)
$ are of Baire class 2.

\end{enumerate}
\end{proof}

\section{Measurability of $(q, \vartheta)$-packing topological entropy}\label{Section4}

In this section, we similarly consider some properties and theorems related to the $(q, \vartheta)$-packing topological entropy and its measurability.

\begin{lem}\label{lemm4.1}
Let $t, c \in \mathbb{R}, N\in \N$ and $\varepsilon>0$. Then $\left\{(K, \vartheta, q) \in \mathscr{K}\left(Y\right) \times \mathscr{M}\left(Y\right) \times \mathbb{R} \ \vert \ \overline{\mathscr{P}}_{N, \varepsilon, \vartheta}^{q, t}(K)>c\right\}$ is open.    
\end{lem}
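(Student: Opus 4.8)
The plan is to show that the complement is closed, mirroring the structure of the proof of Lemma \ref{lem3.1} but with the roles of covering and packing (and of the strict inequalities) interchanged. Concretely, I would describe the set
$$
G=\Biggl\{(K,\vartheta,q)\in\mathscr{K}(Y)\times\mathscr{M}(Y)\times\mathbb{R}\ \Bigl|\ \overline{\mathscr{P}}_{N,\varepsilon,\vartheta}^{q,t}(K)>c\Biggr\}
$$
as the set of triples for which there exist finitely many pairwise disjoint closed Bowen balls $\overline{B}^{n_1}_{\varepsilon}(x_1),\dots,\overline{B}^{n_k}_{\varepsilon}(x_k)$ with centers $x_i\in K$, $n_i\geq N$, together with reals $s_1,\dots,s_k$ with $s_1+\dots+s_k>c$, such that $\Psi_q(\vartheta(\overline{B}^{n_i}_{\varepsilon}(x_i)))e^{-tn_i}>s_i$ for all $i$. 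That this equals $\{\overline{\mathscr{P}}_{N,\varepsilon,\vartheta}^{q,t}(K)>c\}$ is immediate from the definition of $\overline{\mathscr{P}}_{N,\varepsilon,\vartheta}^{q,t}$ as a supremum over centered $(N,\varepsilon)$-packings.

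Next I would take $(K,\vartheta,q)\in G$ witnessed by balls $\overline{B}^{n_i}_{\varepsilon}(x_i)$, values $s_i$, and a sequence $(K_m,\vartheta_m,q_m)\to(K,\vartheta,q)$, and produce, for all large $m$, a centered $(N,\varepsilon)$-packing of $K_m$ with weight $>c$, thereby showing $(K_m,\vartheta_m,q_m)\in G$ eventually; this gives openness of $G$. Since the $x_i$ lie in $K$ and $K_m\to K$ in the Hausdorff metric, for each $i$ I can pick $x_{m,i}\in K_m$ with $\rho_{n_i}(x_{m,i},x_i)$ small; the disjointness of the finitely many closed balls $\overline{B}^{n_i}_\varepsilon(x_i)$ is a strict condition (finite family, compact sets), so after a slight shrink of the radius and for $m$ large the perturbed balls $\overline{B}^{n_i}_{\varepsilon}(x_{m,i})$ stay pairwise disjoint with centers in $K_m$. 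For the weight, I use the same inclusion trick as in Lemma \ref{lem3.1}: squeezing $\overline{B}^{n_i}_{\varepsilon-\eta}(x_i)$ between balls centered at $x_{m,i}$, using weak convergence $\vartheta_m\to\vartheta$ (with $\liminf$ for open balls / $\limsup$ for closed balls on compact sets of continuity), the boundedness of the relevant measure values to get $\Psi_{q-q_m}\to 1$, and finally the strictness $\Psi_q(\vartheta(\overline{B}^{n_i}_{\varepsilon}(x_i)))e^{-tn_i}>s_i$ to absorb the errors. Hence $\sum_i \Psi_{q_m}(\vartheta_m(\overline{B}^{n_i}_{\varepsilon}(x_{m,i})))e^{-tn_i}>c$ for $m$ large, so $\overline{\mathscr{P}}_{N,\varepsilon,\vartheta_m}^{q_m,t}(K_m)>c$.

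The main obstacle, as in Lemma \ref{lem3.1}, is handling the discontinuity of $\vartheta\mapsto\vartheta(B)$ under weak convergence together with the discontinuity of $(q,\vartheta)\mapsto\Psi_q(\vartheta(B))$ at measure-zero balls when $q<0$: one must carefully interleave open and closed Bowen balls at slightly perturbed radii so that the Portmanteau inequalities point the right way, and one must exploit that only finitely many balls are involved and that all inequalities in the definition of $G$ are strict, leaving room for the perturbation errors. A secondary point is ensuring that shrinking the radii to guarantee disjointness of the perturbed closed balls does not destroy the weight bound — this is where the strict inequality $s_1+\dots+s_k>c$ (rather than $\geq$) and the freedom to choose each $s_i$ slightly below its target are used. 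Once these estimates are assembled, letting $m\to\infty$ is not needed; it suffices that the bound holds for all sufficiently large $m$, which establishes that $G$ is open.
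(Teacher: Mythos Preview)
Your proposal is correct and follows essentially the same route as the paper: the paper writes down the finite-witness set $G$ (pairwise disjoint Bowen balls with centers in $K$ and strict weight inequalities summing to more than $c$), observes that $G$ coincides with the superlevel set, and then declares the remainder ``very similar to the proof of Lemma~\ref{lem3.1}'' without further details. Your outline fills in precisely those details; the only cosmetic difference is that you argue directly that $G$ is open by perturbing a fixed witness along a convergent sequence, whereas Lemma~\ref{lem3.1} phrases the argument as ``the complement $F$ is sequentially closed'' --- these are contrapositives of one another and use the same Portmanteau/$\Psi_{q-q_m}\to 1$ mechanism.
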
 
\begin{proof}
Let
$$
\begin{aligned}
& G=\Biggl\{(K, \vartheta, q) \in \mathscr{K}\left(Y\right) \times \mathscr{M}\left(Y\right) \times \mathbb{R} \mid \\
& \text { there exist finitely many Bowen balls } B^{n_1}_{\varepsilon}(x_1),\ldots ,B^{n_k}_{\varepsilon}(x_k) \\&
\text{ with } x_i\in Y,\ n_i\geq N \text{ and }
s_1,\ldots ,s_k \text{ with } s_1+\ldots+s_k>c\text{ that satisfy }\\
&\text { (i) } B^{n_i}_{\varepsilon}(x_i)\cap B^{n_j}_{\varepsilon}(x_j)=\emptyset \ \text { for } i \neq j, \\
&\text { (ii) } \Psi_{q}(\vartheta(B^{n_i}_{\varepsilon}(x_i)))e^{-tn_i}>s_{i} \text { for all } i=1, \ldots, k \Biggl\}.
\end{aligned}
$$

It is easily seen that

$$
\left\{(K, \vartheta, q) \in \mathscr{K}\left(Y\right) \times \mathscr{M}\left(Y\right) \times \mathbb{R} \mid \overline{\mathscr{P}}_{N, \varepsilon, \vartheta}^{q, t}(K)>c\right\}=G.
$$
Then the proof is very similar to the proof of Lemma \ref{lem3.1} and is therefore omitted.    
\end{proof}

\begin{lem}\label{lemm4.2}
Let $E \subseteq Y, \vartheta \in \mathscr{M}\left(Y\right), \varepsilon>0$ and $q \in \mathbb{R}$.
\begin{enumerate}
\item $\mathscr{E}_{\vartheta, q}^{P}(f, E, \varepsilon)=\inf _{E \subseteq \bigcup_{i=1}^{\infty} E_{i}} \sup _{i} \Delta_{\vartheta, q}^{P}(f, E_i, \varepsilon)$.
\item If $0<q$ and $\vartheta \in \mathscr{M}_{0}\left(Y\right)$, then $\Delta_{\vartheta, q}^{P}(f, E, \varepsilon)=\Delta_{\vartheta, q}^{P}(f, \bar{E}, \varepsilon)$. 
\item If $q \leq 0$, then $\Delta_{\vartheta, q}^{P}(f, E, \varepsilon)=\Delta_{\vartheta, q}^{P}(f, \bar{E}, \varepsilon)$.
\end{enumerate}  
\end{lem}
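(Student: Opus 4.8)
The plan is to handle the three parts separately, since (1) is pure bookkeeping with the jump characterisations while (2) and (3) require perturbing packings.

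\emph{Part (1).} I would read off both inequalities from the formula $\mathscr{P}_{\varepsilon,\vartheta}^{q,t}(E)=\inf_{E\subseteq\bigcup_iE_i}\sum_i\overline{\mathscr{P}}_{\varepsilon,\vartheta}^{q,t}(E_i)$ together with the fact that $\Delta_{\vartheta,q}^{P}(f,\cdot,\varepsilon)$ (resp. $\mathscr{E}_{\vartheta,q}^{P}(f,\cdot,\varepsilon)$) is the value of $t$ at which $\overline{\mathscr{P}}_{\varepsilon,\vartheta}^{q,t}$ (resp. $\mathscr{P}_{\varepsilon,\vartheta}^{q,t}$) drops from $\infty$ to $0$. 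For ``$\le$'': fix a countable cover $E\subseteq\bigcup_iE_i$ and any $t>\sup_i\Delta_{\vartheta,q}^{P}(f,E_i,\varepsilon)$; then $\overline{\mathscr{P}}_{\varepsilon,\vartheta}^{q,t}(E_i)=0$ for every $i$, so $\mathscr{P}_{\varepsilon,\vartheta}^{q,t}(E)\le\sum_i\overline{\mathscr{P}}_{\varepsilon,\vartheta}^{q,t}(E_i)=0$, hence $\mathscr{E}_{\vartheta,q}^{P}(f,E,\varepsilon)\le t$; letting $t\downarrow\sup_i\Delta_{\vartheta,q}^{P}(f,E_i,\varepsilon)$ and then taking the infimum over covers gives the bound. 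For ``$\ge$'': if $t>\mathscr{E}_{\vartheta,q}^{P}(f,E,\varepsilon)$ then $\mathscr{P}_{\varepsilon,\vartheta}^{q,t}(E)=0$, so the defining infimum furnishes a cover with $\sum_i\overline{\mathscr{P}}_{\varepsilon,\vartheta}^{q,t}(E_i)<\infty$; then each $\overline{\mathscr{P}}_{\varepsilon,\vartheta}^{q,t}(E_i)<\infty$ forces $\Delta_{\vartheta,q}^{P}(f,E_i,\varepsilon)\le t$, so $\inf_{\text{covers}}\sup_i\Delta_{\vartheta,q}^{P}(f,E_i,\varepsilon)\le t$, and $t\downarrow\mathscr{E}_{\vartheta,q}^{P}(f,E,\varepsilon)$ closes it.

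\emph{Parts (2) and (3).} The inclusion $\Delta_{\vartheta,q}^{P}(f,E,\varepsilon)\le\Delta_{\vartheta,q}^{P}(f,\bar E,\varepsilon)$ is immediate: since $E\subseteq\bar E$, every centered $(N,\varepsilon)$-packing of $E$ is one of $\bar E$, so $\overline{\mathscr{P}}_{N,\varepsilon,\vartheta}^{q,t}(E)\le\overline{\mathscr{P}}_{N,\varepsilon,\vartheta}^{q,t}(\bar E)$ and the jump points are ordered. For the reverse inclusion I would: (i) reduce to finite packings, since all summands are non-negative so $\overline{\mathscr{P}}_{N,\varepsilon,\vartheta}^{q,t}$ is the supremum over finite subpackings; (ii) given a finite centered $(N,\varepsilon)$-packing $\{\overline B_\varepsilon^{n_i}(x_i)\}_{i=1}^{k}$ of $\bar E$, pick $x_i'\in E$ with $\rho_{n_i}(x_i,x_i')<\delta$, which is possible because $\rho$-approximation of $x_i\in\bar E$ by points of $E$ yields $\rho_{n_i}$-approximation by continuity of $f$; (iii) check that $\{\overline B_\varepsilon^{n_i}(x_i')\}_{i=1}^{k}$ is still a packing of $E$: as $Y$ is compact the closed Bowen balls are compact, the finitely many disjoint $\overline B_\varepsilon^{n_i}(x_i)$ have a common positive $\rho$-separation $\gamma$, and a short compactness argument shows $\overline B_\varepsilon^{n_i}(x_i')\subseteq\overline B_{\varepsilon+\delta}^{n_i}(x_i)$ lies in the $\tfrac{\gamma}{3}$-neighbourhood of $\overline B_\varepsilon^{n_i}(x_i)$ once $\delta$ is small, so disjointness survives.

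It then remains to compare $\Psi_q(\vartheta(B_\varepsilon^{n_i}(x_i')))$ with $\Psi_q(\vartheta(B_\varepsilon^{n_i}(x_i)))$, using the sandwich $B_{\varepsilon-\delta}^{n_i}(x_i)\subseteq B_\varepsilon^{n_i}(x_i')\subseteq B_{\varepsilon+\delta}^{n_i}(x_i)$ and the continuity of $\vartheta$ along $B_{\varepsilon-\delta}^{n_i}(x_i)\nearrow B_\varepsilon^{n_i}(x_i)$ and $B_{\varepsilon+\delta}^{n_i}(x_i)\searrow\overline B_\varepsilon^{n_i}(x_i)$ as $\delta\downarrow 0$. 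When $0<q$ and $\vartheta\in\mathscr{M}_0(Y)$, $\Psi_q$ is increasing, so I would use the inner inclusion and the entropy doubling condition $D_\vartheta(\varepsilon)<\infty$ to bound $\vartheta(B_\varepsilon^{n_i}(x_i))/\vartheta(B_{\varepsilon-\delta}^{n_i}(x_i))$ (equivalently, choose $\delta$ so small that $\vartheta(B_{\varepsilon-\delta}^{n_i}(x_i))\ge\tfrac12\vartheta(B_\varepsilon^{n_i}(x_i))$ for each of the finitely many $i$), giving $\Psi_q(\vartheta(B_\varepsilon^{n_i}(x_i')))\ge c\,\Psi_q(\vartheta(B_\varepsilon^{n_i}(x_i)))$ with $c>0$ depending only on $D_\vartheta(\varepsilon)$ and $q$. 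When $q\le 0$, $\Psi_q$ is non-increasing, so I would use the outer inclusion, $\Psi_q(\vartheta(B_\varepsilon^{n_i}(x_i')))\ge\Psi_q(\vartheta(B_{\varepsilon+\delta}^{n_i}(x_i)))$, and control the loss from passing to the closed ball by inner/outer regularity of $\vartheta$, again with $\delta$ chosen uniformly in $i$. Either way one gets, with a constant $c>0$ independent of $t$, $\sum_i\Psi_q(\vartheta(B_\varepsilon^{n_i}(x_i')))e^{-tn_i}\ge c\sum_i\Psi_q(\vartheta(B_\varepsilon^{n_i}(x_i)))e^{-tn_i}$; taking the supremum over packings of $\bar E$ and then $N\to\infty$ yields $c\,\overline{\mathscr{P}}_{\varepsilon,\vartheta}^{q,t}(\bar E)\le\overline{\mathscr{P}}_{\varepsilon,\vartheta}^{q,t}(E)$, and since $c>0$ the two quantities have the same critical exponent in $t$, i.e. $\Delta_{\vartheta,q}^{P}(f,\bar E,\varepsilon)\le\Delta_{\vartheta,q}^{P}(f,E,\varepsilon)$.

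The step I expect to be the main obstacle is precisely this weight comparison under perturbation: the single parameter $\delta$ must be small enough to keep the balls pairwise disjoint (a geometric constraint relying on compactness of $Y$) and simultaneously small enough that $\vartheta(B_\varepsilon^{n_i}(x_i'))$ stays within a fixed factor of $\vartheta(B_\varepsilon^{n_i}(x_i))$; reconciling these two requirements is exactly where the split into $q\le 0$ and $0<q$, and the entropy doubling hypothesis in the latter case, become unavoidable, and making the $q\le 0$ estimate near the boundary sphere precise is the delicate point.
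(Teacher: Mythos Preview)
Your part~(1) matches the paper's argument.

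For (2)--(3) your strategy departs from the paper's, and the departure is exactly where your flagged ``delicate point'' becomes an actual gap. You keep the radius $\varepsilon$ fixed and perturb only the centers; for $q\le 0$ this forces you to bound $\vartheta(B_\varepsilon^{n_i}(x_i'))$ from above via the outer inclusion, i.e.\ by $\vartheta(B_{\varepsilon+\delta}^{n_i}(x_i))$, and as $\delta\downarrow 0$ this tends to $\vartheta(\overline B_\varepsilon^{n_i}(x_i))$, not to $\vartheta(B_\varepsilon^{n_i}(x_i))$. If the Bowen sphere carries mass, the ratio $\vartheta(\overline B_\varepsilon^{n_i}(x_i))/\vartheta(B_\varepsilon^{n_i}(x_i))$ can be arbitrarily large, so no uniform constant $c>0$ is available; inner/outer regularity of $\vartheta$ says nothing about this ratio. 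Since $q\le 0$ is precisely the case in which you have \emph{not} assumed doubling, the estimate cannot be rescued as written.

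The paper's device is to shrink the radius instead of keeping it: for $x_i\in\bar E$ it picks $y_i\in B_\varepsilon^{n_i}(x_i)\cap E$ and passes to the half-radius ball $B_{\varepsilon/2}^{n_i}(y_i)$. The containment $B_{\varepsilon/2}^{n_i}(y_i)\subseteq B_\varepsilon^{n_i}(x_i)$ makes the new family automatically disjoint (so your separation/compactness argument is unnecessary) and gives $\vartheta(B_{\varepsilon/2}^{n_i}(y_i))\le\vartheta(B_\varepsilon^{n_i}(x_i))$ directly; hence for $q\le 0$ one obtains $\Psi_q(\vartheta(B_{\varepsilon/2}^{n_i}(y_i)))\ge\Psi_q(\vartheta(B_\varepsilon^{n_i}(x_i)))$ with constant $1$ and no boundary-sphere issue, while for $q>0$ the doubling hypothesis handles the radius change. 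The paper then records the chain $\overline{\mathscr{P}}_{\varepsilon,\vartheta}^{q,t}(E)\le\overline{\mathscr{P}}_{\varepsilon,\vartheta}^{q,t}(\bar E)\le c_0\,\overline{\mathscr{P}}_{\varepsilon/2,\vartheta}^{q,t}(E)$. The idea you are missing is to let the radius, not just the center, absorb the perturbation.
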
 
\begin{proof}\noindent
\begin{enumerate}
\item  Let $E \subseteq \bigcup_{i=1}^{\infty} E_{i}$. According to the monotonicity and countable stability of $\mathscr{E}_{\vartheta, q}^{P}$, we have that $$\mathscr{E}_{\vartheta, q}^{P}(f, E, \varepsilon) \leq \sup _{i} \mathscr{E}_{\vartheta, q}^{P}(f, E_i, \varepsilon) \leq \sup _{i} \Delta_{\vartheta, q}^{P}(f, E_i, \varepsilon)$$ for all coverings $\left(E_{i}\right)_{i}$ of $E$, whence $$\mathscr{E}_{\vartheta, q}^{P}(f, E, \varepsilon) \leq \inf _{E \subseteq \cup_{i=1}^{\infty} E_{i}} \sup _{i} \Delta_{\vartheta, q}^{P}(f, E_i, \varepsilon).$$

Let $s>\mathscr{E}_{\vartheta, q}^{P}(f, E, \varepsilon)$. Then $0=\mathscr{P}_{\varepsilon, \vartheta}^{q, s}(E)=\inf_{E\subseteq \bigcup_{i}E_i} \sum_i \overline{\mathscr{P}}_{\varepsilon, \vartheta}^{q, s}(E_i)$, so that $E \subseteq \bigcup_{i=1}^{\infty} E_{i}$ for a countable family of sets $E_{i}$ with $\overline{\mathscr{P}}_{\varepsilon, \vartheta}^{q, s}(E_i)<\infty$. Hence $\Delta_{\vartheta, q}^{P}(f, E_i, \varepsilon) \leq s$ for all $i$, whence $$\inf _{E \subseteq \cup_{i=1}^{\infty} E_{i}} \sup _{i} \Delta_{\vartheta, q}^{P}(f, E_i, \varepsilon) \leq \sup _{i} \Delta_{\vartheta, q}^{P}(f, E_i, \varepsilon) \leq s,$$ for all $s>\mathscr{E}_{\vartheta, q}^{P}(f, E, \varepsilon)$, i.e.
$$\mathscr{E}_{\vartheta, q}^{P}(f, E, \varepsilon) \geq \inf _{E \subseteq \cup_{i=1}^{\infty} E_{i}} \sup _{i} \Delta_{\vartheta, q}^{P}(f, E_i, \varepsilon).$$

\item Since $\vartheta \in \mathscr{M}_{0}(Y)$, there exists a constant $c_0>0$ such that
$$
\left(\frac{\vartheta\left(B^{n}_{\varepsilon}(x)\right)}{\vartheta\left(B^{n}_{\frac{\varepsilon}{2}}(y)\right)}\right)^{q} \leq c_0 \quad \text { for } x,y  \in Y \text { with } y\in B^{n}_{\frac{\varepsilon}{2}}(x)\text { and } \varepsilon>0.
$$
 Fix $E \subseteq Y$ and let $\varepsilon>0, N\in\mathbb{N}$, $\left(B^{n_i}_{\varepsilon}(x_{i})\right)_{i}$ be
a centered $(N, \varepsilon)$-packing of $\bar{E}$. For each $i$, choose $y_{i} \in B^{n_i}_{\varepsilon}(x_{i}) \cap E$, and observe that $B^{n_i}_{\frac{\varepsilon}{2}}(y_{i}) \subseteq B^{n_i}_{\varepsilon}(x_{i})$. Also, it is clear that $\left(B^{n_i}_{\frac{\varepsilon}{2}}(y_{i})\right)_{i}$ is a centered $(N, \frac{\varepsilon}{2})$-packing of $E$. We therefore infer that 

\begin{align}\label{aaaaaa}
\begin{split}
\sum_{i} \Psi_{q}(\vartheta(B^{n_i}_{\varepsilon}(x_{i})))e^{-tn_i} \leq c_0\sum_{i} \Psi_{q}(\vartheta(B^{n_i}_{\frac{\varepsilon}{2}}(y_{i})))e^{-tn_i}
\end{split}
\end{align}
and so $\overline{\mathscr{P}}_{N, \varepsilon, \vartheta}^{q, t}(\bar{E}) \leq c_{0} \overline{\mathscr{P}}_{N, \frac{\varepsilon}{2}, \vartheta}^{q, t}(E)$. Letting $N \rightarrow \infty$ now yields $$\overline{\mathscr{P}}_{\varepsilon, \vartheta}^{q, t}(E)\leq \overline{\mathscr{P}}_{\varepsilon, \vartheta}^{q, t}(\bar{E}) \leq c_{0} \overline{\mathscr{P}}_{\frac{\varepsilon}{2}, \vartheta}^{q, t}(E).$$

\item The proof for $q < 0$ is very similar to that for $q \geq 0$. We just need to notice that, for $q < 0$,
$$\Psi_{q}(\vartheta(B^{n_i}_{\varepsilon}(x_{i}))) = (\vartheta(B^{n_i}_{\varepsilon}(x_{i})))^q \leq (\vartheta(B^{n_i}_{\frac{\varepsilon}{2}}(y_{i})))^q = \Psi_{q}(\vartheta(B^{n_i}_{\frac{\varepsilon}{2}}(y_{i})))$$
when $B^{n_i}_{\frac{\varepsilon}{2}}(y_{i}) \subseteq B^{n_i}_{\varepsilon}(x_{i})$.  
Hence, the proof is omitted.
\end{enumerate}
\end{proof}

\begin{lem}\label{lemmm4.3}
Let $\Xi=\left(\mathscr{K}\left(Y\right) \times \mathscr{M}\left(Y\right) \times(-\infty, 0]\right) \cup\left(\mathscr{K}\left(Y\right) \times \mathscr{M}_{0}\left(Y\right) \times \mathbb{R}\right) $ and
$\varepsilon>0$. Then for $(K, \vartheta, q) \in \Xi,$ we have
$$
\mathscr{E}_{\vartheta, q}^{P}(f, K, \varepsilon)=\inf _{\substack{K \subseteq \bigcup_{i=1}^{\infty} K_{i} \\ K_{i} \text { compact }}} \sup _{i} \Delta_{\vartheta, q}^{P}(f, K_i, \varepsilon).
$$
\end{lem}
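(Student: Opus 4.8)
The statement is a "compact exhaustion" refinement of part (1) of Lemma \ref{lemm4.2}, which already gives $\mathscr{E}_{\vartheta, q}^{P}(f, K, \varepsilon)=\inf_{K\subseteq\bigcup_i K_i}\sup_i\Delta_{\vartheta,q}^{P}(f,K_i,\varepsilon)$ over \emph{arbitrary} countable covers. So the plan is to show that restricting to covers by \emph{compact} subsets does not change the infimum. One inequality is free: every compact cover is in particular a cover, so $\inf$ over compact covers $\geq\inf$ over all covers $=\mathscr{E}_{\vartheta,q}^{P}(f,K,\varepsilon)$. The work is the reverse inequality.

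For the reverse inequality, fix $s>\mathscr{E}_{\vartheta,q}^{P}(f,K,\varepsilon)$. Then $\mathscr{P}_{\varepsilon,\vartheta}^{q,s}(K)=0$, so by definition of $\mathscr{P}_{\varepsilon,\vartheta}^{q,s}$ there is a countable cover $K\subseteq\bigcup_i E_i$ with $\sum_i\overline{\mathscr{P}}_{\varepsilon,\vartheta}^{q,s}(E_i)<\infty$; in particular $\overline{\mathscr{P}}_{\varepsilon,\vartheta}^{q,s}(E_i)<\infty$, hence $\Delta_{\vartheta,q}^{P}(f,E_i,\varepsilon)\leq s$ for each $i$. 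I would now replace each $E_i$ by its closure $\overline{E_i}$. By Lemma \ref{lemm4.2}(2)--(3) (valid since $(K,\vartheta,q)\in\Xi$, i.e. either $q\leq 0$ or $\vartheta\in\mathscr{M}_0(Y)$), we have $\Delta_{\vartheta,q}^{P}(f,\overline{E_i},\varepsilon)=\Delta_{\vartheta,q}^{P}(f,E_i,\varepsilon)\leq s$. Moreover $K\cap\overline{E_i}$ is a closed subset of the compact space $Y$, hence compact, and $\{K\cap\overline{E_i}\}_i$ is a countable \emph{compact} cover of $K$ (intersecting with $K$ only shrinks the sets, so by monotonicity $\Delta_{\vartheta,q}^{P}(f,K\cap\overline{E_i},\varepsilon)\leq\Delta_{\vartheta,q}^{P}(f,\overline{E_i},\varepsilon)\leq s$). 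Therefore
$$
\inf_{\substack{K\subseteq\bigcup_i K_i\\ K_i\text{ compact}}}\sup_i\Delta_{\vartheta,q}^{P}(f,K_i,\varepsilon)\leq\sup_i\Delta_{\vartheta,q}^{P}(f,K\cap\overline{E_i},\varepsilon)\leq s.
$$
Letting $s\downarrow\mathscr{E}_{\vartheta,q}^{P}(f,K,\varepsilon)$ gives the reverse inequality, and combined with the trivial direction this proves the claimed identity.

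The only genuine subtlety — and the step I would flag as the crux — is the passage from $E_i$ to $\overline{E_i}$: this is exactly where the hypothesis $(K,\vartheta,q)\in\Xi$ is needed, because the identity $\Delta_{\vartheta,q}^{P}(f,E,\varepsilon)=\Delta_{\vartheta,q}^{P}(f,\overline{E},\varepsilon)$ is not true for unrestricted $(q,\vartheta)$; it relies on the entropy doubling condition when $q>0$ (Lemma \ref{lemm4.2}(2)) and on the monotonicity of $\Psi_q$ in the opposite direction when $q\leq 0$ (Lemma \ref{lemm4.2}(3)). Everything else is bookkeeping: compactness of closed subsets of $Y$, monotonicity of $\Delta_{\vartheta,q}^{P}$ under inclusion, and taking the infimum over $s$. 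I would also double-check that $\Delta_{\vartheta,q}^{P}(f,\cdot,\varepsilon)$ is indeed monotone under set inclusion — this is immediate from the fact that $\overline{\mathscr{P}}_{\varepsilon,\vartheta}^{q,t}$ is monotone (a centered packing of a subset is a centered packing of the larger set), so the critical exponent cannot increase when the set shrinks.
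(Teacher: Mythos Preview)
Your proposal is correct and takes exactly the approach the paper intends: the paper's own proof is the single line ``Follows easily from Lemma~\ref{lemm4.2},'' and what you have written is precisely the natural unpacking of that sentence --- Lemma~\ref{lemm4.2}(1) gives the formula over arbitrary covers, and Lemma~\ref{lemm4.2}(2)--(3) (which is where the hypothesis $(K,\vartheta,q)\in\Xi$ enters) lets one replace each $E_i$ by its closure without changing $\Delta_{\vartheta,q}^{P}$, yielding a compact cover. One minor simplification: since $Y$ is compact, the closures $\overline{E_i}$ are already compact and cover $K$, so the intersection with $K$ is unnecessary (though harmless).
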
 
\begin{proof}
Follows easily from Lemma \ref{lemm4.2}.
\end{proof}


\begin{lem}\label{lemm4.4}
Let $a, c \in \mathbb{R}, \varepsilon>0$ and $\Xi=\left(\mathscr{K}\left(Y\right) \times \mathscr{M}\left(Y\right) \times(-\infty, 0]\right) \cup\left(\mathscr{K}\left(Y\right) \times \mathscr{M}_{0}\left(Y\right) \times \mathbb{R}\right).$ Then
\begin{align*}
& \Big\{(K, \vartheta, q) \in \Xi \mid \mathscr{E}_{\vartheta, q}^{P}(f, K, \varepsilon) \geq c\Big\} \\
& \quad=\Biggl\{(K, \vartheta, q) \in \Xi \mid \text { for all } a<c \text { there exists a subset } E \subseteq K \text { such that }
\end{align*}
\begin{enumerate}
\item $E$ is compact and non-empty.
\item if $U \subseteq Y$ is open and $E \cap U \neq \emptyset$, then $\Delta_{\vartheta, q}^{P}(f, E \cap \bar{U}, \varepsilon) \geq a\Biggl\}$.
\end{enumerate}
\end{lem}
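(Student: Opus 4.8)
The plan is to prove the set equality by establishing both inclusions, using the characterization of $\mathscr{E}_{\vartheta,q}^{P}(f,K,\varepsilon)$ as an infimum over countable compact covers from Lemma \ref{lemmm4.3}, together with the closure-invariance of $\Delta_{\vartheta,q}^{P}$ on $\Xi$ from Lemma \ref{lemm4.2}(2)--(3). Throughout, fix $(K,\vartheta,q)\in\Xi$.

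\medskip\noindent\textbf{The inclusion $\supseteq$.} Suppose $(K,\vartheta,q)$ lies in the right-hand set, and let $a<\mathscr{E}_{\vartheta,q}^{P}(f,K,\varepsilon)$ be arbitrary; I must derive a contradiction from assuming there is no such $E\subseteq K$. Actually it is cleaner to argue the contrapositive of the \emph{complement}: I will show that if $c>\mathscr{E}_{\vartheta,q}^{P}(f,K,\varepsilon)$ then the right-hand condition fails. If $c>\mathscr{E}_{\vartheta,q}^{P}(f,K,\varepsilon)$, pick $a$ with $\mathscr{E}_{\vartheta,q}^{P}(f,K,\varepsilon)<a<c$; by Lemma \ref{lemmm4.3} there is a countable cover $K\subseteq\bigcup_i K_i$ by compact sets with $\sup_i\Delta_{\vartheta,q}^{P}(f,K_i,\varepsilon)<a$. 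Now suppose $E\subseteq K$ is compact, nonempty, and satisfies (2) for this $a$. Since $E=\bigcup_i(E\cap K_i)$ and each $E\cap K_i$ is a relatively closed subset of $E$, by the Baire category theorem (applied in the Polish, hence completely metrizable, space $E$) some $E\cap K_{i_0}$ has nonempty interior in $E$; that is, there is an open $U\subseteq Y$ with $E\cap U\neq\emptyset$ and $E\cap U\subseteq K_{i_0}$, hence $E\cap\bar U\subseteq \overline{K_{i_0}}=K_{i_0}$. Then monotonicity of $\Delta_{\vartheta,q}^{P}$ gives $\Delta_{\vartheta,q}^{P}(f,E\cap\bar U,\varepsilon)\le\Delta_{\vartheta,q}^{P}(f,K_{i_0},\varepsilon)<a$, contradicting (2). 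So no such $E$ exists, i.e. the right-hand condition fails for this particular $a<c$, and $(K,\vartheta,q)$ is not in the right-hand set. This proves $\supseteq$.

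\medskip\noindent\textbf{The inclusion $\subseteq$.} Assume $\mathscr{E}_{\vartheta,q}^{P}(f,K,\varepsilon)\ge c$ and fix $a<c$; I must construct a compact nonempty $E\subseteq K$ with property (2). The construction is the standard ``residual/fat'' set argument from the theory of packing-type entropies. Let $\{U_j\}_{j\ge1}$ be a countable base for the topology of $Y$. Define $E$ to be $K$ minus the union, over all $j$ with $\Delta_{\vartheta,q}^{P}(f,K\cap\bar U_j,\varepsilon)<a$, of the open sets $U_j$; more precisely set $E=K\setminus\bigcup\{U_j : \Delta_{\vartheta,q}^{P}(f,K\cap\bar U_j,\varepsilon)<a\}$, which is a closed subset of the compact set $K$, hence compact. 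The verification splits into two points. First, property (2): if $U$ is open with $E\cap U\neq\emptyset$, pick a basic $U_j\subseteq U$ meeting $E$; then $U_j$ was \emph{not} removed, so $\Delta_{\vartheta,q}^{P}(f,K\cap\bar U_j,\varepsilon)\ge a$, and since $K\cap\bar U_j\subseteq K\cap\bar U$ has $\overline{K\cap U_j}\subseteq K\cap\bar U$, closure-invariance (Lemma \ref{lemm4.2}(2)--(3), valid because $(K,\vartheta,q)\in\Xi$) together with monotonicity gives $\Delta_{\vartheta,q}^{P}(f,E\cap\bar U,\varepsilon)\ge\Delta_{\vartheta,q}^{P}(f,K\cap\bar U_j,\varepsilon)\ge a$ --- wait, one must be careful that $E\cap\bar U\supseteq$ something with $\Delta\ge a$; here one uses that $\Delta_{\vartheta,q}^{P}(f,\overline{E\cap U_j},\varepsilon)=\Delta_{\vartheta,q}^{P}(f,E\cap U_j,\varepsilon)$ and compares $\overline{E\cap U_j}\subseteq E\cap\bar U$, deducing $\Delta_{\vartheta,q}^{P}(f,E\cap\bar U,\varepsilon)\ge\Delta_{\vartheta,q}^{P}(f,E\cap U_j,\varepsilon)$; finally one needs $\Delta_{\vartheta,q}^{P}(f,E\cap U_j,\varepsilon)\ge a$, which follows from the next point applied with $K$ replaced appropriately. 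Second, and this is the crux, $E$ is nonempty with $\Delta_{\vartheta,q}^{P}(f,E\cap\bar U_j,\varepsilon)\ge a$ whenever $E\cap U_j\ne\emptyset$: the removed sets $U_j$ contribute, by countable subadditivity of $\mathscr{P}_{\varepsilon,\vartheta}^{q,t}$ and the definitions, a set $K\setminus E$ with $\mathscr{E}_{\vartheta,q}^{P}(f,K\setminus E,\varepsilon)\le a<c\le\mathscr{E}_{\vartheta,q}^{P}(f,K,\varepsilon)$, so by $\sigma$-stability $\mathscr{E}_{\vartheta,q}^{P}(f,E,\varepsilon)=\mathscr{E}_{\vartheta,q}^{P}(f,K,\varepsilon)\ge c>a$; in particular $E\ne\emptyset$, and the same localized argument applied to any $E\cap\bar U_j$ meeting $E$ shows its $\Delta$-value is at least $a$.

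\medskip\noindent\textbf{Main obstacle.} The delicate point is the second half of the $\subseteq$ inclusion: showing that discarding the ``thin'' basic open sets does not lower the packing entropy below $c$, i.e. that $\mathscr{E}_{\vartheta,q}^{P}(f,K\setminus E,\varepsilon)\le a$. This requires passing from the local hypothesis $\Delta_{\vartheta,q}^{P}(f,K\cap\bar U_j,\varepsilon)<a$ on each removed piece to a global bound on the countable union, which is exactly where the definition of $\mathscr{P}_{\varepsilon,\vartheta}^{q,t}$ via $\inf\sum\overline{\mathscr{P}}_{\varepsilon,\vartheta}^{q,t}$-covers and its countable subadditivity are used; one must also check that $\Delta_{\vartheta,q}^{P}(f,\cdot,\varepsilon)<a$ for a set $A$ is equivalent to $\overline{\mathscr{P}}_{\varepsilon,\vartheta}^{q,s}(A)=0$ for some $s<a$, so that the countably many exponents $s_j$ can be simultaneously dominated by picking one below $a$ --- a point where working at a fixed $s\in(a',a)$ with $a'<a$ and using monotonicity in $t$ is the clean fix. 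The rest of the argument is the routine combination of Baire category in $E$, closure-invariance on $\Xi$, monotonicity, and $\sigma$-stability of $\mathscr{E}_{\vartheta,q}^{P}(f,\cdot,\varepsilon)$.
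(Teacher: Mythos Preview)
Your argument for the inclusion $\supseteq$ is essentially the paper's Baire category argument, with one slip: from $E\cap U\subseteq K_{i_0}$ you conclude $E\cap\bar U\subseteq K_{i_0}$, but this is false in general (only $\overline{E\cap U}\subseteq K_{i_0}$ follows, and $E\cap\bar U$ can be strictly larger). The paper's fix is to shrink $U$ to an open $V$ with $E\cap V\neq\emptyset$ and $\bar V\subseteq U$; then $E\cap\bar V\subseteq E\cap U\subseteq K_{i_0}$, and property (2) applied to $V$ gives the contradiction $a\le\Delta_{\vartheta,q}^{P}(f,E\cap\bar V,\varepsilon)\le\Delta_{\vartheta,q}^{P}(f,K_{i_0},\varepsilon)<a$.

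For the inclusion $\subseteq$ there is a genuine gap. Your set $E=K\setminus\bigcup\{U_j:\Delta_{\vartheta,q}^{P}(f,K\cap\bar U_j,\varepsilon)<a\}$ guarantees only that $\Delta_{\vartheta,q}^{P}(f,K\cap\bar U_j,\varepsilon)\ge a$ for each non-removed $U_j$; but property (2) asks for $\Delta_{\vartheta,q}^{P}(f,E\cap\bar U,\varepsilon)\ge a$, with $E$ in place of $K$. Since typically $E\subsetneq K$, you would need to transfer the lower bound from $K\cap\bar U_j$ to $E\cap\bar U_j$. Your ``localized argument'' does not achieve this: writing $K\cap\bar U_j=(E\cap\bar U_j)\cup\big((K\setminus E)\cap\bar U_j\big)$ and noting that the second piece is a countable union of sets with $\Delta<a$ only controls $\mathscr{E}^{P}$ of that piece (by $\sigma$-stability), not its $\Delta$-value; and on the other side you only know $\Delta(K\cap\bar U_j)\ge a$, not $\mathscr{E}^{P}(K\cap\bar U_j)\ge a$. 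Because $\Delta_{\vartheta,q}^{P}$ is not countably stable, the decomposition gives no conclusion about $\Delta(E\cap\bar U_j)$. A single pass of ``remove thin basic sets'' is therefore not enough; one would need to iterate this removal transfinitely, which you do not do.

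The paper bypasses the difficulty entirely by a different, much shorter construction: it takes $\nu=\mathscr{P}_{\varepsilon,\vartheta}^{q,a}\!\restriction_K$ and sets $E=\operatorname{supp}\nu$. Then $E$ is compact, and $\nu(E)=\mathscr{P}_{\varepsilon,\vartheta}^{q,a}(K)>0$ (since $a<c\le\mathscr{E}_{\vartheta,q}^{P}(f,K,\varepsilon)$) forces $E\neq\emptyset$. For any open $U$ meeting $E$ one has $\nu(U)>0$, hence $\mathscr{P}_{\varepsilon,\vartheta}^{q,a}(E\cap U)=\nu(E\cap U)=\nu(U)>0$, so $\mathscr{E}_{\vartheta,q}^{P}(f,E\cap U,\varepsilon)\ge a$ and therefore $\Delta_{\vartheta,q}^{P}(f,E\cap\bar U,\varepsilon)\ge a$. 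This is the missing idea: working with the \emph{measure} $\mathscr{P}_{\varepsilon,\vartheta}^{q,a}$ rather than the pre-entropy $\Delta$ makes the local lower bound automatic on $E$ itself.
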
 
\begin{proof}

Let $\nu$ denote the restriction of $\mathscr{P}_{\varepsilon, \vartheta}^{q, a}$ to $K$, and $E = \operatorname{supp} \nu$. Clearly, $E \subseteq K$, and $E$ compact. Since $$\nu(E) = \nu(Y) = \mathscr{P}_{\varepsilon, \vartheta}^{q, a}(K) > 0,$$ it follows that $E \neq \emptyset$. Furthermore, for any open set $U \subseteq Y$ with $U \cap E \neq \emptyset$, we have $\nu(U) > 0$, whence $\mathscr{P}_{\varepsilon, \vartheta}^{q, a}(U \cap E) = \nu(U \cap E) = \nu(U) > 0$. Consequently, $\mathscr{E}_{\vartheta, q}^{P}(f, U \cap E, \varepsilon) \geq a$.

Now, suppose $\mathscr{E}_{\vartheta, q}^{P}(f, K, \varepsilon) < c$. Choose $a$ such that $\mathscr{E}_{\vartheta, q}^{P}(f, K, \varepsilon) < a < c$. This implies the existence of a non-empty compact subset $E \subseteq K$ satisfying $\Delta_{\vartheta, q}^{P}(f, E \cap \bar{U}, \varepsilon) \geq a$ for every open set $U \subseteq Y$ with $E \cap U \neq \emptyset$. Furthermore, since $\mathscr{E}_{\vartheta, q}^{P}(f, K, \varepsilon) < a$, Lemma \ref{lemmm4.3} shows the existence of compact sets $K_1, K_2, \ldots$ such that $K \subseteq \bigcup_n K_n$ and  
$$
\Delta_{\vartheta, q}^{P}(f, K_n, \varepsilon) < a \quad \text{for all } n.
$$

By $E = \bigcup_n \left(E \cap K_n\right)$ and Baire's Category Theorem, we deduce that there exists an open set $U$ and an integer $m$ such that $\emptyset \neq E \cap U \subseteq E \cap K_m$. We can further select an open set $V$ with $E \cap V \neq \emptyset$ and $\bar{V} \subseteq U$. Then, one has  
$$
a \leq \Delta_{\vartheta, q}^{P}(f, E \cap \bar{V}, \varepsilon) \leq \Delta_{\vartheta, q}^{P}(f, E \cap U, \varepsilon) \leq \Delta_{\vartheta, q}^{P}(f, E \cap K_m, \varepsilon) \leq \Delta_{\vartheta, q}^{P}(f, K_m, \varepsilon) < a,
$$
which yields the desired contradiction.

\end{proof} 

Next, we prepare to prove the measurability of $(q, \vartheta)$-packing topological entropy

\begin{thm}\label{thm4.11}
Let $t \in \mathbb{R}$, $N\in \N$ and $\varepsilon>0$.
\begin{enumerate}
\item $$
\mathscr{K}\left(Y\right) \times \mathscr{M}\left(Y\right) \times \mathbb{R} \rightarrow \overline{\mathbb{R}}:(K, \vartheta, q) \mapsto \overline{\mathscr{P}}_{N, \varepsilon, \vartheta}^{q, t}(K)
$$
is lower semi-continuous; in particular of Baire class 1.
\item The map
$$
\mathscr{K}\left(Y\right) \times \mathscr{M}\left(Y\right) \times \mathbb{R} \rightarrow \overline{\mathbb{R}}:(K, \vartheta, q) \mapsto \overline{\mathscr{P}}_{\varepsilon, \vartheta}^{q, t}(K)
$$
is of Baire class 2.
\end{enumerate}The map
\end{thm}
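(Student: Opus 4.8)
The plan is to prove Theorem \ref{thm4.11} by mirroring the structure used for the Bowen case in Theorem \ref{thm3.4}, exploiting the dual (open sublevel/superlevel) behaviour established in Lemma \ref{lemm4.1}.

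For part (1), I would argue that lower semi-continuity is exactly the statement that, for every $c\in\mathbb{R}$, the set
$$
\left\{(K,\vartheta,q)\in\mathscr{K}(Y)\times\mathscr{M}(Y)\times\mathbb{R}\ \vert\ \overline{\mathscr{P}}_{N,\varepsilon,\vartheta}^{q,t}(K)>c\right\}
$$
is open, which is precisely the content of Lemma \ref{lemm4.1}. Hence the map is lower semi-continuous, and since every lower semi-continuous real-valued function on a metric space is an increasing pointwise limit of continuous functions (equivalently $-\overline{\mathscr{P}}_{N,\varepsilon,\vartheta}^{q,t}(K)$ is upper semi-continuous, hence Baire class $1$), the map is of Baire class $1$. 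This step is essentially immediate once Lemma \ref{lemm4.1} is in hand.

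For part (2), the key observation is the monotonicity in $N$: as $N$ increases, fewer centered $(N,\varepsilon)$-packings are admissible, so $\overline{\mathscr{P}}_{N,\varepsilon,\vartheta}^{q,t}(K)$ is non-increasing in $N$, and by definition
$$
\overline{\mathscr{P}}_{\varepsilon,\vartheta}^{q,t}(K)=\lim_{N\to\infty}\overline{\mathscr{P}}_{N,\varepsilon,\vartheta}^{q,t}(K)=\inf_{N}\overline{\mathscr{P}}_{N,\varepsilon,\vartheta}^{q,t}(K)
$$
for every $(K,\vartheta,q)$. Thus $\overline{\mathscr{P}}_{\varepsilon,\vartheta}^{q,t}$ is a pointwise (decreasing) limit of the Baire class $1$ functions from part (1), and therefore of Baire class $2$. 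Concretely, for $c\in\mathbb{R}$ one writes
$$
\left\{\overline{\mathscr{P}}_{\varepsilon,\vartheta}^{q,t}(K)<c\right\}=\bigcup_{m}\bigcup_{N}\left\{\overline{\mathscr{P}}_{N,\varepsilon,\vartheta}^{q,t}(K)\le c-\tfrac1m\right\},
$$
and each inner set is closed by part (1) (its complement is open by Lemma \ref{lemm4.1}), so the sublevel sets are $\mathscr{F}_{\sigma}$, i.e. $\Sigma_2^0$, placing the map in Baire class $2$ as in the discussion following Definition of Baire classes.

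I expect the only genuine subtlety to be bookkeeping with the extended-real-valued range $\overline{\mathbb{R}}$: one must check that the semi-continuity and Baire-class arguments are unaffected by the possible values $\pm\infty$ (e.g. when $q<0$ and some $\vartheta(B^{n_i}_\varepsilon(x_i))=0$, forcing $\Psi_q=\infty$), which is handled by noting that $\{\overline{\mathscr{P}}_{N,\varepsilon,\vartheta}^{q,t}(K)>c\}$ being open covers the case $c=-\infty$ trivially and the case $\overline{\mathscr{P}}_{N,\varepsilon,\vartheta}^{q,t}(K)=+\infty$ is captured by intersecting over all finite $c$. There is no deeper obstacle: part (1) is a direct appeal to Lemma \ref{lemm4.1}, and part (2) is the standard fact that a monotone pointwise limit of Baire class $1$ functions is Baire class $2$, exactly parallel to the proof of Theorem \ref{thm3.4}(2).
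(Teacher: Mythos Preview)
Your proposal is correct and follows essentially the same approach as the paper: part (1) is deduced directly from Lemma \ref{lemm4.1}, and part (2) from the fact that $\overline{\mathscr{P}}_{\varepsilon,\vartheta}^{q,t}(K)=\lim_{N\to\infty}\overline{\mathscr{P}}_{N,\varepsilon,\vartheta}^{q,t}(K)$ is a pointwise limit of Baire class $1$ maps. Your write-up is simply more detailed than the paper's two-line proof (you spell out the monotonicity in $N$, the explicit $\mathscr{F}_\sigma$ structure, and the extended-real bookkeeping), but the argument is the same.
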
 
\begin{proof}\noindent
\begin{enumerate}
\item This follows immediately from Lemma \ref{lemm4.1}.
\item Since $\overline{\mathscr{P}}_{\varepsilon, \vartheta}^{q, t}(K)=\lim\limits_{N\rightarrow \infty} \overline{\mathscr{P}}_{N, \varepsilon, \vartheta}^{q, t}(K)$ for all $(K, \vartheta, q) \in \mathscr{K}\left(Y\right) \times \mathscr{M}\left(Y\right) \times \mathbb{R}$, then we have that the map $\mathscr{K}\left(Y\right) \times \mathscr{M}\left(Y\right) \times \mathbb{R} \rightarrow \overline{\mathbb{R}}:(K, \vartheta, q) \mapsto \overline{\mathscr{P}}_{\varepsilon, \vartheta}^{q, t}(K)$ is of Baire class 2 .


\end{enumerate}
\end{proof}

\begin{thm}
Write $\Xi=\left(\mathscr{K}\left(Y\right) \times \mathscr{M}\left(Y\right) \times(-\infty, 0]\right) \cup\left(\mathscr{K}\left(Y\right) \times \mathscr{M}_{0}\left(Y\right) \times \mathbb{R}\right)$, one has
\begin{enumerate}
\item for any $\varepsilon>0,$ the map
$$
\mathscr{K}\left(Y\right) \times \mathscr{M}\left(Y\right) \times \mathbb{R} \rightarrow \overline{\mathbb{R}}:(K, \vartheta, q) \mapsto \Delta_{\vartheta, q}^{P}(f, K, \varepsilon)
$$
is of Baire class 2.
\item The maps
$$
\Xi \rightarrow \overline{\mathbb{R}}:(K, \vartheta, q) \mapsto \overline{\mathscr{E}}_{\vartheta, q}^{P}(f, K)
$$
and 
$$\Xi \rightarrow \overline{\mathbb{R}}:(K, \vartheta, q) \mapsto \underline{\mathscr{E}}_{\vartheta, q}^{P}(f, K)$$
are $\sigma(\mathscr{A}(\Xi))$-measurable where $\sigma(\mathscr{A}(\Xi))$ denotes the $\sigma$-algebra generated by the family $\mathscr{A}(\Xi)$ of analytic subsets of $\Xi$.
\end{enumerate}   
\end{thm}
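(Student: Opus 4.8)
The plan is to treat the two claims in sequence, deriving (2) from (1) by the same device already used in Theorem \ref{thmm3.2}(2) for the Bowen case. For claim (1), I would first show that for each $a, c \in \mathbb{R}$ and each $\varepsilon > 0$ the set
$$
\Big\{(K, \vartheta, q) \in \mathscr{K}(Y) \times \mathscr{M}(Y) \times \mathbb{R} \ \vert \ \Delta_{\vartheta, q}^{P}(f, K, \varepsilon) < a \Big\}
$$
is of additive Borel class (a $\mathscr{G}_{\delta\sigma}$ or similar). By the definition of $\Delta_{\vartheta, q}^{P}$ via the critical exponent of $\overline{\mathscr{P}}_{\varepsilon, \vartheta}^{q, t}$, we have $\Delta_{\vartheta, q}^{P}(f, K, \varepsilon) < a$ iff there is a rational $s < a$ with $\overline{\mathscr{P}}_{\varepsilon, \vartheta}^{q, s}(K) = 0$, equivalently $\overline{\mathscr{P}}_{\varepsilon, \vartheta}^{q, s}(K) < 1$. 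Since Theorem \ref{thm4.11}(2) tells us $(K, \vartheta, q) \mapsto \overline{\mathscr{P}}_{\varepsilon, \vartheta}^{q, s}(K)$ is of Baire class $2$, the sublevel set $\{\overline{\mathscr{P}}_{\varepsilon, \vartheta}^{q, s}(K) < 1\}$ lies in $\mathscr{G}_{\delta\sigma}$; taking the countable union over rational $s < a$ keeps us in $\mathscr{G}_{\delta\sigma}$. A symmetric argument using $\{\overline{\mathscr{P}}_{\varepsilon, \vartheta}^{q, s}(K) > 1\}$ (which is $\mathscr{F}_{\sigma\delta}$ by Baire class $2$) handles $\{\Delta_{\vartheta, q}^{P}(f, K, \varepsilon) > a\}$, so that the map $\Delta_{\vartheta, q}^{P}(f, \cdot, \varepsilon)$ is of Baire class $2$.

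For claim (2), the idea is to express $\mathscr{E}_{\vartheta, q}^{P}(f, K, \varepsilon)$ in terms of $\Delta_{\vartheta, q}^{P}$ over compact subcovers via Lemma \ref{lemmm4.3}, and then pass to the limit in $\varepsilon$. First, using Lemma \ref{lemm4.4}, for fixed $\varepsilon > 0$ and $c \in \mathbb{R}$ the set $\{(K, \vartheta, q) \in \Xi \mid \mathscr{E}_{\vartheta, q}^{P}(f, K, \varepsilon) \geq c\}$ is described by an inner condition quantified over compact subsets $E \subseteq K$ and over the countable basis of open sets $U$: for every $a < c$ (it suffices to range over rationals) there is a compact non-empty $E \subseteq K$ with $\Delta_{\vartheta, q}^{P}(f, E \cap \bar{U}, \varepsilon) \geq a$ for all basic open $U$ meeting $E$. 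Introducing the auxiliary space $\mathscr{K}(Y) \times \mathscr{M}(Y) \times \mathbb{R} \times \mathscr{K}(Y)$ with coordinate $(K, \vartheta, q, E)$, the condition "$E \subseteq K$" is closed, and each condition "$\Delta_{\vartheta, q}^{P}(f, E \cap \bar{U}, \varepsilon) \geq a$ or $E \cap U = \emptyset$" is Borel by claim (1) together with continuity of $E \mapsto E \cap \bar{U}$ on the relevant set and the fact that $\{E : E \cap U = \emptyset\}$ is closed; intersecting over the countable family of basic $U$ and projecting onto $\mathscr{K}(Y) \times \mathscr{M}(Y) \times \mathbb{R}$ yields an analytic set. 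Hence $\mathscr{E}_{\vartheta, q}^{P}(f, \cdot, \varepsilon)$ is $\sigma(\mathscr{A})$-measurable for each fixed $\varepsilon$.

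Finally, since $\overline{\mathscr{E}}_{\vartheta, q}^{P}(f, K) = \limsup_{\varepsilon \to 0} \mathscr{E}_{\vartheta, q}^{P}(f, K, \varepsilon)$ and $\underline{\mathscr{E}}_{\vartheta, q}^{P}(f, K) = \liminf_{\varepsilon \to 0} \mathscr{E}_{\vartheta, q}^{P}(f, K, \varepsilon)$, and since $\varepsilon \mapsto \mathscr{E}_{\vartheta, q}^{P}(f, K, \varepsilon)$ is monotone in $\varepsilon$ (noted in Section \ref{Section2}), the limits may be taken along a fixed sequence $\varepsilon_k \to 0$; a countable supremum/infimum of $\sigma(\mathscr{A})$-measurable maps is again $\sigma(\mathscr{A})$-measurable, and restricting to $\Xi$ (an analytic, indeed Borel, subset) preserves this. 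This gives the $\sigma(\mathscr{A}(\Xi))$-measurability of both $\overline{\mathscr{E}}_{\vartheta, q}^{P}(f, \cdot)$ and $\underline{\mathscr{E}}_{\vartheta, q}^{P}(f, \cdot)$ on $\Xi$. The main obstacle I anticipate is step (2): verifying that the reformulation from Lemma \ref{lemm4.4} is genuinely a countable (not uncountable) quantifier combination — this hinges on reducing the "for all open $U$" clause to the countable topological basis and checking that the map $E \mapsto \Delta_{\vartheta, q}^{P}(f, E \cap \bar{U}, \varepsilon)$ has the right descriptive complexity as a function of $E$, which requires a little care since $E \mapsto E \cap \bar{U}$ need not be continuous everywhere on $\mathscr{K}(Y)$. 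One resolves this by working on the open set $\{E : E \cap U \neq \emptyset\}$ where the relevant semicontinuity holds, and absorbing the complementary closed set into the Borel bookkeeping.
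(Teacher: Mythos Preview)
Your proposal is correct and follows essentially the same route as the paper: part (1) is deduced from the Baire class~2 property of $(K,\vartheta,q)\mapsto \overline{\mathscr{P}}_{\varepsilon,\vartheta}^{q,t}(K)$ (Theorem~\ref{thm4.11}) by writing $\{s<\Delta_{\vartheta,q}^{P}<t\}$ as a countable union of sub/superlevel sets of $\overline{\mathscr{P}}_{\varepsilon,\vartheta}^{q,\cdot}$, and part (2) uses Lemma~\ref{lemm4.4} to rewrite $\{\mathscr{E}_{\vartheta,q}^{P}(f,K,\varepsilon)\geq c\}$ as a projection from $\Xi\times\mathscr{K}(Y)$ of a Borel set (closed condition $E\subseteq K$, closed conditions $E\cap U=\emptyset$, Borel conditions on $\Delta_{\vartheta,q}^{P}(f,E\cap\bar U,\varepsilon)$ indexed by a countable basis), yielding an analytic set. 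The only cosmetic differences are that the paper indexes its countable basis by closed Bowen balls $\overline{B}^{n_j}_{\varepsilon}(x_i)$ with $(x_i)$ dense and $(n_j)$ ranging over positive rationals, and that your monotonicity remark for $\varepsilon\mapsto\mathscr{E}_{\vartheta,q}^{P}(f,K,\varepsilon)$ is not actually recorded in Section~\ref{Section2} for general $q$ (the paper simply appeals to the definitions at that final step without further justification).
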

\begin{proof}\noindent
\begin{enumerate}
\item It follows from Theorem \ref{thm4.11} that for $s, t \in \mathbb{R}$ and every $\varepsilon>0,$ consider $(K, \vartheta, q) \in \Xi$ with $s<\Delta_{\vartheta, q}^{P}(f, K, \varepsilon)<t,$ we can see that
\begin{align*}
& \left\{(K, \vartheta, q) \in \Xi  \ \vert \ s<\Delta_{\vartheta, q}^{P}(f, K, \varepsilon)<t\right\}\\
& =\Xi \cap\left(\bigcup_{n}\left\{(K, \vartheta, q) \in \mathscr{K}\left(Y\right) \times \mathscr{M}\left(Y\right) \times \mathbb{R}  \ \vert \ 1<\overline{\mathscr{P}}_{\varepsilon, \vartheta}^{q, s+\frac{1}{n}}(K)\right\}\right. \\
& \left.\quad \cap \bigcup_{n}\left\{(K, \vartheta, q) \in \mathscr{K}\left(Y\right) \times \mathscr{M}\left(Y\right) \times \mathbb{R}  \ \vert \ \overline{\mathscr{P}}_{\varepsilon, \vartheta}^{q, t-\frac{1}{n}}(K)<1\right\}\right) \\
& \quad \in \mathscr{G}_{\delta \sigma}(\Xi).
\end{align*}
It follows that the map is of Baire class 2 .

\item Let $c \in \mathbb{R}$, $\left(x_{i}\right)_{i}$ be a countable dense subset of $Y$ and $\left(n_{i}\right)_{i}$ be an enumeration of the positive rationals. For positive integers $i, j, m$ and any $\varepsilon>0,$ consider
$$
\begin{aligned}
F & =\left\{(K, \vartheta, q, E) \in \mathscr{K}\left(Y\right) \times \mathscr{M}\left(Y\right) \times \mathbb{R} \times \mathscr{K}\left(Y\right) \ \vert \ E \subseteq K\right\}, \\
B_{i j} & =\left\{(K, \vartheta, q, E) \in \mathscr{K}\left(Y\right) \times \mathscr{M}\left(Y\right) \times \mathbb{R} \times \mathscr{K}\left(Y\right) \ \vert \ B^{n_j}_{\varepsilon}(x_i) \cap E=\emptyset\right\}, \\
C_{m,i,j} & = \left\{(K, \vartheta, q, E) \in \mathscr{K}\left(Y\right) \times \mathscr{M}\left(Y\right) \times \mathbb{R} \times \mathscr{K}\left(Y\right) \ \Big| \ \Delta_{\vartheta, q}^{P}\left(f, E \cap \overline{B}^{n_j}_{\varepsilon}(x_i), \varepsilon\right) \geq c-\frac{1}{m}\right\}
\end{aligned}
$$
and define the projection $\pi: \mathscr{K}\left(Y\right) \times \mathscr{M}\left(Y\right) \times \mathbb{R} \times \mathscr{K}\left(Y\right) \rightarrow \mathscr{K}\left(Y\right) \times \mathscr{M}\left(Y\right) \times$ $\mathbb{R}$ by
$$
\pi(K, \vartheta, q, E)=(K, \vartheta, q).
$$
It now follows from Lemma \ref{lemm4.4} that
$$
\begin{aligned}
& \left\{(K, \vartheta, q) \in \Xi \mid \mathscr{E}_{\vartheta, q}^{P}(f, K, \varepsilon) \geq c\right\} \\
& =\bigcap_{m}\Biggl\{(K, \vartheta, q) \in \Xi \ \vert \ \text { there exists a subset } E \subseteq K \text { such that } \\
& \text { (1) } E \text { is compact and non-empty } \\
& \text { (2) if } U \subseteq Y \text { is open and } E \cap U \neq \emptyset, \text { then } \Delta_{\vartheta, q}^{P}(f, E \cap \bar{U}, \varepsilon) \geq c-\frac{1}{m}\Biggl\} \\
& =\bigcap_{m} \pi\Bigg(\bigg\{(K, \vartheta, q, E) \in \Xi \times \mathscr{K}\left(Y\right) \mid E \subseteq K\bigg\} \\
& \quad \cap\bigg\{(K, \vartheta, q, E) \in \Xi \times \mathscr{K}\left(Y\right) \mid \text { if } i, j \in \mathbf{N} \text { with } B^{n_j}_{\varepsilon}(x_i) \cap E \neq \emptyset,\\
&\hspace{2cm}\text { then } \Delta_{\vartheta, q}^{P}(f, E \cap \overline{B}^{n_j}_{\varepsilon}(x_i), \varepsilon) \geq c-\frac{1}{m}\bigg\}\Biggl) \\
& =\Xi \cap \bigcap_{m} \pi\bigg(F \cap \bigcap_{i, j}\left(B_{i j} \cup C_{m,i,j}\right)\bigg) \\
& =\Xi \cap A,
\end{aligned}
$$
where
$$
A=\bigcap_{m} \pi\Big(F \cap \bigcap_{i, j}\left(B_{i j} \cup C_{m,i,j}\right)\Big) \subseteq \mathscr{K}\left(Y\right) \times \mathscr{M}\left(Y\right) \times \mathbb{R}.
$$
The sets $F$ and $B_{i j}$ are closed, and by \cite[Proposition 4.6]{OLMEA}, the set $C_{m,i,j}$ is Borel. Hence, $A$ is an analytic subset of $\mathscr{K}\left(Y\right) \times \mathscr{M}\left(Y\right) \times \mathbb{R}$, and we therefore deduce that $$\{(K, \vartheta, q) \in \Xi \mid \mathscr{E}_{\vartheta, q}^{P}(f, K, \varepsilon) \geq c\}=\Xi \cap A$$ is an analytic subset of $\Xi$. According to the definitions of $\overline{\mathscr{E}}_{\vartheta, q}^{P}(f, K)$ and $\underline{\mathscr{E}}_{\vartheta, q}^{P}(f, K)$, we can see that these maps are $\sigma(\mathscr{A}(\Xi))$-measurable.
\end{enumerate}
\end{proof}

\section{Relation between the local topological and $(q, \vartheta)$-packing topological entropies of the level sets $L_{\beta}$}\label{Section5}
For any $\beta \geq 0$, $q \in \mathbb{R}$ and $\vartheta \in \mathscr{M}\left(Y\right)$, assume that $$\underline{\mathscr{E}}_{\vartheta, q}^{P}\left(f, L_{\beta}\right)=\overline{\mathscr{E}}_{\vartheta, q}^{P}\left(f, L_{\beta}\right)=\mathscr{E}_{\vartheta, q}^{P}\left(f, L_{\beta}\right).$$
We will establish that for any $\beta \geq 0$ and $q \in \mathbb{R}$ one has
\begin{equation}
\mathscr{E}_{top}^{P}\left(f, L_{\beta}\right)=q \beta+\mathscr{E}_{\vartheta, q}^{P}\left(f, L_{\beta}\right),
\end{equation}
the corresponding level set is defined as 
$$
\begin{aligned}
& L_{\beta}=\left\{x  \in Y  \ \vert \ \mathscr{E}_{\vartheta}(f, x)=\beta\right\} \\
& =\left\{x  \in Y \ \Big| \ \lim _{\varepsilon \rightarrow 0}\liminf_{n \rightarrow \infty}-\frac{1}{n} \log \vartheta\left(B^{n}_{\varepsilon}(x) \right)=\lim _{\varepsilon \rightarrow 0} \limsup_{n \rightarrow \infty}-\frac{1}{n} \log \vartheta\left(B^{n}_{\varepsilon}(x) \right)=\beta\right\}.
\end{aligned}
$$

Choose some monotonic sequence $\varepsilon_{M} \rightarrow 0$ as $M \rightarrow \infty$. Let $\delta>0$ and 
$$
L_{\beta, M}=\left\{x \in L_{\beta} \ \Big| \ \beta-\delta<\liminf_{n \rightarrow \infty}-\frac{1}{n} \log \vartheta\left(B^{n}_{\varepsilon_M}(x)\right)\right\}.
$$
Obviously, $L_{\beta, M} \subseteq L_{\beta, M+1}$ and $L_{\beta}=\bigcup_{M} L_{\beta, M}$.

Note that for each $x \in$ $L_{\beta}$ and every $\varepsilon>0$ one has
$$
\limsup_{n \rightarrow \infty}-\frac{1}{n} \log \vartheta\left(B^{n}_{\varepsilon}(x) \right) \leq \beta.
$$
For fixed $x \in L_{\beta, M}$ there exists $N_{0}=N_{0}\left(x, \delta, \varepsilon_{M}\right)$ such that
$$
\beta-\delta<-\frac{1}{n} \log \vartheta\left(B_{\varepsilon_{M}}^{n}(x)\right)<\beta+\delta
$$
for all $n>N_{0}$. Put
\begin{equation}
L_{\beta, M, N}=\left\{x \in L_{\beta, M} \ \vert \  N_{0}=N_{0}\left(x, \delta, \varepsilon_{M}\right)<N\right\}.
\end{equation}
Again, it is easy to see that $L_{\beta, M, N} \subseteq L_{\beta, M, N+1}$ and $L_{\beta, M}=\bigcup_{N} L_{\beta, M, N}$.

The following lemma serves as a preparation for establishing the relationship between $\mathscr{E}_{top}^{P}\left(f, L_{\beta, M, N},\varepsilon\right)$ and $\mathscr{E}_{\vartheta, q}^{P}\left(f, L_{\beta, M, N}, \varepsilon\right)$.

\begin{lem}
Let $\beta\geq 0, q, t\in \R$, $\vartheta \in \mathscr{M}\left(Y\right)$ and $\delta>0$. Consider $L_{\beta, M, N}$ for some $M, N \in \mathbb{N}$ and $\varepsilon_{M}$ small enough. Then for $s \geq q \beta+|q| \delta+t$ one has

$$
\mathscr{P}_{\varepsilon_{M}}^{s}(L_{\beta, M, N}) \leq \overline{\mathscr{P}}_{\varepsilon_M, \vartheta}^{q, t}(L_{\beta, M, N}).
$$    
\end{lem}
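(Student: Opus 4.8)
The plan is to compare the two quantities packing by packing. The key observation is that the centered $(N',\varepsilon_M)$-packings of $L_{\beta,M,N}$ over which $P^{s}_{N',\varepsilon_M}(L_{\beta,M,N})$ is taken as a supremum are exactly the ones appearing in the definition of $\overline{\mathscr{P}}^{q,t}_{N',\varepsilon_M,\vartheta}(L_{\beta,M,N})$; hence it suffices to dominate the summand $e^{-sn_i}$ by $\Psi_q(\vartheta(B^{n_i}_{\varepsilon_M}(x_i)))e^{-tn_i}$ for each ball, which is possible once the $n_i$ are large enough that the defining estimate of $L_{\beta,M,N}$ applies.

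First I would fix $N'\geq N$ and an arbitrary centered $(N',\varepsilon_M)$-packing $\{\overline{B}^{n_i}_{\varepsilon_M}(x_i)\}_i$ of $L_{\beta,M,N}$. Since $x_i\in L_{\beta,M,N}$, we have $N_0(x_i,\delta,\varepsilon_M)<N\leq N'\leq n_i$, so the construction preceding the lemma gives $e^{-n_i(\beta+\delta)}<\vartheta(B^{n_i}_{\varepsilon_M}(x_i))<e^{-n_i(\beta-\delta)}$ for every $i$.

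Next I would check the pointwise inequality $e^{-sn_i}\leq\Psi_q(\vartheta(B^{n_i}_{\varepsilon_M}(x_i)))e^{-tn_i}$, that is, $\Psi_q(\vartheta(B^{n_i}_{\varepsilon_M}(x_i)))\geq e^{-(s-t)n_i}$, using $s\geq q\beta+|q|\delta+t$. If $q>0$, $u\mapsto u^q$ is increasing, so the lower estimate on $\vartheta$ yields $\vartheta(B^{n_i}_{\varepsilon_M}(x_i))^q> e^{-qn_i(\beta+\delta)}=e^{-(q\beta+|q|\delta)n_i}\geq e^{-(s-t)n_i}$; if $q<0$, $u\mapsto u^q$ is decreasing, so the upper estimate yields $\vartheta(B^{n_i}_{\varepsilon_M}(x_i))^q> e^{-qn_i(\beta-\delta)}=e^{-(q\beta+|q|\delta)n_i}\geq e^{-(s-t)n_i}$, where positivity of $\vartheta(B^{n_i}_{\varepsilon_M}(x_i))$ keeps the negative power well defined; and if $q=0$ it is immediate since $\Psi_0\equiv 1$ and $s\geq t$. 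Summing over $i$ and taking the supremum over all such packings gives $P^{s}_{N',\varepsilon_M}(L_{\beta,M,N})\leq\overline{\mathscr{P}}^{q,t}_{N',\varepsilon_M,\vartheta}(L_{\beta,M,N})$ for every $N'\geq N$, and letting $N'\to\infty$ gives $P^{s}_{\varepsilon_M}(L_{\beta,M,N})\leq\overline{\mathscr{P}}^{q,t}_{\varepsilon_M,\vartheta}(L_{\beta,M,N})$.

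Finally I would pass from $P^{s}_{\varepsilon_M}$ to $\mathscr{P}^{s}_{\varepsilon_M}$: using the trivial cover $L_{\beta,M,N}\subseteq L_{\beta,M,N}\cup\emptyset\cup\emptyset\cup\cdots$ in the definition of $\mathscr{P}^{s}_{\varepsilon_M}$, together with $P^{s}_{\varepsilon_M}(\emptyset)=0$, yields $\mathscr{P}^{s}_{\varepsilon_M}(L_{\beta,M,N})\leq P^{s}_{\varepsilon_M}(L_{\beta,M,N})$, and chaining the two displays proves the lemma. The only point requiring care is the sign bookkeeping in the pointwise step: one must see that the single threshold $s=q\beta+|q|\delta+t$ works simultaneously for $q>0$ (where the lower bound on $\vartheta(B^{n_i}_{\varepsilon_M}(x_i))$ is the relevant one) and for $q<0$ (where the upper bound is). The hypothesis that $\varepsilon_M$ be small enough is used only so that $L_{\beta,M,N}$ is set up as in that construction; everything else is a direct unwinding of the definitions of Section \ref{Section2}.
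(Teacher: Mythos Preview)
Your proof is correct and follows essentially the same approach as the paper: both fix a centered packing with levels above $N$, use the two-sided estimate $e^{-(\beta+\delta)n_i}<\vartheta(B^{n_i}_{\varepsilon_M}(x_i))<e^{-(\beta-\delta)n_i}$ coming from the definition of $L_{\beta,M,N}$, split on the sign of $q$ to bound $\Psi_q(\vartheta(B^{n_i}_{\varepsilon_M}(x_i)))$ below by $e^{-(q\beta+|q|\delta)n_i}$, sum and take the supremum, and then let the level tend to infinity. You are slightly more explicit than the paper in spelling out the trivial-cover inequality $\mathscr{P}^{s}_{\varepsilon_M}\leq P^{s}_{\varepsilon_M}$ at the end (the paper leaves this implicit in the phrase ``we obtain the desired result''), but the substance is identical.
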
 
\begin{proof}
Suppose $n>N$ and $\mathcal{G}=\left\{\overline{B}^{n_{i}}_{\varepsilon_{M}}(x_{i})\right\}_{i}$ is an arbitrary centered $(n, \varepsilon_{M})$-packing of $L_{\beta, M, N}.$
Since $x_{i} \in L_{\beta, M, N}$ for all $i$ and $n_{i} \geq n>N$, we have
$$
e^{-(\beta+\delta) n_{i}} \leq \vartheta\left(\overline{B}^{n_{i}}_{\varepsilon_{M}}(x_{i})\right) \leq e^{-(\beta-\delta) n_{i}}.
$$
If $q \geq 0$ then $\vartheta\left(\overline{B}^{n_{i}}_{\varepsilon_{M}}(x_{i})\right)^{q} \geq e^{-q(\beta+\delta) n_{i}}$ and

$$
\begin{aligned}
\overline{\mathscr{P}}_{n, \varepsilon_{M}, \vartheta}^{q, t}(L_{\beta, M, N})\geq \sum_{i} \vartheta\left(\overline{B}^{n_{i}}_{\varepsilon_{M}}(x_{i})\right)^{q} e^{-t n_{i}} & \geq \sum_{i} e^{-n_{i}(q \beta+\delta+t)}.
\end{aligned}
$$
 Since $\mathcal{G}$ is an arbitrary centered $(n, \varepsilon_{M})$-packing, we get

$$
P_{n, \varepsilon_{M}}^{s}(L_{\beta, M, N}) \leq \overline{\mathscr{P}}_{n, \varepsilon_{M}, \vartheta}^{q, t}(L_{\beta, M, N})
$$
for $s \geq q \beta+q \delta+t$.

Similarly, if $q \leq 0$ then $\vartheta\left(\overline{B}^{n_{i}}_{\varepsilon_{M}}(x_{i})\right)^{q} \geq e^{-(\beta-\delta) q n_{i}}$ and

$$
\begin{aligned}
\overline{\mathscr{P}}_{n, \varepsilon_{M}, \vartheta}^{q, t}(L_{\beta, M, N})\geq \sum_{i} \vartheta\left(\overline{B}^{n_{i}}_{\varepsilon_{M}}(x_{i})\right)^{q} e^{-t n_{i}} & \geq \sum_{i} e^{-n_{i}(q \beta-q \delta+t)}.
\end{aligned}
$$
Again, since $\mathcal{G}$ is an arbitrary centered $(n, \varepsilon_{M})$-packing, we conclude that
$$
P_{n, \varepsilon_{M}}^{s}(L_{\beta, M, N}) \leq \overline{\mathscr{P}}_{n, \varepsilon_{M}, \vartheta}^{q, t}(L_{\beta, M, N})
$$
for $s \geq q \beta-q \delta+t$. 
Taking limits as $n$ tends to infinity we obtain the desired result.
\end{proof} 

\begin{lem}
 For any $\beta \geq 0$, $q \in \mathbb{R}$ and $\vartheta \in \mathscr{M}\left(Y\right)$, one has

$$
\mathscr{E}_{top}^{P}\left(f, L_{\beta}\right) \leq q \beta+\overline{\mathscr{E}}_{\vartheta, q}^{P}\left(f, L_{\beta}\right).
$$   
\end{lem}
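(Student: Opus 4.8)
The plan is to bound $\mathscr{E}_{top}^{P}$ on each building block $L_{\beta,M,N}$ by the $(q,\vartheta)$-packing entropy of $L_{\beta}$ at the same scale $\varepsilon_{M}$, and then to assemble the estimate along the exhaustions $L_{\beta,M}=\bigcup_{N}L_{\beta,M,N}$ and $L_{\beta}=\bigcup_{M}L_{\beta,M}$ using the $\sigma$-stability of $\mathscr{E}_{top}^{P}$ and its monotonicity in the scale, letting first $M\to\infty$ (so $\varepsilon_{M}\downarrow 0$) and then $\delta\downarrow 0$.

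First I would note that the computation in the preceding lemma applies verbatim to any subset $F\subseteq L_{\beta,M,N}$, since the only fact used is that the centre of a centered $(n,\varepsilon_{M})$-packing with $n>N$ lies in $L_{\beta,M,N}$, so that $e^{-(\beta+\delta)n_{i}}\le\vartheta(\overline{B}^{n_{i}}_{\varepsilon_{M}}(x_{i}))\le e^{-(\beta-\delta)n_{i}}$. Hence $P^{s}_{\varepsilon_{M}}(F)\le\overline{\mathscr{P}}^{q,t}_{\varepsilon_{M},\vartheta}(F)$ for all $F\subseteq L_{\beta,M,N}$ whenever $s\ge q\beta+|q|\delta+t$. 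Given a countable cover $L_{\beta,M,N}\subseteq\bigcup_{i}A_{i}$, intersecting each $A_{i}$ with $L_{\beta,M,N}$ and applying this to the pieces, then taking the infimum over covers, gives
$$\mathscr{P}^{s}_{\varepsilon_{M}}(L_{\beta,M,N})\le\mathscr{P}^{q,t}_{\varepsilon_{M},\vartheta}(L_{\beta,M,N})\qquad\text{whenever }s\ge q\beta+|q|\delta+t .$$
Reading off critical exponents: if $t>\mathscr{E}^{P}_{\vartheta,q}(f,L_{\beta,M,N},\varepsilon_{M})$ then the right-hand side is $0$, hence $\mathscr{P}^{q\beta+|q|\delta+t}_{\varepsilon_{M}}(L_{\beta,M,N})=0$ and so $q\beta+|q|\delta+t\ge\mathscr{E}^{P}_{top}(f,L_{\beta,M,N},\varepsilon_{M})$; letting $t\downarrow\mathscr{E}^{P}_{\vartheta,q}(f,L_{\beta,M,N},\varepsilon_{M})$ and using monotonicity in the set argument,
$$\mathscr{E}^{P}_{top}(f,L_{\beta,M,N},\varepsilon_{M})\le q\beta+|q|\delta+\mathscr{E}^{P}_{\vartheta,q}(f,L_{\beta},\varepsilon_{M}) .$$

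Next I would assemble. Taking $\sup_{N}$ and using $\sigma$-stability of $\mathscr{E}^{P}_{top}(f,\cdot,\varepsilon_{M})$ with $L_{\beta,M}=\bigcup_{N}L_{\beta,M,N}$ gives $\mathscr{E}^{P}_{top}(f,L_{\beta,M},\varepsilon_{M})\le q\beta+|q|\delta+\mathscr{E}^{P}_{\vartheta,q}(f,L_{\beta},\varepsilon_{M})$ for all $M$ large enough, say $M\ge M_{0}$. Fixing such an $M$ and using $L_{\beta}=\bigcup_{M'\ge M_{0}}L_{\beta,M'}$ with $\sigma$-stability again, I would split the supremum over $M'$: for $M'\le M$ monotonicity in the set argument gives $\mathscr{E}^{P}_{top}(f,L_{\beta,M'},\varepsilon_{M})\le\mathscr{E}^{P}_{top}(f,L_{\beta,M},\varepsilon_{M})$, whereas for $M'>M$ the monotonicity of $\varepsilon\mapsto\mathscr{E}^{P}_{top}(f,\cdot,\varepsilon)$ (as $\varepsilon_{M'}\le\varepsilon_{M}$) together with the previous display applied to $M'$ gives $\mathscr{E}^{P}_{top}(f,L_{\beta,M'},\varepsilon_{M})\le\mathscr{E}^{P}_{top}(f,L_{\beta,M'},\varepsilon_{M'})\le q\beta+|q|\delta+\mathscr{E}^{P}_{\vartheta,q}(f,L_{\beta},\varepsilon_{M'})$. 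Therefore
$$\mathscr{E}^{P}_{top}(f,L_{\beta},\varepsilon_{M})\le q\beta+|q|\delta+\sup_{M'\ge M}\mathscr{E}^{P}_{\vartheta,q}(f,L_{\beta},\varepsilon_{M'}) .$$
Letting $M\to\infty$, the left-hand side tends to $\mathscr{E}^{P}_{top}(f,L_{\beta})$ and the right-hand side to $q\beta+|q|\delta+\limsup_{M\to\infty}\mathscr{E}^{P}_{\vartheta,q}(f,L_{\beta},\varepsilon_{M})\le q\beta+|q|\delta+\overline{\mathscr{E}}^{P}_{\vartheta,q}(f,L_{\beta})$; since $L_{\beta}$ does not depend on $\delta$, letting $\delta\to 0$ completes the proof.

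The main obstacle is getting the comparison of critical exponents to point the useful way. The inequality coming directly out of the preceding lemma only controls $\mathscr{P}^{s}_{\varepsilon_{M}}$ by the \emph{pre-measure} $\overline{\mathscr{P}}^{q,t}_{\varepsilon_{M},\vartheta}$, whose critical exponent $\Delta^{P}_{\vartheta,q}$ is in general strictly larger than $\mathscr{E}^{P}_{\vartheta,q}$ and hence too crude to yield a bound in terms of $\overline{\mathscr{E}}^{P}_{\vartheta,q}$; the resolution is that the lemma's estimate is valid for every subset of $L_{\beta,M,N}$ and therefore lifts to the genuine packing measure $\mathscr{P}^{q,t}_{\varepsilon_{M},\vartheta}$. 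A secondary nuisance is the simultaneous presence of the scale $\varepsilon$ (encoded in $M$) and the index of the exhausting family (encoded in $N$ and $M'$): the suprema and limits must be ordered so that monotonicity of $\mathscr{E}^{P}_{top}$ and $\mathscr{E}^{P}_{\vartheta,q}$ in the set argument and of $\mathscr{E}^{P}_{top}$ in $\varepsilon$ are all simultaneously available, which is why the argument passes through $\sup_{M'\ge M}$ before taking $M\to\infty$.
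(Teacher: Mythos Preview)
Your proof is correct and follows essentially the same route as the paper: invoke the preceding comparison lemma on the blocks $L_{\beta,M,N}$, pass to critical exponents, assemble along the exhaustions $L_{\beta,M}=\bigcup_N L_{\beta,M,N}$ and $L_{\beta}=\bigcup_M L_{\beta,M}$ using $\sigma$-stability, and finally let $\varepsilon_M\to 0$ and $\delta\to 0$. Your treatment is in fact more careful than the paper's at one point: you explicitly observe that the estimate $P^{s}_{\varepsilon_M}(F)\le\overline{\mathscr{P}}^{q,t}_{\varepsilon_M,\vartheta}(F)$ holds for every $F\subseteq L_{\beta,M,N}$ and use this to upgrade the comparison from the pre-measure $\overline{\mathscr{P}}^{q,t}_{\varepsilon_M,\vartheta}$ to the genuine packing measure $\mathscr{P}^{q,t}_{\varepsilon_M,\vartheta}$, whereas the paper passes from $P_{\varepsilon_M}^{q\beta+|q|\delta+t}(L_{\beta,M})\le\sum_N\overline{\mathscr{P}}^{q,t}_{\varepsilon_M,\vartheta}(L_{\beta,M,N})$ directly to $P_{\varepsilon_M}^{q\beta+|q|\delta+t}(L_{\beta,M})\le\mathscr{P}^{q,t}_{\varepsilon_M,\vartheta}(L_{\beta,M})$ without justification (the displayed sum is only one particular cover, so it bounds $\mathscr{P}$ from above, not below). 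Your two-scale assembly via $\sup_{M'\ge M}$ before $M\to\infty$ is likewise a cleaner replacement for the paper's concluding ``it is obvious''.
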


\begin{proof}
We may assume that $L_{\beta} \neq \emptyset$. Otherwise, the statement is obvious, since both sides are equal to $-\infty$.
Since $L_{\beta}=\bigcup_{M}\bigcup_{N}L_{\beta, M, N}$
and
$$
P_{\varepsilon_{M}}^{q \beta+\vert q \vert \delta+t}(L_{\beta, M, N}) \leq \overline{\mathscr{P}}_{\varepsilon_{M}, \vartheta}^{q, t}(L_{\beta, M, N}),
$$ 
we can see that 
\begin{align*}
P_{\varepsilon_{M}}^{q \beta+\vert q \vert \delta+t}(L_{\beta,M})
&=P_{\varepsilon_{M}}^{q \beta+\vert q \vert \delta+t}(\cup_{N}L_{\beta, M, N}) \\
&\leq \sum_{N}P_{\varepsilon_{M}}^{q \beta+\vert q \vert \delta+t}(L_{\beta, M, N})\\
&\leq \sum_{N}\overline{\mathscr{P}}_{\varepsilon_{M}, \vartheta}^{q, t}(L_{\beta, M, N}),
\end{align*}
whence 
$$P_{\varepsilon_{M}}^{q \beta+\vert q \vert \delta+t}(L_{\beta,M})\leq {\mathscr{P}}_{\varepsilon_{M}, \vartheta}^{q, t}(L_{\beta, M}).$$
Since $L_{\beta}=\bigcup_{M}L_{\beta, M},$
Suppose that $t=\overline{\mathscr{E}}_{\vartheta, q}^{P}\left(f, L_{\beta}\right),$ then there exists $M>0$ such that 
$${\mathscr{P}}_{\varepsilon_{M}, \vartheta}^{q, t+\delta}(L_{\beta, M})=0,$$
for any $\delta>0,$
which implies that 
$${P}_{\varepsilon_{M}}^{q \beta+(\vert q \vert+1) \delta+t}(L_{\beta, M})\leq {\mathscr{P}}_{\varepsilon_{M}, \vartheta}^{q, t+\delta}(L_{\beta, M})=0.$$
Then, it is obvious that
$$\mathscr{E}_{top}^{P}\left(f, L_{\beta}\right) \leq q \beta+\overline{\mathscr{E}}_{\vartheta, q}^{P}\left(f, L_{\beta}\right).$$
\end{proof}
Let us prove the opposite of inequality.
\begin{lem}
 Let $\beta\geq 0, q, t\in \R$, $\delta>0$ and $\vartheta \in \mathscr{M}\left(Y\right)$. Consider $L_{\beta, M, N}$ for some $M, N \in \mathbb{N}$ and $\varepsilon_{M}$ small enough. Then for $s \leq q \beta+|q| \delta+t,$ one has

$$
\mathscr{P}_{\varepsilon_{M}}^{s}(L_{\beta, M, N}) \geq \overline{\mathscr{P}}_{\varepsilon_M, \vartheta}^{q, t}(L_{\beta, M, N}).
$$    
\end{lem}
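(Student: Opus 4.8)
The plan is to mirror the proof of the preceding lemma, in which the only genuine input was a termwise comparison between $\Psi_q(\vartheta(\overline{B}^{n_i}_{\varepsilon_M}(x_i)))e^{-tn_i}$ and $e^{-sn_i}$ coming from the uniform estimates available on $L_{\beta,M,N}$. Here the comparison is needed in the opposite sense, so I must use the \emph{upper} one‑sided bound on $\vartheta(\overline{B}^{n_i}_{\varepsilon_M}(x_i))$; moreover, since the inequality $\mathscr{P}_{\varepsilon_M}^{s}\ge\overline{\mathscr{P}}_{\varepsilon_M,\vartheta}^{q,t}$ does not follow from the trivial order relations $\mathscr{P}\le P\le\overline{\mathscr{P}}$ (as it did in the previous lemma), I also have to carry the infimum over countable covers that appears in the definition of $\mathscr{P}_{\varepsilon_M}^{s}$.

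First I would fix $n>N$ and a set $A\subseteq L_{\beta,M,N}$, and take an arbitrary centered $(n,\varepsilon_M)$-packing $\{\overline{B}^{n_i}_{\varepsilon_M}(x_i)\}_i$ of $A$. Since $x_i\in L_{\beta,M,N}$ and $n_i\ge n>N$, the two‑sided bound $e^{-(\beta+\delta)n_i}\le\vartheta(\overline{B}^{n_i}_{\varepsilon_M}(x_i))\le e^{-(\beta-\delta)n_i}$ holds (for $\varepsilon_M$ small enough, exactly as in the previous lemma); in particular $\vartheta(\overline{B}^{n_i}_{\varepsilon_M}(x_i))>0$, so $\Psi_q$ is just the $q$‑th power there. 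Separating the cases $q\ge0$ and $q<0$ and invoking the one‑sided bound appropriate to each, one obtains, for every $s\le q\beta+|q|\delta+t$,
\[
\Psi_q\bigl(\vartheta(\overline{B}^{n_i}_{\varepsilon_M}(x_i))\bigr)e^{-tn_i}\le e^{-sn_i}\qquad\text{for all }i.
\]
Summing over the packing and taking the supremum over all centered $(n,\varepsilon_M)$-packings of $A$ gives $\overline{\mathscr{P}}_{n,\varepsilon_M,\vartheta}^{q,t}(A)\le P_{n,\varepsilon_M}^{s}(A)$, and letting $n\to\infty$ yields $\overline{\mathscr{P}}_{\varepsilon_M,\vartheta}^{q,t}(A)\le P_{\varepsilon_M}^{s}(A)$ for every $A\subseteq L_{\beta,M,N}$ and every $s\le q\beta+|q|\delta+t$.

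Next, given an arbitrary countable cover $L_{\beta,M,N}\subseteq\bigcup_jA_j$, I would replace $A_j$ by $A_j\cap L_{\beta,M,N}$, so that each $A_j\subseteq L_{\beta,M,N}$ and the previous estimate applies to every $A_j$; summing it gives $\sum_jP_{\varepsilon_M}^{s}(A_j)\ge\sum_j\overline{\mathscr{P}}_{\varepsilon_M,\vartheta}^{q,t}(A_j)$. Taking the infimum over all such covers, the left‑hand side becomes $\mathscr{P}_{\varepsilon_M}^{s}(L_{\beta,M,N})$ by definition; combining this with the direct estimate $\overline{\mathscr{P}}_{\varepsilon_M,\vartheta}^{q,t}(L_{\beta,M,N})\le P_{\varepsilon_M}^{s}(L_{\beta,M,N})$ and with the fact that the uniform two‑sided control of $\vartheta(\overline{B}^{n}_{\varepsilon_M}(x))$ on $L_{\beta,M,N}$ prevents the usual loss between the packing pre‑measure and the packing measure there, one arrives at $\mathscr{P}_{\varepsilon_M}^{s}(L_{\beta,M,N})\ge\overline{\mathscr{P}}_{\varepsilon_M,\vartheta}^{q,t}(L_{\beta,M,N})$ for all $s\le q\beta+|q|\delta+t$.

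The routine part is the termwise estimate and the passage from packings to the pre‑measure; the delicate step — and the place where this lemma really differs from the previous one — is the last one, namely controlling the countable infimum in the definition of $\mathscr{P}_{\varepsilon_M}^{s}$. The key point I expect to exploit is that membership in $L_{\beta,M,N}$ (for $n>N$) supplies \emph{both} a lower and an upper exponential bound on $\vartheta(\overline{B}^{n}_{\varepsilon_M}(x))$, and this two‑sided uniformity is exactly what is needed to keep the Method~II construction from shrinking the pre‑measure on the pieces of a cover.
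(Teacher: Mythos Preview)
Your termwise comparison and the passage to $P_{\varepsilon_M}^{s}(A)\ge\overline{\mathscr{P}}_{\varepsilon_M,\vartheta}^{q,t}(A)$ for $A\subseteq L_{\beta,M,N}$ is exactly what the paper does (with $A=L_{\beta,M,N}$); the paper then simply lets $n\to\infty$ and stops. In other words, the paper's own argument establishes only the \emph{pre}-measure inequality $P_{\varepsilon_M}^{s}(L_{\beta,M,N})\ge\overline{\mathscr{P}}_{\varepsilon_M,\vartheta}^{q,t}(L_{\beta,M,N})$, not the literally stated inequality with the Method~II quantity $\mathscr{P}_{\varepsilon_M}^{s}$ on the left. (A side remark: the termwise bound actually yields the conclusion for $s\le q\beta-|q|\delta+t$, as the paper's computation shows, not $+|q|\delta$; you inherited the sign from the statement.)

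You are right to notice that passing from $P^{s}$ to $\mathscr{P}^{s}$ is not automatic. Your cover argument in the third paragraph correctly gives $\mathscr{P}_{\varepsilon_M}^{s}(L_{\beta,M,N})\ge\mathscr{P}_{\varepsilon_M,\vartheta}^{q,t}(L_{\beta,M,N})$ (Method~II on \emph{both} sides), and that version is all one needs downstream. But your final sentence --- that the two-sided control of $\vartheta(\overline{B}^{n}_{\varepsilon_M}(x))$ on $L_{\beta,M,N}$ ``prevents the usual loss between the packing pre-measure and the packing measure'' --- is not an argument, and as a general principle it is false: uniform local bounds on $\vartheta$ do nothing to stop a countable cover $(A_j)$ from making $\sum_j P_{\varepsilon_M}^{s}(A_j)$ strictly smaller than $\overline{\mathscr{P}}_{\varepsilon_M,\vartheta}^{q,t}(L_{\beta,M,N})$. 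So the extra step you attempt is a genuine gap. The honest fix is either to state the lemma with $P_{\varepsilon_M}^{s}$ on the left (which is what the paper actually proves), or with $\mathscr{P}_{\varepsilon_M,\vartheta}^{q,t}$ on the right (which is what your cover argument actually yields); either version suffices for the application.
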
 
\begin{proof}
Suppose $n>N$ and $\mathcal{G}=\left\{\overline{B}^{n_{i}}_{\varepsilon_{M}}(x_{i})\right\}_{i}$ is an arbitrary centered $(n, \varepsilon_{M})$-packing of $L_{\beta, M, N}.$
Since $x_{i} \in L_{\beta, M, N}$ for all $i$ and $n_{i} \geq n>N$, we have
$$
e^{-(\beta+\delta) n_{i}} \leq \vartheta\left(\overline{B}^{n_{i}}_{\varepsilon_{M}}(x_{i})\right) \leq e^{-(\beta-\delta) n_{i}}.
$$

If $q \geq 0$ then $\vartheta\left(\overline{B}^{n_{i}}_{\varepsilon_{M}}(x_{i})\right)^{q} \leq e^{-(\beta-\delta) n_{i}}$ and

$$
\begin{aligned}
\sum_{i} \vartheta\left(\overline{B}^{n_{i}}_{\varepsilon_{M}}(x_{i})\right)^{q} e^{-t n_{i}} \leq \sum_{i} e^{-n_{i}(q \beta-q\delta+t)} \leq P_{n, \varepsilon_{M}}^{s}(L_{\beta, M, N}).
\end{aligned}
$$
Since $\mathcal{G}$ is an arbitrary centered $(n, \varepsilon_{M})$-packing, we get

$$
P_{n, \varepsilon_{M}}^{s}(L_{\beta, M, N}) \geq \overline{\mathscr{P}}_{n, \varepsilon_{M}, \vartheta}^{q, t}(L_{\beta, M, N})
$$
provided $s \leq q \beta-q \delta+t$.

Similarly, if $q \leq 0$ then $\vartheta\left(\overline{B}^{n_{i}}_{\varepsilon_{M}}(x_{i})\right)^{q} \leq e^{-q(\beta+\delta) n_{i}}$ and

$$
\begin{aligned}
 \sum_{i} \vartheta\left(\overline{B}^{n_{i}}_{\varepsilon_{M}}(x_{i})\right)^{q} e^{-t n_{i}} & \leq \sum_{i} e^{-n_{i}(q \beta+q \delta+t)}\leq P_{n, \varepsilon_{M}}^{s}(L_{\beta, M, N})
\end{aligned}
$$
for $s \leq q \beta +q \delta+t$. Again, since $\mathcal{G}$ is an arbitrary centered $(n, \varepsilon_{M})-$ packing, we conclude that

$$
P_{n, \varepsilon_{M}}^{s}(L_{\beta, M, N}) \geq \overline{\mathscr{P}}_{n, \varepsilon_{M}, \vartheta}^{q, t}(L_{\beta, M, N}),
$$
for $s \leq q \beta +q \delta+t$.
Taking limits as $n$ tends to infinity we obtain the desired result.
\end{proof}

\begin{lem}
For any $\beta \geq 0$, $q \in \mathbb{R}$ and $\vartheta \in \mathscr{M}\left(Y\right)$, one has

$$
\mathscr{E}_{top}^{P}\left(f, L_{\beta}\right) \geq q \beta+\underline{\mathscr{E}}_{\vartheta, q}^{P}\left(f, L_{\beta}\right).
$$
\end{lem}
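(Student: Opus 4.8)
The approach is to run the proof of the two preceding lemmas with every inequality reversed, replacing the upper comparison $\mathscr{P}_{\varepsilon_M}^{s}(L_{\beta,M,N})\le\overline{\mathscr{P}}_{\varepsilon_M,\vartheta}^{q,t}(L_{\beta,M,N})$ (valid for $s\ge q\beta+|q|\delta+t$) by the lower one just established, which holds for $s\le q\beta+|q|\delta+t$. We may assume $L_\beta\neq\emptyset$, since otherwise both sides are $-\infty$. Fix $\delta>0$ and keep in mind that $L_\beta=\bigcup_M\bigcup_N L_{\beta,M,N}$, these sets being increasing in $M$ and in $N$, and likewise $L_\beta=\bigcup_M L_{\beta,M}$ with $L_{\beta,M}=\bigcup_N L_{\beta,M,N}$.

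\emph{Step 1 (a bound on each piece).} Fix $M,N$ and any $t<\mathscr{E}_{\vartheta,q}^{P}(f,L_{\beta,M,N},\varepsilon_M)$, so that $\mathscr{P}_{\varepsilon_M,\vartheta}^{q,t}(L_{\beta,M,N})=\infty$, hence $\overline{\mathscr{P}}_{\varepsilon_M,\vartheta}^{q,t}(L_{\beta,M,N})=\infty$ as $\mathscr{P}_{\varepsilon_M,\vartheta}^{q,t}\le\overline{\mathscr{P}}_{\varepsilon_M,\vartheta}^{q,t}$. The preceding lemma then gives $\mathscr{P}_{\varepsilon_M}^{s}(L_{\beta,M,N})=\infty$ for every $s\le q\beta+|q|\delta+t$, and the critical-value property of $\mathscr{E}_{top}^{P}(f,\cdot,\varepsilon_M)$ forces $s\le\mathscr{E}_{top}^{P}(f,L_{\beta,M,N},\varepsilon_M)$ for all such $s$. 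Letting $t\uparrow\mathscr{E}_{\vartheta,q}^{P}(f,L_{\beta,M,N},\varepsilon_M)$ and discarding the nonnegative term $|q|\delta$ yields
\[
\mathscr{E}_{top}^{P}(f,L_{\beta,M,N},\varepsilon_M)\ge q\beta+\mathscr{E}_{\vartheta,q}^{P}(f,L_{\beta,M,N},\varepsilon_M).
\]

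\emph{Step 2 (reassembling).} Since $\mathscr{E}_{top}^{P}(f,\cdot,\varepsilon_M)=\mathscr{E}_{\vartheta,0}^{P}(f,\cdot,\varepsilon_M)$ and $\mathscr{E}_{\vartheta,q}^{P}(f,\cdot,\varepsilon_M)$ are $\sigma$-stable and $L_{\beta,M}=\bigcup_N L_{\beta,M,N}$, taking the supremum over $N$ in Step 1 gives $\mathscr{E}_{top}^{P}(f,L_{\beta,M},\varepsilon_M)\ge q\beta+\mathscr{E}_{\vartheta,q}^{P}(f,L_{\beta,M},\varepsilon_M)$. As $L_{\beta,M}\subseteq L_\beta$ and $\mathscr{E}_{top}^{P}(f,A)=\lim_{\varepsilon\to0}\mathscr{E}_{top}^{P}(f,A,\varepsilon)\ge\mathscr{E}_{top}^{P}(f,A,\varepsilon_M)$ by monotonicity of $\mathscr{E}_{top}^{P}$ in the scale, we obtain
\[
\mathscr{E}_{top}^{P}(f,L_\beta)\ge q\beta+\mathscr{E}_{\vartheta,q}^{P}(f,L_{\beta,M},\varepsilon_M)\qquad\text{for every }M.
\]
It then suffices to check that $\sup_M\mathscr{E}_{\vartheta,q}^{P}(f,L_{\beta,M},\varepsilon_M)\ge\underline{\mathscr{E}}_{\vartheta,q}^{P}(f,L_\beta)=\liminf_{\varepsilon\to0}\mathscr{E}_{\vartheta,q}^{P}(f,L_\beta,\varepsilon)$. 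Given $\eta>0$, pick $\varepsilon^{*}>0$ with $\mathscr{E}_{\vartheta,q}^{P}(f,L_\beta,\varepsilon)>\underline{\mathscr{E}}_{\vartheta,q}^{P}(f,L_\beta)-\eta$ for $0<\varepsilon\le\varepsilon^{*}$, take $M$ with $\varepsilon_M\le\varepsilon^{*}$, and use $\sigma$-stability together with $L_\beta=\bigcup_{M'}L_{\beta,M'}$ to find $M'$ with $\mathscr{E}_{\vartheta,q}^{P}(f,L_{\beta,M'},\varepsilon_M)>\underline{\mathscr{E}}_{\vartheta,q}^{P}(f,L_\beta)-\eta$; combining this with the displayed inequality, applied at the index $\max\{M,M'\}$ and using that $L_{\beta,M}$ is increasing in $M$ (and, when $M'>M$, that $\mathscr{E}_{\vartheta,q}^{P}(f,\cdot,\varepsilon)$ is nonincreasing in $\varepsilon$), gives $\mathscr{E}_{top}^{P}(f,L_\beta)>q\beta+\underline{\mathscr{E}}_{\vartheta,q}^{P}(f,L_\beta)-\eta$; letting $\eta\to0$ finishes the argument.

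The step I expect to be delicate is the last one, namely interchanging the exhaustion limit $M\to\infty$, which enlarges $L_{\beta,M}$ up to $L_\beta$, with the scale limit $\varepsilon_M\to0$: because $L_{\beta,M}$ is defined through $\varepsilon_M$, comparing $\mathscr{E}_{\vartheta,q}^{P}(f,L_{\beta,M'},\varepsilon_M)$ with $\mathscr{E}_{\vartheta,q}^{P}(f,L_{\beta,M'},\varepsilon_{M'})$ needs monotonicity of $\mathscr{E}_{\vartheta,q}^{P}(f,\cdot,\varepsilon)$ in $\varepsilon$, which is automatic for $q\le0$ but for $q>0$ must be handled by choosing the monotone sequence $(\varepsilon_M)$ so as to realize the $\liminf$ defining $\underline{\mathscr{E}}_{\vartheta,q}^{P}(f,L_\beta)$, exactly as the upper-bound lemma chooses $(\varepsilon_M)$ to realize the corresponding $\limsup$. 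The rest of the proof is a routine reassembly from Step 1 via $\sigma$-stability and the scale monotonicity of $\mathscr{E}_{top}^{P}$.
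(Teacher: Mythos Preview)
The paper does not supply a proof of this lemma, leaving it as the dual of the preceding upper-bound lemma, and your argument is exactly that intended mirror: compare critical values piecewise on $L_{\beta,M,N}$ via Lemma~5.3, then reassemble by $\sigma$-stability and the scale monotonicity of $\mathscr{E}_{top}^P$.

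Two small repairs are worth noting. First, the \emph{proof} of Lemma~5.3 actually yields the range $s\le q\beta-|q|\delta+t$ (the $+|q|\delta$ in its statement is a sign slip, as one sees from the two cases $q\ge0$ and $q\le0$ in that proof), so in Step~1 you cannot simply ``discard the nonnegative term $|q|\delta$''; instead carry $-|q|\delta$ through the whole argument and let $\delta\to0$ only at the very end, which is harmless since $L_\beta$ itself is $\delta$-independent. Second, Lemma~5.3 as proved compares the \emph{pre}-packing $P_{\varepsilon_M}^s$ with $\overline{\mathscr{P}}_{\varepsilon_M,\vartheta}^{q,t}$, not the packing measure $\mathscr{P}_{\varepsilon_M}^s$; to conclude $\mathscr{E}_{top}^P(f,L_{\beta,M,N},\varepsilon_M)\ge q\beta+\mathscr{E}_{\vartheta,q}^P(f,L_{\beta,M,N},\varepsilon_M)$ you implicitly need the same pointwise estimate on every subset $A\subseteq L_{\beta,M,N}$ (which holds by the identical argument, since centers of a packing of $A$ still lie in $L_{\beta,M,N}$), after which one can take the infimum over covers on both sides to obtain $\mathscr{P}_{\varepsilon_M}^s\ge\mathscr{P}_{\varepsilon_M,\vartheta}^{q,t}$ on $L_{\beta,M,N}$. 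The limit interchange you flag in Step~2 is a genuine subtlety; the paper's own proof of the upper-bound lemma is equally informal at the analogous point (it concludes for $L_\beta$ after only controlling some $L_{\beta,M}$), so your level of detail here already matches, and your suggested fix of selecting $(\varepsilon_M)$ to realize the $\liminf$ is in the right spirit.
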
 

\begin{cor}
 For any $\beta \geq 0$, $q \in \mathbb{R}$ and $\vartheta \in \mathscr{M}\left(Y\right)$, if $$\underline{\mathscr{E}}_{\vartheta, q}^{P}\left(f, L_{\beta}\right)=\overline{\mathscr{E}}_{\vartheta, q}^{P}\left(f, L_{\beta}\right)=\mathscr{E}_{\vartheta, q}^{P}\left(f, L_{\beta}\right),$$ one has

$$
\mathscr{E}_{top}^{P}\left(f, L_{\beta}\right)=q \beta+\mathscr{E}_{\vartheta, q}^{P}\left(f, L_{\beta}\right).
$$   
\end{cor}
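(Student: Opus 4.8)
The plan is to obtain the identity by sandwiching $\mathscr{E}_{top}^{P}(f, L_{\beta})$ between two bounds that have already been established in the two lemmas immediately preceding the corollary. From the second-to-last lemma we have the upper bound
\[
\mathscr{E}_{top}^{P}\left(f, L_{\beta}\right) \leq q \beta+\overline{\mathscr{E}}_{\vartheta, q}^{P}\left(f, L_{\beta}\right),
\]
and from the last lemma we have the lower bound
\[
\mathscr{E}_{top}^{P}\left(f, L_{\beta}\right) \geq q \beta+\underline{\mathscr{E}}_{\vartheta, q}^{P}\left(f, L_{\beta}\right).
\]
Under the standing hypothesis $\underline{\mathscr{E}}_{\vartheta, q}^{P}(f, L_{\beta}) = \overline{\mathscr{E}}_{\vartheta, q}^{P}(f, L_{\beta}) = \mathscr{E}_{\vartheta, q}^{P}(f, L_{\beta})$, the right-hand sides of these two displays coincide and are both equal to $q\beta + \mathscr{E}_{\vartheta, q}^{P}(f, L_{\beta})$, so the two inequalities together force
\[
\mathscr{E}_{top}^{P}\left(f, L_{\beta}\right) = q \beta+\mathscr{E}_{\vartheta, q}^{P}\left(f, L_{\beta}\right),
\]
which is the asserted equality.

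Before writing this down I would check that the two lemmas apply without additional restrictions: each is stated for arbitrary $\beta \geq 0$, $q \in \mathbb{R}$ and $\vartheta \in \mathscr{M}(Y)$, exactly the hypotheses of the corollary, so there is no compatibility issue and the corollary inherits the same generality. The genuine content — decomposing $L_{\beta} = \bigcup_{M}\bigcup_{N} L_{\beta, M, N}$, tracking the $|q|\delta$ error terms in the packing sums, letting $\delta \to 0$, and passing through the scales $\varepsilon_{M} \to 0$ using the monotonicity $\mathscr{E}_{top}^{P}(f, A, \varepsilon_{1}) > \mathscr{E}_{top}^{P}(f, A, \varepsilon_{2})$ for $\varepsilon_{1} < \varepsilon_{2}$ together with $\mathscr{E}_{top}^{P}(f, A) = \lim_{\varepsilon \to 0}\mathscr{E}_{top}^{P}(f, A, \varepsilon)$ — has already been carried out inside those lemmas and their supporting lemmas on $L_{\beta, M}$ and $L_{\beta, M, N}$, so nothing further is required at this step.

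Consequently there is no real obstacle here; the corollary is a one-line consequence of the preceding pair of bounds. The only point to be careful about is the direction of the two inequalities — the upper estimate must involve $\overline{\mathscr{E}}_{\vartheta, q}^{P}$ and the lower estimate $\underline{\mathscr{E}}_{\vartheta, q}^{P}$, since interchanging them would make the sandwich vacuous. If I wanted to double-check the reasoning one level down, I would re-verify that the $\delta$-perturbed packing quantities $P_{\varepsilon_{M}}^{q\beta \pm |q|\delta + t}$ indeed collapse to $P_{\varepsilon_{M}}^{q\beta + t}$ as $\delta \to 0$ through the critical-exponent characterization of $\mathscr{E}_{top}^{P}(\cdot, \cdot, \varepsilon)$, which is what makes the two lemmas pinch to the same value.
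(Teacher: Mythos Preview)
Your proposal is correct and matches the paper's approach exactly: the paper states the corollary without proof, treating it as an immediate consequence of combining the upper bound lemma $\mathscr{E}_{top}^{P}(f, L_{\beta}) \leq q\beta + \overline{\mathscr{E}}_{\vartheta, q}^{P}(f, L_{\beta})$ with the lower bound lemma $\mathscr{E}_{top}^{P}(f, L_{\beta}) \geq q\beta + \underline{\mathscr{E}}_{\vartheta, q}^{P}(f, L_{\beta})$ under the equality hypothesis, which is precisely the sandwich you wrote out. One small remark: the upper bound is not literally the ``second-to-last'' lemma but rather Lemma~5.2 (two technical packing estimates, Lemmas~5.1 and~5.3, are interleaved with the two bounds 5.2 and 5.4), though you cite the correct inequalities so this is only a labeling slip.
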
 

\section{Domain of the multifractal spectrum of local entropies }\label{Section6}

Let $\mathscr{L}: I \to \mathbb{R}$ be a function defined on an interval $I$, which can be finite or infinite. Its Legendre transform $\mathscr{L}^{*}$ is defined on an interval $I^{*}$ as:  
$$
\mathscr{L}^{*}(y) = \inf_{x \in I} (xy + \mathscr{L}(x)),
$$
where $I^{*} = \{y \ | \ \mathscr{L}^{*}(y) \text{ is finite} \}$. The interval $I^{*}$ is referred to as the domain of $\mathscr{L}^{*}$.

The Legendre transform $\mathscr{L}^{*}$ is concave:
$$
\mathscr{L}^{*}\left(\lambda y_{1} + (1-\lambda) y_{2}\right) \geq \lambda \mathscr{L}^{*}\left(y_{1}\right) + (1-\lambda) \mathscr{L}^{*}\left(y_{2}\right),
$$
for $\lambda \in [0,1]$ and $y_{1}, y_{2} \in I^{*}$.

Next, we connect the multifractal spectrum to the Legendre transform of a function. As shown earlier, for any $\beta \geq 0$, $q \in \mathbb{R}$ and $\vartheta \in \mathscr{M}\left(Y\right)$, it holds that:
$$\mathscr{E}_{top}^{P}\left(f, L_{\beta}\right) \geq q \beta+\underline{\mathscr{E}}_{\vartheta, q}^{P}\left(f, L_{\beta}\right)$$
and
$$\mathscr{E}_{top}^{P}\left(f, L_{\beta}\right) \leq q \beta+\overline{\mathscr{E}}_{\vartheta, q}^{P}\left(f, L_{\beta}\right).$$
And, if $\underline{\mathscr{E}}_{\vartheta, q}^{P}\left(f, L_{\beta}\right)=\overline{\mathscr{E}}_{\vartheta, q}^{P}\left(f, L_{\beta}\right)=\mathscr{E}_{\vartheta, q}^{P}\left(f, L_{\beta}\right),$ one has

\begin{equation*}
\mathscr{E}_{top}^{P}\left(f, L_{\beta}\right)= q \beta+\mathscr{E}_{\vartheta, q}^{P}\left(f, L_{\beta}\right).
\end{equation*}
Since $L_{\beta} \subseteq Y$, we conclude that $\mathscr{E}_{\vartheta, q}^{P}\left(f, L_{\beta}\right) \leq \mathscr{E}_{\vartheta, q}^{P}(f, Y)$ for any $q \in \mathbb{R}$. Put
$$
h(q)=\overline{\mathscr{E}}_{\vartheta, q}^{P}(f, Y).
$$
Therefore, we conclude that
\begin{equation}\label{7.2}
\mathscr{E}_{top}^{P}\left(f, L_{\beta}\right) \leq q \beta+h(q) \quad \text { for any } \beta, q.
\end{equation}
Hence, we immediately obtain
\begin{equation}\label{7.3}
\mathscr{E}_{top}^{P}\left(f, L_{\beta}\right) \leq h^{*}(\beta):=\inf _{q}(q \beta+h(q)) 
\end{equation}
In (\ref{7.2}), we weakened the estimate of $\mathscr{E}_{top}^{P}\left(f, L_{\beta}\right)$ to facilitate its connection with the Legendre transform in (\ref{7.3}).






Next, we investigate the domain of $ h^{*}(\beta) $ and show that it encompasses the domain of the multifractal spectrum of local entropies. To this end, we consider:
$$
\underline{\beta}=\sup _{q>0}-\frac{h(q)}{q}
$$
and
$$\overline{\beta}=\inf _{q<0}-\frac{h(q)}{q}.
$$

Obviously, $h^{*}(\beta)=\inf \limits_{q}(q \beta+h(q))>-\infty$ for $\beta \in(\underline{\beta}, \overline{\beta})$. Then we will show that $L_{\beta}=\emptyset,$ if $\beta \notin[\underline{\beta}, \overline{\beta}]$.

\begin{lem}
One has
\begin{equation*}
\underline{\beta} \leq \inf _{x  \in Y} \overline{\mathscr{E}}_{\vartheta}(f, x) \leq \sup _{x} \underline{\mathscr{E}}_{\vartheta}(f, x) \leq \overline{\beta}.
\end{equation*}
Hence, $L_{\beta}=\emptyset$ for $\beta \notin[\underline{\beta}, \overline{\beta}]$.
\end{lem}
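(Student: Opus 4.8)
The plan is to reduce the whole statement to a computation of $\Delta_{\vartheta,q}^{P}(f,\{x\},\varepsilon)$ on singletons $\{x\}\subseteq Y$, and then to let $\varepsilon\to 0$. First I would record the monotonicity step: for every $x\in Y$ and all $q,t\in\R$, $\varepsilon>0$, one has $\mathscr{P}_{\varepsilon,\vartheta}^{q,t}(Y)\ge\overline{\mathscr{P}}_{\varepsilon,\vartheta}^{q,t}(\{x\})$, since any countable cover $Y\subseteq\bigcup_iA_i$ has a piece $A_{i_0}\ni x$, and every centered packing of $\{x\}$ is a centered packing of $A_{i_0}$, so $\overline{\mathscr{P}}_{\varepsilon,\vartheta}^{q,t}(A_{i_0})\ge\overline{\mathscr{P}}_{\varepsilon,\vartheta}^{q,t}(\{x\})$. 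Comparing critical exponents, this gives $\mathscr{E}_{\vartheta,q}^{P}(f,Y,\varepsilon)\ge\Delta_{\vartheta,q}^{P}(f,\{x\},\varepsilon)$ for every $x$.

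Next I would compute the singleton quantity. A centered $(N,\varepsilon)$-packing of $\{x\}$ consists of closed Bowen balls $\overline{B}^{n_i}_{\varepsilon}(x)$; these all contain $x$ and are nested, so such a packing has at most one ball, whence
$$
\overline{\mathscr{P}}_{\varepsilon,\vartheta}^{q,t}(\{x\})=\lim_{N\to\infty}\sup_{n\ge N}\Psi_q\bigl(\vartheta(B^{n}_{\varepsilon}(x))\bigr)e^{-tn}=\limsup_{n\to\infty}\Psi_q\bigl(\vartheta(B^{n}_{\varepsilon}(x))\bigr)e^{-tn}.
$$
Writing $a_n=-\tfrac1n\log\vartheta(B^{n}_{\varepsilon}(x))$, for $q>0$ the right-hand side equals $\limsup_n\exp\bigl(-n(qa_n+t)\bigr)$, which is $\infty$ for $t<-q\liminf_na_n$ and $0$ for $t>-q\liminf_na_n$; for $q<0$ an identical computation, in which the sign of $q$ swaps $\liminf$ and $\limsup$, gives critical value $-q\limsup_na_n$. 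Hence $\Delta_{\vartheta,q}^{P}(f,\{x\},\varepsilon)=-q\liminf_n\bigl(-\tfrac1n\log\vartheta(B^{n}_{\varepsilon}(x))\bigr)$ for $q>0$ and $=-q\limsup_n\bigl(-\tfrac1n\log\vartheta(B^{n}_{\varepsilon}(x))\bigr)$ for $q<0$ (with the evident conventions when some $\vartheta(B^{n}_{\varepsilon}(x))=0$). Since $\varepsilon\mapsto\vartheta(B^{n}_{\varepsilon}(x))$ is nondecreasing, as $\varepsilon\downarrow0$ one has $\liminf_n(-\tfrac1n\log\vartheta(B^{n}_{\varepsilon}(x)))\uparrow\underline{\mathscr{E}}_{\vartheta}(f,x)$ and $\limsup_n(-\tfrac1n\log\vartheta(B^{n}_{\varepsilon}(x)))\uparrow\overline{\mathscr{E}}_{\vartheta}(f,x)$. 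Feeding these into $\mathscr{E}_{\vartheta,q}^{P}(f,Y,\varepsilon)\ge\Delta_{\vartheta,q}^{P}(f,\{x\},\varepsilon)$ and passing to $\limsup_{\varepsilon\to0}$ (the relevant expressions being monotone in $\varepsilon$) yields, for every $x\in Y$,
$$
h(q)=\overline{\mathscr{E}}_{\vartheta,q}^{P}(f,Y)\ge-q\,\underline{\mathscr{E}}_{\vartheta}(f,x)\ \ (q>0),\qquad h(q)\ge-q\,\overline{\mathscr{E}}_{\vartheta}(f,x)\ \ (q<0).
$$

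Finally I would assemble. For $q>0$, dividing by $q$ gives $-h(q)/q\le\underline{\mathscr{E}}_{\vartheta}(f,x)\le\overline{\mathscr{E}}_{\vartheta}(f,x)$ for all $x$, hence $-h(q)/q\le\inf_{x}\overline{\mathscr{E}}_{\vartheta}(f,x)$, and $\sup_{q>0}$ yields $\underline{\beta}\le\inf_{x}\overline{\mathscr{E}}_{\vartheta}(f,x)$. For $q<0$, dividing by $q$ reverses the inequality: $-h(q)/q\ge\overline{\mathscr{E}}_{\vartheta}(f,x)\ge\underline{\mathscr{E}}_{\vartheta}(f,x)$ for all $x$, hence $-h(q)/q\ge\sup_{x}\underline{\mathscr{E}}_{\vartheta}(f,x)$, and $\inf_{q<0}$ gives $\sup_{x}\underline{\mathscr{E}}_{\vartheta}(f,x)\le\overline{\beta}$. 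The middle inequality $\inf_{x}\overline{\mathscr{E}}_{\vartheta}(f,x)\le\sup_{x}\underline{\mathscr{E}}_{\vartheta}(f,x)$ follows from $\underline{\mathscr{E}}_{\vartheta}(f,\cdot)\le\overline{\mathscr{E}}_{\vartheta}(f,\cdot)$ together with the existence of a point where the two agree (e.g.\ any $\vartheta$-typical point, via the Brin--Katok formula, $\vartheta$ being $f$-invariant); it is in any case not needed for the last assertion, since if $x\in L_{\beta}$ then $\beta=\underline{\mathscr{E}}_{\vartheta}(f,x)=\overline{\mathscr{E}}_{\vartheta}(f,x)$, whence $\underline{\beta}\le\overline{\mathscr{E}}_{\vartheta}(f,x)=\beta=\underline{\mathscr{E}}_{\vartheta}(f,x)\le\overline{\beta}$, i.e.\ $L_{\beta}=\emptyset$ whenever $\beta\notin[\underline{\beta},\overline{\beta}]$.

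The main obstacle is the sign bookkeeping in the singleton computation — it is precisely the sign of $q$ that interchanges $\liminf$ and $\limsup$ and decides on which side of the local entropies the ratio $-h(q)/q$ falls — together with justifying the exchange of $\varepsilon\to0$ with the extraction of the critical exponent, which rests on the monotonicity in $\varepsilon$ of $\vartheta(B^{n}_{\varepsilon}(x))$. The degenerate case in which some $\vartheta(B^{n}_{\varepsilon}(x))$ vanishes is handled by the usual conventions for $q\cdot(+\infty)$ and $q\cdot(-\infty)$ and causes no real difficulty.
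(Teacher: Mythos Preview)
Your proof is correct and follows essentially the same strategy as the paper: bound $h(q)$ from below by the $(q,\vartheta)$-packing entropy of a singleton $\{x\}$ and read off the inequalities for $-h(q)/q$. The paper runs this as a contradiction argument (pick $q_0$, set $t_0=-q_0(\overline{\mathscr{E}}_{\vartheta}(f,x)\pm\delta)$, observe $\vartheta(B^n_\varepsilon(x))^{q_0}e^{-nt_0}\ge 1$ for large $n$, contradict monotonicity), whereas you compute $\Delta_{\vartheta,q}^{P}(f,\{x\},\varepsilon)$ exactly and pass to the limit directly; the underlying mechanism is the same.
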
 
\begin{proof}

It is obvious that there exists $x  \in Y$ such that $\underline{\mathscr{E}}_{\vartheta}(f, x)=\overline{\mathscr{E}}_{\vartheta}(f, x)$. Thus, we deduce that
$$\inf _{x  \in Y} \overline{\mathscr{E}}_{\vartheta}(f, x) \leq \sup _{x} \underline{\mathscr{E}}_{\vartheta}(f, x).$$
Assume now that $\underline{\beta} > \inf_{x \in Y} \overline{\mathscr{E}}_{\vartheta}(f, x)$. This assumption implies the existence of $q_0 > 0$, $x \in Y$, and $\delta > 0$ such that  
$$
-\frac{h(q_0)}{q_0} > \overline{\mathscr{E}}_{\vartheta}(f, x) + \delta.
$$
Put $t_0 := -q_0(\overline{\mathscr{E}}_{\vartheta}(f, x) + \delta)$, we observe that $t_0 > h(q_0)$. Furthermore, for sufficiently large $n > N$, where $N \in \mathbb{N}$ and $\varepsilon > 0$, it follows that  
$$
-\frac{1}{n} \log \vartheta(B^{n}_{\varepsilon}(x)) \leq \overline{\mathscr{E}}_{\vartheta}(f, x) + \delta.
$$
Then the inequality  
$$
\vartheta(B^{n}_{\varepsilon}(x))^{q_0} e^{-n t_0} \geq 1
$$
holds, with equality due to the selection of $t_0$. As $n \to \infty$, we derive  
$$
\overline{\mathscr{E}}_{\vartheta, q_0}^P(f, \{x\}) \geq t_0 > h(q_0) = \overline{\mathscr{E}}_{\vartheta, q_0}^P(f, Y),
$$
which contradicts the monotonicity of $\overline{\mathscr{E}}_{\vartheta, q}^P(f, \cdot)$. Thus, we conclude that $\underline{\beta} \leq \inf_{x \in Y} \overline{\mathscr{E}}_{\vartheta}(f, x)$.

Similarly, suppose $\overline{\beta} < \sup_{x \in Y} \underline{\mathscr{E}}_{\vartheta}(f, x)$. Then, there exist $q_0 < 0$, $x \in Y$, and $\delta > 0$ such that  
$$
-\frac{h(q_0)}{q_0} < \underline{\mathscr{E}}_{\vartheta}(f, x) - \delta.
$$
Put $t_0 := -q_0(\underline{\mathscr{E}}_{\vartheta}(f, x) - \delta)$, we observe that $t_0 > h(q_0)$. Given that  
$$
\underline{\mathscr{E}}_{\vartheta}(f, x) = \lim_{\varepsilon \to 0} \liminf_{n \to \infty} -\frac{1}{n} \log \vartheta(B^{n}_{\varepsilon}(x)),
$$
there exist $\varepsilon > 0$ and $N \in \mathbb{N}$ such that for $n > N$,  
$$
-\frac{1}{n} \log \vartheta(B^{n}_{\varepsilon}(x)) \geq \underline{\mathscr{E}}_{\vartheta}(f, x) - \delta.
$$
Thus, for such $n$, the inequality  
$$
\vartheta(B^{n}_{\varepsilon}(x))^{q_0} e^{-n t_0} \geq 1
$$
holds. This implies $$\overline{\mathscr{E}}_{\vartheta, q_0}^P(f, \{x\}) \geq t_0 > h(q_0) = \overline{\mathscr{E}}_{\vartheta, q_0}^P(f, Y),$$ which contradicts the monotonicity of $\overline{\mathscr{E}}_{\vartheta, q}^P(f, \cdot)$. Hence, one has $\sup_x \underline{\mathscr{E}}_{\vartheta}(f, x) \leq \overline{\beta}$.
\end{proof}




We encapsulate the above results in the following statement. In particular, (3) and (4) in the next Theorem improve the results of \cite{FTEV}.
\begin{thm}
Let $(Y, f)$ be a TDS on a finite dimensional topological manifold $Y$ and $\vartheta$  be an invariant non-atomic Borel measure. Then there exist $\underline{\beta} $ and $ \overline{\beta}$ such that
\begin{enumerate}
\item $L_{\beta}=\emptyset,$ for $\beta \notin[\underline{\beta}, \overline{\beta}]$.
\item $\mathscr{E}_{top}^{P}\left(f, L_{\beta}\right) \leq \inf _{q}(q \beta+h(q))=h^{*}(\beta)$ for $\beta \in(\underline{\beta}, \overline{\beta})$,
where $h(q)=\overline{\mathscr{E}}_{\vartheta, q}^{P}(f, Y)$. 
\item If $\mathscr{B}_{\varepsilon, \vartheta}^{q,h(q)}(\supp(\vartheta))>0$ for all $\varepsilon>0$, then 
$$
\mathscr{E}_{top}^{P}\left(f, L_{\alpha,\beta}\right)\geq \mathscr{E}_{top}^{B}\left(f, L_{\alpha,\beta}\right)\geq \left\{\begin{array}{lcc}-h'_+(q)q+h(q) &\mbox{if}& q\geq 0,\\
-h'_-(q)q+h(q) &\mbox{if}& q\leq 0, \end{array}\right.
$$
where
$$
L_{\alpha,\beta}=\left\{x  \in Y \ \Big| \ \alpha\leq\underline{\mathscr{E}}_{\vartheta}(f, x)\leq\overline{\mathscr{E}}_{\vartheta}(f, x)\leq\beta\right\},
$$
$h'_+(q)=\lim\limits_{t\to 0^{+}}\frac{h(q+t)-h(q)}{t}$ and $h'_-(q)=\lim\limits_{t\to 0^{-}}\frac{h(q+t)-h(q)}{t}.$
\item If $\mathscr{B}_{\varepsilon, \vartheta}^{q,h(q)}(\supp(\vartheta))>0$ for all $\varepsilon>0$ and $h(q)$ is differentiable at $q$, then
$$
\mathscr{E}_{top}^{P}\left(f, L_{\beta}\right)= \mathscr{E}_{top}^{B}\left(f, L_{\beta}\right)= \inf _{q}(q \beta+h(q))=h^{*}(\beta)
$$
where $\beta=-h'(q).$

\end{enumerate}
\end{thm}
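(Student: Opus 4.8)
The plan is to obtain (1) and (2) with essentially no work and to concentrate everything on the lower bound in (3), from which (4) follows by a sandwich. Assertion (1) is exactly the Lemma immediately preceding the theorem. Assertion (2) is inequality (\ref{7.3}), together with the fact --- built into the definitions of $\underline{\beta}$ and $\overline{\beta}$ --- that $h^{*}(\beta)=\inf_{q}(q\beta+h(q))$ is finite precisely on $(\underline{\beta},\overline{\beta})$. Before attacking (3) it is convenient to record that $q\mapsto h(q)=\overline{\mathscr{E}}_{\vartheta,q}^{P}(f,Y)$ is convex (H\"{o}lder's inequality applied to the sums defining $\overline{\mathscr{P}}_{N,\varepsilon,\vartheta}^{q,t}$, then passage to the critical exponent and $\varepsilon\to0$), so the one-sided derivatives $h'_{\pm}(q)$ exist with $h'_{-}(q)\le h'_{+}(q)$; write $\alpha:=-h'_{+}(q)\le\beta:=-h'_{-}(q)$, and observe that $\alpha q+h(q)$ (for $q\ge0$) and $\beta q+h(q)$ (for $q\le0$) are exactly the two branches of the bound in (3), while if $h$ is differentiable at $q$ then $\alpha=\beta=-h'(q)$, $L_{\alpha,\beta}=L_{\beta}$, and $q\beta+h(q)=\inf_{q'}(q'\beta+h(q'))=h^{*}(\beta)$.

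For the lower bound in (3), fix $q$ and put $t=h(q)$. By the first Proposition of Section~\ref{Section2}, the hypothesis $\mathscr{B}_{\varepsilon,\vartheta}^{q,t}(\supp\vartheta)>0$ for all $\varepsilon>0$ forces $t\le\mathscr{E}_{\vartheta,q}^{B}(f,\supp\vartheta,\varepsilon)$, and Lemma~\ref{lemmm3.2222}(2) applied to the compact set $\supp\vartheta$ then yields, for each small $\varepsilon$, a compact $L_{\varepsilon}\subseteq\supp\vartheta$ with $\overline{\mathscr{B}}_{\varepsilon,\vartheta}^{q,t}(L_{\varepsilon})>0$. From such positivity a Frostman-type lemma for the $(q,\vartheta)$-Bowen Carath\'{e}odory construction --- a Besicovitch/Vitali selection argument, legitimate because $Y$ is a finite-dimensional manifold, together with Lemma~\ref{lem3.3} and the non-atomicity of $\vartheta$ --- produces a Borel probability measure $\mu_{\varepsilon}$ carried by a compact subset of $L_{\varepsilon}$ and a constant $c_{\varepsilon}>0$ such that $\mu_{\varepsilon}(B^{n}_{\varepsilon}(x))\le c_{\varepsilon}\,\Psi_{q}(\vartheta(B^{n}_{\varepsilon}(x)))\,e^{-nt}$ for all $x\in\supp\mu_{\varepsilon}$ and $n\in\N$. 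The heart of the proof is to show that (a suitable weak-$*$ limit of) the $\mu_{\varepsilon}$ is carried by $L_{\alpha,\beta}$: a Borel--Cantelli / density estimate built on the Frostman bound --- using convexity of $h$ to pin $-h'_{+}(q)$ and $-h'_{-}(q)$ as the sharp lower and upper thresholds for the $\mu_{\varepsilon}$-typical local $\vartheta$-entropy, and the entropy-doubling structure (when $q>0$) to reconcile the single working scale $\varepsilon$ with the all-scales definition of $\mathscr{E}_{\vartheta}(f,\cdot)$ --- gives $\mu_{\varepsilon}(L_{\alpha,\beta})>0$. Consequently $\mu_{\varepsilon}(L_{\alpha,\beta,M,N})>0$ for some $M,N$, where $L_{\alpha,\beta,M,N}$ are the uniformised level sets of Section~\ref{Section5}; on such a set $\Psi_{q}(\vartheta(B^{n}_{\varepsilon}(x)))=e^{-nq\,\vartheta_{\vartheta}^{n}(x,\varepsilon)}$ with $\vartheta_{\vartheta}^{n}(x,\varepsilon)\in(\alpha-\delta,\beta+\delta)$ for $n\ge N$, so $\mu_{\varepsilon}(B^{n}_{\varepsilon}(x))\le c_{\varepsilon}\,e^{-n(q\alpha+t-q\delta)}$ when $q\ge0$ and $\le c_{\varepsilon}\,e^{-n(q\beta+t+q\delta)}$ when $q\le0$. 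The mass distribution principle for Bowen topological entropy (the lower direction of the Carath\'{e}odory characterisation, \cite{FDJHW}), followed by monotonicity in the set and then $\delta\to0$, $\varepsilon\to0$, gives $\mathscr{E}_{top}^{B}(f,L_{\alpha,\beta})\ge q\alpha+h(q)$ (resp.\ $q\beta+h(q)$); and $\mathscr{E}_{top}^{P}(f,L_{\alpha,\beta})\ge\mathscr{E}_{top}^{B}(f,L_{\alpha,\beta})$ is the $q=0$ case of the second Proposition of Section~\ref{Section2}, which finishes (3).

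Assertion (4) is then formal: when $h$ is differentiable at $q$, $L_{\alpha,\beta}=L_{\beta}$ and $q\beta+h(q)=h^{*}(\beta)$ with $\beta=-h'(q)$, so (3) gives $\mathscr{E}_{top}^{P}(f,L_{\beta})\ge\mathscr{E}_{top}^{B}(f,L_{\beta})\ge h^{*}(\beta)$ while (2) gives $\mathscr{E}_{top}^{P}(f,L_{\beta})\le h^{*}(\beta)$, so all four quantities coincide with $h^{*}(\beta)$. I expect the whole difficulty to sit inside (3): constructing $\mu_{\varepsilon}$ with precisely the gauge $\Psi_{q}(\vartheta(\cdot))e^{-nt}$ (the covering--selection argument and its reliance on $\mathscr{M}_{0}(Y)$ through Lemma~\ref{lem3.3} when $q>0$), and --- above all --- localising $\mu_{\varepsilon}$ on $L_{\alpha,\beta}$: the step that turns differentiability of $h$ into the statement that $-h'_{\pm}(q)$ are the typical local entropies, and that must bridge the gap between the fixed scale $\varepsilon$ at which $\mu_{\varepsilon}$ lives and the limit over all scales defining $\mathscr{E}_{\vartheta}(f,\cdot)$.
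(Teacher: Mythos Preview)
Your handling of (1), (2) and (4) matches the paper's: (1) is the preceding lemma, (2) is inequality~(\ref{7.3}), and (4) follows from sandwiching (2) and (3) once $h$ is differentiable. The real divergence is in (3).

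The paper does \emph{not} build a Frostman measure. Instead it invokes \cite[Lemmas~6.1, 6.3]{FTEV} for the direct comparison
\[
\mathscr{B}^{\,-h'_{\pm}(q)q+h(q)-\delta}(L_{\alpha,\beta})\;\ge\;\mathscr{B}_{\varepsilon,\vartheta}^{\,q,h(q)}(L_{\alpha,\beta}),
\]
and then reduces (3) to showing $\mathscr{B}_{\varepsilon,\vartheta}^{q,h(q)}(L_{\alpha,\beta})>0$, i.e.\ $\mathscr{B}_{\varepsilon,\vartheta}^{q,h(q)}\bigl(\supp\vartheta\setminus L_{\alpha,\beta}\bigr)=0$. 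For the bad set $F_{\alpha}=\{\underline{\mathscr{E}}_{\vartheta}(f,x)<\alpha\}$ with $\alpha<-h'_{+}(q)$, convexity of $h$ supplies $t>0$ with $h(q+t)<h(q)-\alpha t$, hence $\mathscr{P}_{\varepsilon,\vartheta}^{\,q+t,\,h(q)-\alpha t}(\supp\vartheta)=0$; on $F_{\alpha}$ one has $\vartheta(B^{n}_{\varepsilon}(x))>e^{-n\alpha}$, so $\Psi_{q}(\vartheta(B^{n}_{\varepsilon}(x)))e^{-h(q)n}\le\Psi_{q+t}(\vartheta(B^{n}_{\varepsilon}(x)))e^{-(h(q)-\alpha t)n}$, and a Besicovitch extraction (this is where finite dimensionality enters) turns the centred covering into finitely many packings, giving $\mathscr{B}_{\varepsilon,\vartheta}^{q,h(q)}(F_{\alpha})\le\xi\,\mathscr{P}_{\varepsilon,\vartheta}^{\,q+t,\,h(q)-\alpha t}(F_{\alpha})=0$. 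The set $G_{\beta}$ is handled symmetrically.

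Your Frostman-plus-localisation route is the Olsen-style alternative and can be made to work, but it is heavier: you must first prove a $(q,\vartheta)$-Frostman lemma in this dynamical setting, and then carry out the localisation of $\mu_{\varepsilon}$ on $L_{\alpha,\beta}$ that you yourself flag as the hard step. The paper's parameter-shift argument bypasses both: it never leaves the $(q,\vartheta)$-Carath\'eodory measures, the convexity of $h$ is used only to pick the shift $t$, and --- worth noting --- no entropy-doubling hypothesis is needed anywhere (your invocation of Lemma~\ref{lem3.3} and $\mathscr{M}_{0}(Y)$ for $q>0$ is an artefact of the Frostman route, not of the theorem). What your approach buys is self-containment: it does not lean on the cited \cite{FTEV} comparison lemmas.
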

\begin{proof}
We are left to show (3).
According to \cite[Lemma 6.1, Lemma 6.3]{FTEV} we have, for all $\delta>0$,
$$\begin{cases}
\mathscr{B}^{-h'_+(q)q+h(q)-\delta}(L_{\alpha,\beta})\geq \mathscr{B}_{\varepsilon, \vartheta}^{q, h(q)}(L_{\alpha,\beta})& \text{   if \ $q\geq 0 $}\\\\
\mathscr{B}^{-h'_-(q)q+h(q)-\delta}(L_{\alpha,\beta})\geq \mathscr{B}_{\varepsilon, \vartheta}^{q, h(q)}(L_{\alpha,\beta}) & \text{   if \  $q\leq 0.$}
\end{cases}$$
To demonstrate this theorem it is sufficient to prove that $\mathscr{B}_{\varepsilon, \vartheta}^{q, h(q)}(L_{\alpha,\beta})>0$. \\ Since  $\mathscr{B}_{\varepsilon, \vartheta}^{q, h(q)}(\supp\vartheta)>0 $, then it is sufficient to show that $\mathscr{B}_{\varepsilon, \vartheta}^{q, h(q)}(L_{\alpha,\beta}^{c})=0$,
where
\begin{align*}
L_{\alpha,\beta}^{c}&=\supp\vartheta\setminus L_{\alpha,\beta}\\
&=\Big(\supp\vartheta\setminus
\left\{x  \in Y \ \Big| \ \alpha\leq\underline{\mathscr{E}}_{\vartheta}(f, x)\right\}\Big)\\
&\cup\Big(\supp\vartheta\setminus\left\{x  \in Y \ \Big| \ \overline{\mathscr{E}}_{\vartheta}(f, x)\leq\beta\right\}\Big).
\end{align*}
For  ${\alpha,\beta}>0$,  we take  $F_\alpha=\supp\vartheta\setminus
\left\{x  \in Y \ \Big| \ \alpha\leq \underline{\mathscr{E}}_{\vartheta}(f, x)\right\}$ and $G_\beta=\supp\vartheta\setminus\left\{x  \in Y \ \Big| \ \overline{\mathscr{E}}_{\vartheta}(f, x)\leq\beta\right\}$. Let us now show that
$$\mathscr{B}_{\varepsilon, \vartheta}^{q, h(q)}(F_\alpha)=0, \  \text{ for all } \ \alpha<-h'_+(q) \   \text{ and }    \   \mathscr{B}_{\varepsilon, \vartheta}^{q, h(q)}(G_\beta)=0,    \     \text{ for all } \ \beta> -h'_-(q).$$
If $\alpha<-h'_+(q)=-\lim\limits_{t\to 0^{+}}\frac{h(q+t)-h(q)}{t}$, then there is a real $t>0$ such that
$\frac{h(q+t)-h(q)}{-t}>\alpha$ and $h(q+t)<h(q)-\alpha t.$
This implies that $$\mathscr{P}_{\varepsilon, \vartheta}^{q+t,h(q)-\alpha t}(\supp\vartheta)=0.$$
Consider a given $\varepsilon_0 > 0$. For all $x\in F_\alpha$, and every $\varepsilon < \varepsilon_0$, there is a $N_x\in \N$ such that for all $n\geq N_x,$ $\vartheta (B^{n}_{\varepsilon}(x)) > e^{-n\alpha}$ holds. Notably, the collection $(B^{n}_{\varepsilon}(x))_{x\in F_{\alpha}}$ forms a centered $(N_x, \varepsilon)$-covering of the set $F_{\alpha}$.
Now, let us assume $F$ is a subset of $F_\alpha$. As per the Besicovitch covering theorem, we can extract $\xi$-subfinite families $\lbrace (\overline{B}^{n_{ij}}_{\varepsilon}(x_{ij}))_j,1\leq i\leq \xi\rbrace $ satisfying the following conditions:
For each $i$, the collection $(\overline{B}^{n_{ij}}_{\varepsilon}(x_{ij}))_j$ forms a $(N_i, \varepsilon)$-packing of the set $F$, and $F$ is encompassed by the union of the collections $\bigcup\limits_{i=1}^{\xi}\bigcup\limits_{j}\overline{B}^{n_{ij}}_{\varepsilon}(x_{ij})$. This situation gives rise to the following expression:
\begin{align*}
\vartheta\left(\overline{B}^{n}_{\varepsilon}(x)\right)^{q}e^{-h(q) n}
&=\vartheta\left(\overline{B}^{n}_{\varepsilon}(x)\right)^{q+t}\vartheta\left(\overline{B}^{n}_{\varepsilon}(x)\right)^{-t}e^{-h(q)n}\\
&\leq \vartheta\left(\overline{B}^{n}_{\varepsilon}(x)\right)^{q+t}e^{-(h(q)-\alpha t) n}.
\end{align*}
This implies that for $N=\min \{N_i \ \vert \  i=1, \cdots, \xi\},$
\begin{align*}
\overline{\mathscr{B}}_{N, \varepsilon, \vartheta}^{q, h(q)}(F)&\leq\sum\limits_{i=1}^{\xi}\sum\limits_{j}\Psi_{q}\left(\vartheta\left(\overline{B}^{n_{ij}}_{\varepsilon}(x_{ij})\right)\right)e^{-h(q) n_{ij}}\\
&\leq\sum\limits_{i=1}^{\xi}\sum\limits_{j}\Psi_{q+t}\left(\vartheta\left(\overline{B}^{n_{ij}}_{\varepsilon}(x_{ij})\right)\right)e^{-(h(q)-\alpha t) n_{ij}}\\
&\leq \sum\limits_{i=1}^{\xi}\overline{\mathscr{P}}_{N, \varepsilon, \vartheta}^{q+t,h(q)-\alpha t}(F)\\
&=\xi\overline{\mathscr{P}}_{N, \varepsilon, \vartheta}^{q+t,h(q)-\alpha t}(F).
\end{align*}
As the value of $N$ approaches $\infty$, we attain
$$\overline{\mathscr{B}}_{\varepsilon, \vartheta}^{q, h(q)}(F)\leq\xi\overline{\mathscr{P}}_{\varepsilon, \vartheta}^{q+t,h(q)-\alpha t}(F)\leq\xi\overline{\mathscr{P}}_{\varepsilon, \vartheta}^{q+t,h(q)-\alpha t}(F_\alpha).$$
Taking the supremum over  $F\subseteq F_\alpha$, we get
$$\mathscr{B}_{\varepsilon, \vartheta}^{q, h(q)}(F_\alpha)\leq\xi\overline{\mathscr{P}}_{\varepsilon, \vartheta}^{q+t,h(q)-\alpha t}(F_\alpha).$$
Presently, consider $(F_i)_i$ as a covering for $F_\alpha$, then
\begin{align*}
\mathscr{B}_{\varepsilon, \vartheta}^{q, h(q)}(F_\alpha)&\leq\sum\limits_{i}\mathscr{B}_{\varepsilon, \vartheta}^{q, h(q)}(F_i)\\
&\leq\sum\limits_{i}\xi\overline{\mathscr{P}}_{\varepsilon, \vartheta}^{q+t,h(q)-\alpha t}(F_i)\\
&=\xi\sum\limits_{i}\overline{\mathscr{P}}_{\varepsilon, \vartheta}^{q+t,h(q)-\alpha t}(F_i).
\end{align*}
Because $(F_i)_i$ represents any arbitrary covering of $F_\alpha$, therefore
$$\mathscr{B}_{\varepsilon, \vartheta}^{q, h(q)}(F_\alpha)\leq\xi\mathscr{P}_{\varepsilon, \vartheta}^{q+t,h(q)-\alpha t}(F_\alpha)=0.$$
Finally, we conclude that
$$\mathscr{B}_{\varepsilon, \vartheta}^{q, h(q)}(F_{\alpha})=0,    \   \text{ for all } \ \alpha<-h'_+(q).$$  
The proof of $\mathscr{B}_{\varepsilon, \vartheta}^{q, h(q)}(G_\beta)=0,    \     \text{ for all } \ \beta> -h'_-(q)$ is very similar. 

\end{proof}

\section{Examples} 
In this section, we present two examples in smooth ergodic theory. 
\subsection{Anosov toral automorphism}  

The SRB measure $\vartheta$ of any Anosov toral automorphism $(\chi,\mathbb{T}^2)$ (including Arnold-Thom cat map) is $\chi$-homogeneous, i.e., for any $\varepsilon > 0$, there exist $\delta > 0$ and $c > 0$ such that  
$$  
\vartheta\left(B^{n}_{\delta}(y)\right) \leq c~ \vartheta\left(B^{n}_{\varepsilon}(x)\right)  
$$  
for all integers $n \geq 1$ and all points $x, y \in \mathbb{T}^2$. Combined with the fact that the SRB measure is also a measure of maximal entropy, we have that 
$\overline{\mathscr{E}}_{\vartheta}(\chi, x) = h_{\vartheta}(\chi) = h_{\mathrm{top}}(\chi)$  for all $x \in \mathbb{T}^2$. Consequently, the multifractal spectrum of local entropies for a homogeneous measure becomes trivial:  
$$  
h_{\mathrm{top}}\left(\chi, L_{\beta}\right) =  
\begin{cases}  
h_{\mathrm{top}}(\chi) & \text{if } \beta= h_{\mathrm{top}}(\chi), \\  
0 & \text{otherwise.}  
\end{cases}  
$$  
One can show that
$$
H_{\vartheta}(\chi, q)=\lim _{\varepsilon \rightarrow 0} \liminf_{n \rightarrow \infty}\frac{1}{(1-q) n} \log \int \vartheta\left(B^{n}_{\varepsilon}(x)\right)^{q-1} d \mu
= h_{\mathrm{top}}(\chi). $$ 
Notice that the interval $(\underline{\beta}, \overline{\beta})$ is empty and hence the multifractal formalism holds trivially.  

\subsection{Gibbs measures for Anosov diffeomorphisms}  
First, we borrow a Lemma from \cite{TV}.

\begin{lem} \label{gibb}
For any Anosov diffeomorphisms $(\chi,Y)$ and H$\ddot{o}$lder continuous function $\psi$, if $\vartheta$ is the corresponding Gibbs measure. Then, 
$$  
(1 - q) H_{\vartheta}(\chi, q) = P(q \psi ) - q P(\psi ).
$$  

Furthermore, the function $(1 - q) H_{\vartheta}(\chi, q)$ is convex and continuously differentiable. It is strictly convex if $\vartheta$ is not a measure of maximal entropy.  
\end{lem}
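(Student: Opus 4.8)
The plan is to reduce the identity to the thermodynamic formalism of the subshift of finite type attached to a Markov partition of $(\chi,Y)$. First I would invoke the geometric Gibbs property of the equilibrium state $\vartheta$ of the H\"older potential $\psi$: for every sufficiently small $\varepsilon>0$ there is a constant $C_\varepsilon\ge 1$ such that
$$
C_\varepsilon^{-1}\,e^{-nP(\psi)+S_n\psi(x)}\ \le\ \vartheta\!\left(B^n_\varepsilon(x)\right)\ \le\ C_\varepsilon\,e^{-nP(\psi)+S_n\psi(x)}
$$
for all $n\ge 1$ and all $x\in Y$, where $S_n\psi(x)=\sum_{i=0}^{n-1}\psi\!\left(\chi^i(x)\right)$. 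This is classical for Anosov diffeomorphisms and follows from the existence of Markov partitions, Bowen's bounded-variation estimate for H\"older potentials, and the Ruelle--Perron--Frobenius theorem.

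Next I would fix a Markov partition $\mathcal R$ of small diameter and set $\mathcal R_n=\bigvee_{i=0}^{n-1}\chi^{-i}\mathcal R$. For small $\varepsilon$ each Bowen ball $B^n_\varepsilon(x)$ meets only a bounded number of cells of $\mathcal R_n$, while each cell of $\mathcal R_n$ is contained in some $B^n_{\varepsilon_0}(\cdot)$; together with the Gibbs bound above and the bounded variation of $S_n\psi$ along cells of $\mathcal R_n$, I would deduce
$$
\int \vartheta\!\left(B^n_\varepsilon(x)\right)^{q-1}d\vartheta(x)\ \asymp\ e^{-qnP(\psi)}\sum_{R\in\mathcal R_n}e^{q\,S_n\psi(x_R)},
$$
with implied constants independent of $n$ and with $x_R\in R$ arbitrary. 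The sum on the right is exactly the partition function computing the topological pressure of $q\psi$, so $\tfrac1n\log\sum_{R\in\mathcal R_n}e^{q S_n\psi(x_R)}\to P(q\psi)$; taking $\tfrac1n\log(\cdot)$, then $\liminf_{n\to\infty}$ (which here is a genuine limit), and finally $\varepsilon\to0$ yields $(1-q)H_\vartheta(\chi,q)=P(q\psi)-qP(\psi)$.

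It then remains to study $g(q):=P(q\psi)-qP(\psi)$. Convexity is immediate: $q\mapsto P(q\psi)$ is convex because topological pressure is convex in the potential and $q\mapsto q\psi$ is affine, and $q\mapsto -qP(\psi)$ is affine. For regularity I would use that $q\mapsto P(q\psi)$ is real-analytic (analytic dependence of the leading eigenvalue of the transfer operator on the H\"older potential), so $g$ is real-analytic, in particular continuously differentiable, with $g''(q)=\frac{d^2}{dq^2}P(q\psi)=\sigma^2_{\mu_{q\psi}}(\psi)$, the asymptotic variance of $\psi$ with respect to the equilibrium state $\mu_{q\psi}$ of $q\psi$. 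This variance vanishes for one (equivalently every) $q$ if and only if $\psi$ is cohomologous to a constant; by uniqueness of equilibrium states, this in turn is equivalent to the Gibbs measure $\vartheta$ of $\psi$ coinciding with the Gibbs measure of the zero potential, that is, to $\vartheta$ being the measure of maximal entropy. Hence $g$ is strictly convex precisely when $\vartheta$ is not a measure of maximal entropy.

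The main obstacle will be the second step: establishing the comparison between $\int\vartheta\!\left(B^n_\varepsilon(x)\right)^{q-1}d\vartheta$ and the symbolic partition function $\sum_{R\in\mathcal R_n}e^{qS_n\psi(x_R)}$ with multiplicative constants uniform in $n$. This requires reconciling the geometry of Bowen balls with that of Markov-partition cells and controlling the oscillation of $S_n\psi$ on those cells uniformly in $n$; once this bookkeeping is carried out, the pressure identity and all the convexity and regularity assertions are standard consequences of the thermodynamic formalism for Anosov diffeomorphisms.
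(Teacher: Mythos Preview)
The paper does not give its own proof of this lemma; it is explicitly stated as ``borrowed'' from \cite{TV} (Takens--Verbitski), so there is no in-paper argument to compare against. Your sketch is correct and is essentially the standard derivation one finds in that reference: the Gibbs bound $\vartheta(B^n_\varepsilon(x))\asymp e^{-nP(\psi)+S_n\psi(x)}$ reduces the R\'enyi integral to $\sum_{R\in\mathcal R_n}\vartheta(R)^q\asymp e^{-qnP(\psi)}\sum_R e^{qS_n\psi(x_R)}$, whose exponential growth rate is $P(q\psi)$, and the convexity/analyticity/strict-convexity claims are exactly the Ruelle theory of pressure and asymptotic variance that you invoke. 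The bookkeeping step you flag (uniform comparison of Bowen balls with Markov cylinders via expansivity and bounded distortion) is genuinely the place where care is needed, but it is routine for Anosov systems and is handled in \cite{TV,Bo}; your identification of it as the main obstacle is accurate rather than a gap.
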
 
By a similar argument as in \cite{FTEV}, we have that $$h_{\text {top }}\left(\chi, L_{\beta(q)}\right)=T^{*}(\beta(q))\;\; \text{for all} \;\; q, \;\;\text{where}\;\; T^{*}(\beta(q))=\inf_q\{q\beta+(1-q)H_{\vartheta}(\chi, q)\}.$$ Therefore, a complete description of the multifractal spectrum of local entropies for an Anosov diffeomorphism is obtained if we apply the following fact from \cite{S}:

There exists an interval $[\underline{\beta}, \bar{\beta}]$ such that $L_{\beta}=\emptyset\text { for } \beta\notin[\underline{\beta}, \bar{\beta}]$ and for every $\beta\in(\underline{\beta}, \bar{\beta})$ there exists $q \in \mathbb{R}$ such that $\beta=\beta(q)$.

\bigskip\bigskip\bigskip
{\noindent{\bf Funding:} This work is supported by NSFC (No.12271432), Xi'an International Science and Technology Cooperation Base-Ergodic Theory and Dynamical Systems, and by Analysis, Probability \& Fractals Laboratory (No. LR18ES17).\\

\noindent{\bf Declaration of competing interest:}
We confirm that this manuscript has not been published elsewhere
and is not being considered by another journal. We approved the
manuscript and agree with submission to the journal, and we have no conflicts of interest to declare. \\

\noindent{\bf Data availability:}
No data was used for the research described in the article.}


\begin{thebibliography}{99}
\labelsep=1em\relax

\bibitem{Achour2024a} R. Achour, Z. Li, B. Selmi \& T Wang. { A multifractal formalism for new general fractal measures.}  Chaos, Solitons \& Fractals, {\bf181} (2024), 114655.

\bibitem{AP} M. Arbeiter \& N. Patzschke. {\it Random self-similar multifractals.} {Math. Nachr.,} {\bf 181} (1996), 5-42.

\bibitem{NB2}  N.~Attia \& B.~Selmi.
{\it A multifractal formalism for Hewitt-Stromberg measures.} { J.
Geom. Anal.} {\bf 31} (2021), 825-862. Correction: {\bf32} (2022), article number 310.

\bibitem{BeN} F. Ben Nasr. {\it About some regularity conditions of Borel measures.} preprint (1995).

\bibitem{Ben} F.~Ben Nasr, I.~Bhouri \& Y.~Heurteaux. {\it The
  validity of the multifractal formalism: results and
  examples.} {Adv.~Math.,} {\bf 165}(2) (2002), 264-284.

  \bibitem{Ben2P} F.~Ben Nasr \& J.~Peyri{\`e}re. {\it Revisiting the
  multifractal analysis of measures.} {Revista Matem{\'a}tica
    Iberoamericana,} {\bf 29}(1) (2013), 315-328.

\bibitem{Bo} R. Bowen. {\it Some systems with unique equilibrium states.} Math. Systems Theory, {\bf 8},
(1975), 193-202.
\bibitem{BMP} G. Brown, G. Michon \& J. Peyri\`ere. {\it On the multifractal analysis of measures.} J. Stat. Phys., {\bf 66}, (1992) 775-790.

\bibitem{LBYP} L. Barreira, Ya. Pesin \& J. Schmeling. {\it On a general concept of multifractality: multifractal spectra for dimensions, entropies, and Lyapunov exponents.} Multifractal rigidity, Chaos, {\bf7}
 (1997), 27-38.

\bibitem{CM} R. Cawley \& R. D. Mauldin. {\it Multifractal decomposition of Moran fractals.} Advances in Mathematics, {\bf 92} (1992), 196-236.

\bibitem{Da1} M. Das. {\it Packings and pseudo-packings: measures, dimensions and mutual singularity.} preprint (1996).

\bibitem{Da2} M. Das.  {\it Local properties of self-similar measures.} Illinois J. Math., {\bf 42} (1998), 313-332.

\bibitem{BD22}  Z.~Douzi \& B.~Selmi. {\it On the mutual singularity of Hewitt-Stromberg measures
for which the multifractal functions do not necessarily coincide.}
{ Ricerche di Matematica,} {\bf 72} (2023), 1-32.

\bibitem{ZDBSHZ} Z. Douzi, B. Selmi \& H. Zyoudi. {\it The measurability of Hewitt-Stromberg measures and dimensions.} Communications of the Korean Mathematical Society, vol. 38, no. 2, pp. 491-507, (2023).


\bibitem{EM} G. A. Edgar \& R. D. Mauldin. {\it Multifractal decompositions of digraph recursive fractals.} Proc. London Math. Soc., {\bf 65} (1992), 604-628.


\bibitem{En} R. Engelking. {\it General Topology, Revised and completed edition.} Heldermann Verlag Berlin, Berlin 1989.

\bibitem{KFF} K. Falconer. {\it Fractal Geometry.} Wiley, Chichester, 1990.

\bibitem{FDJHW} D.-J. Feng $\&$ W. Huang. {\it Variational principles for topological entropies of subsets.} {J. Funct. Anal.,} {\bf263}(2012), no.8, 2228-2254.

\bibitem{UG} U.~Frish \& G.~Parisi.  {\it On the singularity structure of fully developed turbulence,} appendix to U.~Frisch, \textsl{Fully developed turbulence and intermittency.} In: Turbulence and Predictability in Geophysical Fluid
Dynamics and Climate Dynamics. Proc. Int. Sch. Phys. Enrico Fermi, course 88, pp. 84-88. North-
Holland, Amsterdam (1985).



\bibitem{GSS} S. Gentili. {\it Measure, integration and a primer on probability theory.} Vol. 1, Springer, Cham, (2020), 463 pp.




\bibitem{HJKPS}  T.C.~Halsey, M.H.~Jensen, L.P.~Kadanoff, I.~Procaccia \& B.J.~Shraiman. 
 {\it Fractal measures and their singularities: The
characterization of strange sets.}  { Phys. Rev. A,} {\bf 33}(2) (1986),
1141-1151.

\bibitem{HLW} L.~Huang, Q.~Liu \& G.~Wang. {\it Multifractal analysis of Bernoulli measures on a class of homogeneous Cantor
sets}. { J. Math. Anal. Appl.,} {\bf 491} (2020), article number 124362.

\bibitem{Ka}A. Katok \& B. Has selblatt. {\it Introduction to the Modern Theory of Dynamical
Systems.} Encyclopedia Math. Appl., {\bf 54}, Cambridge Univ. Press, Cambridge, 1995.

\bibitem{LN} K.-S. Lau \& S.-M. Ngai. {\it Multifractal measures and a weak separation condition.} preprint (1994).

\bibitem{LV} J. Levy Vehel \& R. Vojak. {\it Multifractal analysis of Choquet capacities: Prelimenary results.} Advances in Applied Mathematics, to appear.


\bibitem{LiSe} Z.~Li \& B.~Selmi. {\it On the multifractal analysis of measures in a probability
space}.  { Illinois Journal of Mathematics,} {\bf 65} (2021), 687-718.


\bibitem{man1}  B.~Mandelbrot.  { \it Les Objects fractales: forme, hasard et
Dimension.} Flammarion, (1975).

\bibitem{man2}  B.~Mandelbrot.  {\it The Fractal Geometry of Nature.} W. H. Freemam, (1982).

\bibitem{MR} B. Mandelbrot \& R. Riedi. {\it Multifractal formalism for infinite multinomial measures.} preprint (1994).

\bibitem{MM} P. Mattila \& R. D. Mauldin. {\it Measure and dimension functions: measurability and densities.} Math. Proc. Camb. Phil. Soc., {\bf 121} (1997), 81-100. 

\bibitem{Ol1} L. Olsen. {\it A multifractal formalism.} Advances in Mathematics, {\bf 116} (1995), 82-196.

\bibitem{OLMEA} L. Olsen.
{\it Measurability of multifractal measure functions and multifractal dimension functions.}
{Hiroshima Math. J.,} {\bf 29}(1999), no.3, 435-458.

\bibitem{Ol2} L. Olsen. {\it Random Geometrically Graph Directed Self-Similar Multifractals, Pitman Research Notes in Mathematics Series.} Vol. 307, Longman Scientific \& Technical (1994).

\bibitem{O13} L. Olsen. {\it Self-affine multifractal Sierpinski sponges in $X$.} Pacific Journal of Mathematics, {\bf 183} (1998), 143-199.

\bibitem{O14} L. Olsen. {\it Multifractal dimensions of product measures.} Math. Proc. Cambridge Phil. Soc., {\bf120} (1996), 709-734.




\bibitem{Pes} Y. Pesin. {\it Generalized spectrum for the dimension: the approach based on Carathéodory's construction, Constantin Carathéodory: an international tribute.} pp. 1108-1119, World Sci. Publishing, Teaneck (1991).

\bibitem{Pey} J. Peyriére. {\it Multifractal measures, Proceedings of the NATO Advanced Study Institute on Probablistic and Stochastic Methods in Analysis with Applications.} Il Ciocco, pp. 175-186, NATO ASI Series, Series C: Mathematical and Physical Sciences, Vol 372, Kluwer Academic Press, Dordrecht, 1992.

\bibitem{Ra} D. Rand. {\it The singularity spectrum $f(\beta)$ for cookie-cutters.} Ergodic Theory and Dynamical Systems, {\bf 9} (1989), 527-541.

\bibitem{Samti} A.~Samti. {\it Multifractal formalism of an inhomogeneous multinomial measure with various parameters}. {Extracta Mathematicae,}
{\bf35}  (2020), 229-252.

\bibitem{S}J. Schmeling. {\it On the completeness of multifractal spectra}. preprint WIAS,
Berlin, 1996

\bibitem{SBSB33} B.~Selmi. {\it The relative multifractal analysis, review and examples}.  { Acta Scientiarum
Mathematicarum,} {\bf86} (2020), 635-666.

\bibitem{SBBS}  B.~Selmi. {\it A review on multifractal analysis of Hewitt-Stromberg measures}. {J. Geom.
Anal.,} {\bf 32} (2022), article number 12, pages 1-44.




\bibitem{BSHZ} B. Selmi \& H. Zyoudi. {\it The smoothness of multifractal Hewitt-Stromberg and box dimensions.} J. Nonlinear Funct. Anal., 2024 (2024) 11.


\bibitem{BSHZ1} B. Selmi \& H. Zyoudi.
{\it Regarding the set-theoretic complexity of the general fractal dimensions and measures maps.}  Analysis, (2024) https://doi.org/10.1515/anly-2024-0087.

\bibitem{She} S.~Shen. {\it Multifractal analysis of some inhomogeneous multinomial
measures with distinct analytic Olsen's $b$ and $B$ functions}. { J.
Stat. Phys.,} {\bf 159} (2015), 1216-1235.




\bibitem{FTEV} F. Takens \& E. Verbitski.
{\it General multifractal analysis of local entropies.}
{Fund. Math.,} {\bf 165}(2000), no.3, 203-237.

\bibitem{TV} F. Takens \& E. Verbitski. {\it Multifractal analysis of local entropies for expansive homeomorphisms
with speciﬁcation.} {Comm. Math. Phys.,} {\bf203}, (1999), 593-612.

\bibitem{Ta} S. J. Taylor. {\it The fractal analysis of Borel measures in $X$.} Journal of Fourier Analysis and Applications, to appear.


\bibitem{Wa} T. Ward. {\it Entropy of Compact Group Automorphisms.} lecture notes, 1994.

\bibitem{W1} M.~Wu. {\it The singularity spectrum $f(\alpha)$ of some Moran
fractals}. { Monatsh Math.,} {\bf 144} (2005), 141-55.

\bibitem{W3} M.~Wu \& J.~Xiao. {\it The singularity spectrum of some non-regularity Moran
fractals}. { Chaos, Solitons $\&$ Fractals,} {\bf 44} (2011), 548-557.

\bibitem{W4} J.~Xiao \& M.~Wu. {\it The multifractal dimension functions of homogeneous Moran measure}. { Fractals,} {\bf16} (2008), 175-185.

\bibitem{YZ} Z.~Yuan. {\it Multifractal spectra of Moran measures without local
dimension}. { Nonlinearity,} {\bf 32} (2019),  5060-5086.

\end{thebibliography}
\end{document}